\newcommand{\Var}{{\operatorname{Var}}}
\newcommand{\Cov}{{\operatorname{Cov}}}
\newcommand{\bbr}{\mathbb{R}}
\newcommand{\bbz}{\mathbb{Z}}
\newcommand{\bbn}{\mathbb{N}}
\newcommand{\bbe}{\mathbb{E}}
\newcommand{\bbp}{\mathbb{P}}
\newcommand{\ca}{\mathcal{A}}
\newcommand{\bfZ}{\mathbf{Z}}
\newcommand{\abs}[1]{\left| #1 \right|}
\newcommand{\norm}[1]{\left\| #1 \right\|}
\newcommand{\indicator}[1]{1_{\{ #1 \}}}
\newcommand{\brackets}[1]{\left( #1 \right)}
\newcommand{\floor}[1]{\lfloor #1 \rfloor}
\newcommand{\ignore}[1]{}
\theoremstyle{plain}
\newtheorem{theorem}{Theorem}[section]
\newtheorem{corollary}[theorem]{Corollary}
\newtheorem{lemma}[theorem]{Lemma}
\newtheorem{proposition}[theorem]{Proposition}
\newtheorem{assumption}[theorem]{Assumption}
\theoremstyle{definition}
\newtheorem{definition}[theorem]{Definition}
\theoremstyle{remark}
\newtheorem{remark}[theorem]{Remark}
\newcommand*{\defeq}{\mathrel{\rlap{%
                     \raisebox{0.3ex}{$\m@th\cdot$}}%
                     \raisebox{-0.3ex}{$\m@th\cdot$}}%
                     =}
\newcommand*{\eqdef}{=\mathrel{\rlap{%
                     \raisebox{0.3ex}{$\m@th\cdot$}}%
                     \raisebox{-0.3ex}{$\m@th\cdot$}}%
                     }
\small\color{black},
\ttfamily\color{gray},
\ttfamily\color{gray}\footnotesize,
\begin{document}
\title[]
{Limit Theorems for the Symbolic Correlation Integral and the R\'enyi-2 Entropy under Short-range Dependence} 
\author[A. Schnurr]{Alexander Schnurr}
\author[A. Silbernagel]{Angelika Silbernagel}
\author[M. Ruiz Marin]{Manuel Ruiz Mar\'in}
\thanks{Research supported by DFG project SCHN 1231/3-2: \emph{Ordinal-Pattern-Dependence: Grenzwerts\"atze und Strukturbr\"uche im langzeitabh\"angigen Fall mit Anwendungen in Hydrologie, Medizin und Finanzmathematik}. Manuel Ruiz Marín is grateful to the support of a grant from the Spanish Ministry of Science,  Innovation and Universities PID2022-136252NB-I00 founded by MICIU/AEI/10.13039/501100011033 and by the European Regional Development Fund (FEDER, EU)}
\today

\address{A.S. and A.S., Department Mathematik, Universit\"at Siegen, 57068 Siegen, Germany}
\email{schnurr@mathematik.uni-siegen.de}

\address{M.R.M., Universidad Politécnica de Cartagena,
Calle Real 3, 30201, Cartagena, Spain}

\keywords{ordinal patterns, time series, Renyi entropy, limit theorems}
\subjclass[2020]{62M10, 37A35, 60F05, 37M10}

\begin{abstract}
The symbolic correlation integral provides a way to measure the complexity of time series and dynamical systems. In the present article we prove limit results for an estimator of this quantity under the assumption of short-range dependence. Our approach is based on U-statistics. On our way we slightly generalize classical limit results in the framework of 1-approximating functionals. Furthermore, we carefully analyze the limit variance which appears in our central limit theorem.  A simulation study with ARMA and ARCH time series as well as a real world data example are also provided. In the latter we show how our method could be used to analyze EEG data in the context of epileptic seizures. 
\end{abstract}
\maketitle

\begin{center}
    This article is dedicated to the memory of Karsten Keller who initiated this research and whose ideas and comments have been a constant source of inspiration.
\end{center}

 \section{Background and Introduction}\label{backintro}

In this article we use quantities which are based on so called ordinal patterns in order to analyze the complexity of data sets and time series. 
These patterns describe the relative position of the values within a vector: Given $x=(x_1,...,x_d)\in \bbr^d$ with mutually different values, its ordinal pattern is $\pi=(\pi_1,...,\pi_d)\in \bbn^d$ where $\pi_j$ is the $j$-th rank within the values of $x$. 
\begin{figure}[h!]
    \centering
    \begin{tikzpicture}[scale=0.5]
        \draw[gray, thick] (1,3.4) -- (2,3) -- (3,2) -- (4,3.7) --(5,2.8);
        \filldraw[black] (1,3.4) circle (1.5pt);
        \filldraw[black] (2,3) circle (1.5pt);
        \filldraw[black] (3,2) circle (1.5pt);
        \filldraw[black] (4,3.7) circle (1.5pt);
        \filldraw[black] (5,2.8) circle (1.5pt);

        \draw[->] (0, 1.5) -- (0, 4.5);
        \foreach \x in {4, 5, 6}{
            \draw (0, \x-2) -- (-0.2, \x-2);
            \node at (-0.7, \x-2) {$\x$};
        };

        \draw[->, ultra thick] (6, 3) -- (8, 3);

        \node at (12, 3) {$\pi = (4, 3, 1, 5, 2)$};
    \end{tikzpicture}
    \caption{Ordinal pattern of $x = (5.3, 5.0, 4.0, 5.7, 4.8)$.}
    \label{fig: OP representations}
\end{figure}
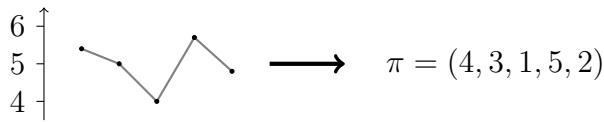

Ordinal patterns have many desirable properties like invariance under monotonic transformations, robustness with respect to small noise and simplicity in computation. They have been used in the context of dynamical systems as well as time series analysis with applications in different fields from biomedicine to econophysics (see e.g. review papers \citealp{ZaninEtAl2012}, \citealp{AmigoEtAl2015}, \citealp{sil_schn_24b}). 

The reason for introducing ordinal patterns by \cite{bandt2002_1} was to give a simple and robust measure for quantifying complexity of given data sets and the systems behind them, resulting in the concept of permutation entropy. The latter is the Shannon entropy of the ordinal pattern distribution for some $d$ underlying the data or system. For a time series  one considers ordinal pattern probabilities and for data sets one counts ordinal patterns in order to obtain relative frequencies of ordinal patterns as canonical estimates of the probabilities (under some stationarity conditions).

For large $d$, the permutation entropy is strongly related to the Kolmogorov-Sinai entropy, being a central quantity in nonlinear dynamical system theory. This fact, which was first discovered by \cite{bandt2002_1}, is important for understanding the nature of permutation entropy and might be a further reason for its success besides its good performance in data analysis. The relationship can be interpreted in such a way that long ordinal patterns contain much information of a time series and the system behind. 

Meanwhile there exist different variants of the permutation entropy (see \citealp{Keller2017}).
In the present paper we will particularly consider a variant based on R\'enyi-$q$ entropy instead of Shannon entropy: We consider (for $q \geq 0, q \neq 1$)
\begin{eqnarray*}
        R_q = R_q((p_\pi)_{\pi\in S_d})=-\log \sum_{\pi \in S_d}p_\pi^q
\end{eqnarray*}
where we sum the probabilities to the power $q$ of all possible patterns of order $d$. The quantity $R_q$ is called R\'enyi-$q$ permutation entropy (or RPE for short). 
To the best of our knowledge, RPE  has been first considered by \cite{zha_sha_hua_13} in the context of a complexity measure which puts more weight on either rare or frequent events. A first systematic study of the performance of RPE  has been conducted by \cite{liangetal_15} with regard to monitoring the depth of anaesthesia in EEG. There, the authors compared 12 entropy measures (including the classical Shannon permutation entropy as well as R\'enyi and Tsallis permutation entropy) concluding that the RPE outperformed all competitors. \cite{mammoneetal2015} investigated the RPE in the context of childhood absence epilepsy using EEG data. Their results indicate a better performance of RPE (with suitable parameters) in discriminating interictal (seizure free) from ictal (seizure) states if compared to Shannon permutation entropy. More recently, \cite{gut_kel_22} have investigated the asymptotics of RPE depending on the parameter $q$. More precisely, they were able to show that, for $q > 1$, asymptotics of RPE can be different from that of classical permutation entropy, while they coincide for $q < 1$ in the case of continuous piecewise monotone interval maps with a measure of maximal entropy. For further references on different variants of RPE proposed in the literature so far, we refer the reader to the short overview provided by \cite{gut_kel_22}.

Here, we consider the case $p=2$ which is arguably the most prominent one in the literature. 
R\'enyi-2 permutation entropy is strongly related to the symbolic correlation integral recently proposed by \cite{Caballero2019} (cf. Section 3). In fact, from a practical point-of-view the symbolic correlation integral and R\'enyi-2 permutation entropy can be used interchangeably as we will see later. The symbolic correlation integral is inspired by the widely used classical correlation integral defined by \cite{gra_pro_83}. The latter has been used in many scientific fields, for instance, physics and bioengineering; see, e.g., the references provided by \cite{acostaetal}. It is defined as the probability of two arbitrary points are within a fixed distance. Denoting this proximity parameter by $\varepsilon$, the classical correlation integral has the major disadvantage that, by construction, $\varepsilon$ has to be selected beforehand and has a high chance of affecting statistical conclusions of correlation-integral based methods. At the same time, the literature on methods for choosing the correct (range of) parameters is limited. Hence, this $\varepsilon$-dependence constitutes a non-negligible problem for practitioners (cf. \citealp{Caballero2019}).
The symbolic correlation integral, on the other hand, avoids this kind of dependence on a a priori selected parameter, and it can be understood as the degree of recurrence of ordinal patterns in a time series (cf.\ \citealp[p.~8]{Keller2017} and \citealp[p.~537]{Caballero2019}). 
\cite{wei_22} has considered the symbolic correlation integral statistic for processes consisting of independent and identically distributed real-valued random variables. He has shown that its asymptotic distribution is given by a so-called quadratic-form distribution in this case.

In this paper, our goal is to derive the limit distribution of the symbolic correlation integral (and hence also the R\'enyi-2 permutation entropy) for a broad class of short-range dependent time series, namely 1-approximating functionals. Therefore, we complement the results by \cite{wei_22}. To this end, we will take advantage of the fact that the R\'enyi-2 permutation entropy or symbolic correlation integral has a strong relation to U-statistics. 

Our contributions will prove to be useful in a variety of classification tasks, that is, they allow us to distinguish time series or data stemming from time series based on the degree of complexity present in each one of them.
For example, to a certain extent we are able to distinguish, e.g., ARMA-models that differ only in the choice of their parameters. Furthermore, our approach also allows for testing whether two data sets follow the same underlying model in the sense of a distinction between, e.g., an AR- and an MA-model.
Possible practical applications include, e.g., the distinction between healthy patients and patients with specific diseases for which the collected data exhibits a different degree of complexity if compared to the data collected from healthy patients. Well-known examples for such a diseases are epilepsy as well as different kinds of heart diseases (see \citealp{AmigoEtAl2015} and the references mentioned therein). 

The paper is organized as follows: In the subsequent two sections we describe the mathematical framework and give a brief introduction into the main quantity of interest, namely the symbolic correlation integral. Then we derive its limit distribution for the class of 1-approximating functionals (Section~\ref{section: SCI limit theorems}) and estimate the limit variance (Section~\ref{section: SCI limit variance}). This is followed by the development of hypothesis tests on whether two time series exhibit the same degree of complexity (Section~\ref{section: applications}). In this regard, we conduct a simulation study where we consider different models including AR-, MA- and ARCH-processes with varying parameters and illustrate the applicability of our test with a real-world data example. Furthermore, we compare the performance of our test with two existing tests in the literature. In Appendix~A we have collected some additional information of different entropy concepts and in Appendix~B the reader finds some auxiliary results on near-epoch dependence.

\section{Mathematical Definitions}

First, we introduce the mathematical framework. In all that follows we will work with stationary time series. The term `time series' is always meant as a discrete-time stochastic process with values in $\bbr$.

In order to derive limit theorems, we have to make some additional assumptions on the time series under consideration. Here, we will use a concept which is known as `r-approximating functional' and which is sometimes called `near-epoch dependence' in the econometric literature.  Models of this kind are very general, in particular several interesting dynamical systems are contained in this class. 

Note that there is a direct relationship between dynamical systems and stationary time series: Let $\bar{\Omega} = \bbr^\bbz$, $\bar{\mathcal{F}} = \mathcal{B}(\bbr^\bbz)$ and $\Bar{\bbp} = \bbp_X$ and define the shift-operator 
\begin{equation*}
    \tau : \Bar{\Omega} \to \Bar{\Omega}, (..., x_{-2}, x_{-1}, x_0, x_1, x_2, ...) \mapsto (..., x_{-1}, x_0, x_1, x_2, x_3, ...).
\end{equation*}
If $(X_t)_{t \in \bbz}$ is a stationary time series, then $(\Bar{\Omega}, \Bar{\mathcal{F}}, \Bar{\bbp}, \tau)$ is a measure-preserving dynamical system. 
On the other hand, let $(\Omega, \mathcal{F}, \bbp, \tau)$ denote a measure-preserving dynamical system. For any measurable function $f : \Omega \to \bbr^d$, $(X_t)_{t \in \bbz}$ defined by
\[
    X_t(\omega) = f(\tau^t (\omega))
\]
is a stationary time series (see, e.g., \citealp{Kle_20}).

In addition we will assume from now on that every finite dimensional distribution of the time series under consideration is continuous. 
As mentioned in the introduction, we will make use of the following tool in order to analyze time series (models) or data sets (reality). 

\begin{definition}
For $d\in\mathbb{N}$ let $S_d$ denote the set of permutations of $\{1, \ldots, d\}$, which we write as $d$-tuples containing each of the numbers $1, \ldots, d$ exactly once.
By the \emph{ordinal pattern of order} $d$ of some vector $x = (x_1, \ldots, x_d)$ we refer to the permutation
$\pi=(\pi_1,..., \pi_d)\in S_d$
which satisfies
\[
\pi_j < \pi_k \ \Longleftrightarrow \ x_j < x_k \ \text{ (or } x_j=x_k \text{ for } j < k)
\]
for every $j,k\in \{1,2,...,d\}$. 
\end{definition}

There are various ways to deal with ties in the definition of the patterns. From our point of view the above is easiest to handle. In the present article we assume that the distributions under consideration are continuous. This means, that the probability of ties to occur is zero. If this was not the case, one might want to consider patterns allowing for ties as in \cite{schn_fis_22b, wei_schn_23}.

\begin{definition}
Let $(Z_t)_{t\in\bbz}$ be a real-valued stationary time series. We call a sequence $(X_t)_{t\in\bbz}$ a \emph{functional of} $(Z_t)_{t\in\bbz}$ if there is a measurable function $f$ defined on $\bbr^\bbz$ such that
\begin{align}
X_t=f((Z_{t+k})_{k\in\bbz}).
\end{align}
\end{definition}

Note that $(X_t)_{t\in\bbz}$ is necessarily a stationary time series. 
The idea is now to use a quite general `background process' $(Z_t)_{t\in\bbz}$ combined with quite general class of functions $f$ in order to derive a class of processes $(X_t)_{t\in\bbz}$ which is general enough in order to contain interesting examples and on the other hand still tractable and allowing for limit theorems. 

\begin{definition}
For a time series $(Z_t)_{t\in\bbz}$ and $k \leq l$, define the $\sigma$-fields $\ca_{k}^l:=\sigma(Z_k,...,Z_l)$.
$(Z_t)_{t\in\bbz}$ is called \emph{absolutely regular} if $\beta_k\to 0$ where
\begin{align*}
\beta_k&= 2\sup_n \left\{ \sup_{A\in \ca_{n+k}^\infty} (\bbp(A|\ca_1^n)-\bbp(A)) \right\} \\
&=\sup_n \left\{ \sup \sum_{i=1}^I \sum_{j=1}^J | \bbp(A_i\cap B_j) - \bbp(A_i) \bbp(B_j) |\right\}
\end{align*}
where the last supremum is over all finite $\ca_1^n$-measurable partitions $(A_1,...,A_I)$ and all finite $\ca_{n+k}^\infty$-measurable partitions $(B_1,...,B_J)$. 
\end{definition}

This kind of time series is sometimes called `weak Bernoulli'.  On this class of time series, we consider certain functionals in order to be more general. There exists a notion of so called `Lipschitz functionals' (cf.\ \citealp[Definition~1.3]{borovkovaetal}). This is often too restrictive and has been relaxed to derive the following notion: 

\begin{definition}
Let  $(X_t)_{t\in\bbz}$ be a functional of $(Z_t)_{t\in\bbz}$ and let $r\geq 1$. Suppose that $(a_k)_{k\in\bbn_0}$ are constants with $a_k\to 0$.  We say that $(X_t)_{t\in\bbz}$ satisfies the \emph{$r$-approximating condition} or that it is an $r$\emph{-approximating functional}, if 
\begin{align} \label{approxatzero}
 \bbe\norm{X_0-\bbe(X_0|\ca_{-k}^k)}^r \leq a_k.
\end{align}
for all $k \in \bbn_0$.
The sequence $(a_k)_{k \in \bbn_0}$ of approximating constants is said to be \emph{of size} $- \lambda$ if $a_k = \mathcal{O}(k^{-\lambda-\varepsilon})$ for some $\varepsilon > 0$. 
\end{definition}

Note that the $r$-approximating condition does not necessarily require that the $r$-th moments of $X_0$ exist. One can construct simple examples, exploiting the fact, that the conditional expectation exists for every positive random variable, even if it is not integrable. 

The concepts of $L_r$-near epoch dependence and $r$-approximating functionals are closely linked, which is why the terms are sometimes used interchangeably (see, e.g., \citealp{deh_vog_wen_wie_17, schnurrdehling}): 
Under the assumption of stationarity, possible trends as considered in the definition of $L_r$-near epoch dependence are omitted, hence, $L_r$-near epoch dependence and $r$-approximating are equivalent for stationary time series. The only (notational) difference is the size of the constants under consideration: If $(Y_t)_{t \in \bbz}$ is a stationary $L_r$-near epoch dependent time series on $(Z_t)_{t \in \bbz}$ with constants $(a_k)_{k \in \bbn_0}$, then $(Y_t)_{t \in \bbz}$ is an $r$-approximating functional of $(Z_t)_{t \in \bbz}$ with constants $(a_k^r)_{k \in \bbn_0}$ (for a definition of $L_r$-near epoch dependence, see, e.g., \citealp{davidson}).

The following lemma is very important for the subsequent reasoning. As we have seen, Eq.~\eqref{approxatzero} is only demanded to hold at time zero. For stationary time series, this equation is `shift invariant'.  This fact seems to be a kind of folklore in the community. However, since we could not find a proof in the existing literature, we have decided to include it for the readers' convenience.

\begin{lemma}\label{lem: r-appox shift-invariance}
    Let $(X_t)_{t \in \bbz}$ be a (possibly $\bbr^d$-valued) functional of a stationary time series $(Z_t)_{t \in \bbz}$ and $r > 0$. Then, for all $l \geq 0$ it holds
    \begin{equation}
        \bbe\norm{X_t-\bbe(X_t|\ca_{t-l}^{t+l})}^r = \bbe\norm{X_0-\bbe(X_0|\ca_{-l}^l)}^r
    \end{equation}
    for all $t \in \bbz$.
\end{lemma}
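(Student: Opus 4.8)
The plan is to reduce everything to the shift-invariance of the law of the background process $(Z_t)_{t\in\bbz}$. Write $\mathbf{Z}=(Z_k)_{k\in\bbz}$ for the associated $\bbr^\bbz$-valued random element and let $\tau$ denote the coordinate shift on $\bbr^\bbz$, so that $(\tau^t\mathbf{Z})_k=Z_{t+k}$ and hence $X_t=f(\tau^t\mathbf{Z})$. Since $(Z_t)_{t\in\bbz}$ is stationary, $\tau^t\mathbf{Z}$ and $\mathbf{Z}$ have the same distribution for every $t\in\bbz$; this is the only probabilistic input I will use. First I would make the conditioning explicit: by the Doob--Dynkin factorization lemma there is a measurable map $h\colon\bbr^{2l+1}\to\bbr^d$ with
\[
\bbe(X_0\mid\ca_{-l}^l)=h(Z_{-l},\ldots,Z_l)\quad\text{a.s.}
\]
When $X_0$ fails to be integrable one still defines the conditional expectation componentwise through positive and negative parts, as noted after the definition of $r$-approximating functionals; the factorization is unaffected, so I will not belabor this point.

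The heart of the argument is to show that the \emph{same} $h$ works at every time $t$, that is,
\[
\bbe(X_t\mid\ca_{t-l}^{t+l})=h(Z_{t-l},\ldots,Z_{t+l})\quad\text{a.s.}
\]
The right-hand side is manifestly $\ca_{t-l}^{t+l}$-measurable, so by the almost-sure uniqueness of conditional expectations it suffices to verify the defining integral identity against every $A\in\ca_{t-l}^{t+l}$. Each such $A$ has the form $\{(Z_{t-l},\ldots,Z_{t+l})\in B\}$ for a Borel set $B\subseteq\bbr^{2l+1}$, and both the integrand $X_t=f(\tau^t\mathbf{Z})$ and the indicator $\indicator{(Z_{t-l},\ldots,Z_{t+l})\in B}$ are measurable functions of the single shifted sequence $\tau^t\mathbf{Z}$, since $(Z_{t-l},\ldots,Z_{t+l})=((\tau^t\mathbf{Z})_{-l},\ldots,(\tau^t\mathbf{Z})_l)$.

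Writing the left-hand integral as $\bbe[\Phi(\tau^t\mathbf{Z})]$ with $\Phi(w)=f(w)\,\indicator{(w_{-l},\ldots,w_l)\in B}$ and invoking $\tau^t\mathbf{Z}\stackrel{d}{=}\mathbf{Z}$ turns it into $\bbe[\Phi(\mathbf{Z})]=\bbe[X_0\,\indicator{(Z_{-l},\ldots,Z_l)\in B}]$, which by the defining property of $\bbe(X_0\mid\ca_{-l}^l)$ equals $\bbe[h(Z_{-l},\ldots,Z_l)\,\indicator{(Z_{-l},\ldots,Z_l)\in B}]$. Running the same stationarity substitution in reverse, now applied to the function $w\mapsto h(w_{-l},\ldots,w_l)\,\indicator{(w_{-l},\ldots,w_l)\in B}$, shows this equals $\bbe[h(Z_{t-l},\ldots,Z_{t+l})\,\indicator{(Z_{t-l},\ldots,Z_{t+l})\in B}]$, which is precisely the right-hand integral; this establishes the displayed identity. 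With the conditional expectation identified, the conclusion is immediate: setting $G(w)=\norm{f(w)-h(w_{-l},\ldots,w_l)}^r$ one has
\[
\bbe\norm{X_t-\bbe(X_t\mid\ca_{t-l}^{t+l})}^r=\bbe[G(\tau^t\mathbf{Z})]=\bbe[G(\mathbf{Z})]=\bbe\norm{X_0-\bbe(X_0\mid\ca_{-l}^l)}^r,
\]
again by $\tau^t\mathbf{Z}\stackrel{d}{=}\mathbf{Z}$.

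The main obstacle is the identification step: one must resist treating $X_t$ as a function of the finite window $(Z_{t-l},\ldots,Z_{t+l})$ alone, since it genuinely depends on the entire two-sided sequence, and instead phrase both the integrand and the conditioning event as functionals of the one shifted sequence $\tau^t\mathbf{Z}$, so that stationarity can be applied in a single stroke; the possibly non-integrable case requires nothing new beyond running this reduction separately on positive and negative parts.
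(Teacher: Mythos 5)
Your proof is correct and follows essentially the same route as the paper's: factor the conditional expectation via the Doob--Dynkin lemma and then invoke stationarity of the underlying sequence $(Z_t)_{t\in\bbz}$. The only difference is that you verify directly that the same factorizing function works at every time $t$ by checking the defining integral identity against generators of $\ca_{t-l}^{t+l}$, whereas the paper cites the fact that the factorizing function depends only on the joint law of $(X_t,(Z_{t-l},\dots,Z_{t+l}))$; your version is self-contained, but the substance is identical.
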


\begin{proof}
    Define the vector $\bfZ^{l}_{k} := (Z_k, ..., Z_l)$, $k < l$, consisting of $l-k+1$ consecutive components of the time series $(Z_t)_{t \in \bbz}$. 
    Let $l \geq 0$ and $t \in \bbz$ be fixed.
    By the Doob–Dynkin lemma (or factorization lemma), there is a map $g : \bbr^{2l+1} \to \bbr^d$ such that 
    \begin{equation*}
        \bbe(X_t|\bfZ^{t+l}_{t-l}) = g(\bfZ^{t+l}_{t-l})
    \end{equation*}
    holds almost surely. Since $g$ depends only on the distribution of $(X_t, \bfZ^{t+l}_{t-l})$ (see \citealp[p.~167]{kallenberg97}) and $(X_t, \bfZ^{t+l}_{t-l}) \overset{D}{=} (X_0, \bfZ_{-l}^l)$ by stationarity and definition of $X_t$, it follows
    \begin{equation*}
        \bbe(X_t|\bfZ^{t+l}_{t-l}) = \bbe(X_0|\bfZ^l_{-l})
    \end{equation*}
    almost surely. Note that the random variables do not necessarily need to be measurable with respect to the same $\sigma$-algebra. Then, it holds
    \begin{equation*}
        \bbp\brackets{X_t-\bbe(X_t|\bfZ_{t-l}^{t+l}) \in B}
        = \bbp\brackets{X_t-\bbe(X_0|\bfZ_{-l}^l) \in B}
        = \bbp\brackets{X_0-\bbe(X_0|\bfZ_{-l}^l) \in B}
    \end{equation*}
    for all $B \in \mathcal{B}(\bbr)$, which concludes the proof.
\end{proof} 

Since we are interested in the relationship between $d$ consecutive data points, it is convenient to consider the modified time series $\overline{X}:=(\overline{X}_t)_{t\in\bbz}$ defined by the components
\[
\overline{X}_t:={(X_t,..., X_{t+d-1})'}
\]
obtained by a sliding window approach.
Here, and in all that follows, vectors in $\bbr^d$ are thought of as column vectors and we write $x'$ to denote the transposed vector. 
Note that if the time series $(X_t)_{t\in \bbz}$ is stationary, then $(\overline{X}_t)_{t\in \bbz}$ is also stationary, since considering $d$ consecutive data points is only a simple functional used on the original time series.

In addition, the following property is also preserved: 

\begin{lemma}\label{lemma: linex approx functional}
Let $r \geq 1$. 
If the time series $(X_t)_{t\in \bbz}$ can be expressed as an $r$-approximating functional $X_t=f((Z_{t+k})_{k\in \bbz})$ of the stationary time series $(Z_t)_{t\in \bbz}$ with approximating constants $(a_k)_{k \in \bbn_0}$ of size $-\lambda$, then $\overline{X}_t=g((Z_{t+k})_{k\in \bbz})$ itself satisfies the $r$-approximating condition for $k \geq d$ with approximating constants of the same size. 
\end{lemma}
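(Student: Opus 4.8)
The plan is to reduce the vector-valued $r$-approximating condition for $\overline{X}$ to the scalar one for $X$, one component at a time, and then reassemble. Writing $\overline{X}_0=(X_0,\dots,X_{d-1})'$, the error vector $\overline{X}_0-\bbe(\overline{X}_0\mid\ca_{-k}^{k})$ has $j$-th entry $X_j-\bbe(X_j\mid\ca_{-k}^{k})$ for $0\le j\le d-1$. For the Euclidean norm one has $\norm{v}\le\sum_{j=0}^{d-1}\abs{v_j}$, and the power-mean inequality gives $\brackets{\sum_{j=0}^{d-1}\abs{v_j}}^{r}\le d^{r-1}\sum_{j=0}^{d-1}\abs{v_j}^{r}$ for $r\ge1$ (and any other norm on $\bbr^d$ changes only constants). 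Hence it suffices to bound each scalar $\bbe\abs{X_j-\bbe(X_j\mid\ca_{-k}^{k})}^{r}$ by a fixed multiple of $a_{k-j}$; summing then produces an approximating constant of the form $\tilde a_k=2^r d^{r-1}\sum_{j=0}^{d-1}a_{k-j}$, which for fixed $d$ is $\mathcal{O}(k^{-\lambda-\varepsilon})$ and therefore of size $-\lambda$. The restriction $k\ge d$ enters precisely to ensure $k-j\ge1$ for every $j$, so that all indices $a_{k-j}$ are legitimate.

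The core is the scalar estimate, and I would first pick the largest symmetric window around the index $j$ that still fits inside $\{-k,\dots,k\}$. For $0\le j\le d-1$ this is $\ca_{2j-k}^{k}=\ca_{\,j-(k-j)}^{\,j+(k-j)}$, of half-width $k-j$, and since $j\ge0$ it satisfies $\ca_{2j-k}^{k}\subseteq\ca_{-k}^{k}$. Applying the shift-invariance result (Lemma~\ref{lem: r-appox shift-invariance}) with $t=j$ and $l=k-j$ yields $\bbe\abs{X_j-\bbe(X_j\mid\ca_{2j-k}^{k})}^{r}=\bbe\abs{X_0-\bbe(X_0\mid\ca_{-(k-j)}^{k-j})}^{r}\le a_{k-j}$, so the conditional expectation on the symmetric window centred at $j$ already attains the prescribed rate.

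It remains to pass from the window $\ca_{2j-k}^{k}$ to the window $\ca_{-k}^{k}$ that actually appears in the definition, and this is where I expect the only real difficulty. The tempting shortcut ``a larger conditioning $\sigma$-field gives a smaller $L^r$-error'' is \emph{false} for general $r\ge1$: it holds for $r=2$ by orthogonal projection, but for $r\ne2$ the conditional expectation is not the $L^r$-best predictor, and simple two-point examples show the monotonicity can fail. Instead, setting $g_k:=\bbe(X_j\mid\ca_{-k}^{k})$ and $h:=\bbe(X_j\mid\ca_{2j-k}^{k})$, the tower property (using $\ca_{2j-k}^{k}\subseteq\ca_{-k}^{k}$) gives $g_k-h=\bbe(X_j-h\mid\ca_{-k}^{k})$, and the genuine $L^r$-contraction of conditional expectation (conditional Jensen, valid for all $r\ge1$) yields $\bbe\abs{g_k-h}^{r}\le\bbe\abs{X_j-h}^{r}\le a_{k-j}$. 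Minkowski's inequality then gives $\brackets{\bbe\abs{X_j-g_k}^{r}}^{1/r}\le\brackets{\bbe\abs{X_j-h}^{r}}^{1/r}+\brackets{\bbe\abs{g_k-h}^{r}}^{1/r}\le2\,a_{k-j}^{1/r}$, so $\bbe\abs{X_j-\bbe(X_j\mid\ca_{-k}^{k})}^{r}\le2^{r}a_{k-j}$, exactly what the reassembly step needs.

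The measurability assertion $\overline{X}_t=g((Z_{t+k})_{k\in\bbz})$ is immediate, since stacking the functionals $X_{t+i}=f((Z_{t+i+k})_{k\in\bbz})$ for $0\le i\le d-1$ produces a measurable function of the shifted background sequence. Thus the substantive obstacle is the one identified above: recognising that $L^r$-monotonicity in the conditioning field must be \emph{avoided} for $r\ne2$, and circumventing it through the tower property and the $L^r$-contraction, at the cost of only the harmless constant factor $2^r$, which leaves the size $-\lambda$ of the approximating constants unchanged.
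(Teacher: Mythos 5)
Your proposal is correct and follows essentially the same route as the paper: decompose the vector error componentwise via norm equivalence, condition each $X_j$ on the largest symmetric window around $j$ contained in $\ca_{-k}^{k}$, invoke the shift-invariance lemma to get the rate $a_{k-j}$, and pay a bounded multiplicative factor to pass to the full window $\ca_{-k}^{k}$. The only difference is that you prove the enlargement step (tower property, conditional Jensen, Minkowski, giving the factor $2^r$) from scratch, whereas the paper cites it as Theorem~10.28 of \cite{davidson}.
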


\begin{proof}
Let $(X_t)_{t\in \bbz}$ be an $r$-approximating functional of  $(Z_t)_{t\in \bbz}$. It holds
\begin{align*}
    \bbe \norm{\overline{X}_0-\bbe(\overline{X}_0|\ca_{-k}^k)}_r^r
    &= \bbe \left( \abs{X_{-d+1} - \bbe(X_{-d+1}| \ca_{-k}^{k})}^r+...+ \abs{X_{0} - \bbe(X_{0}| \ca_{-k}^{k})}^r  \right)\\ 
    &= \bbe  \abs{X_{-d+1} - \bbe(X_{-d+1}| \ca_{-k}^{k})}^r+...+\bbe \abs{X_{0} - \bbe(X_{0}| \ca_{-k}^{k})}^r.
\end{align*}
Here, we have used the $r$-norm, which can be done, since every norm on $\bbr^d$ is equivalent.
For $k \geq d$, $\ca^{-i+(k-i)}_{-i-(k-i)} = \ca^{k-2i}_{-k} \subset \ca^{k}_{-k}$ constitutes a sub-$\sigma$-algebra of $\ca^{k}_{-k}$ for all $0 \leq i \leq d-1$, so, e.g., Theorem 10.28 in \cite{davidson} yields 
\begin{equation*}
    \bbe  \abs{X_{-i} - \bbe(X_{-i}| \ca_{-k}^{k})}^r \leq 2 \bbe  \abs{X_{-i} - \bbe(X_{-i}| \ca_{-k}^{k-2i})}^r
\end{equation*}
for all $0 \leq i \leq d-1$. Hence, using Lemma \ref{lem: r-appox shift-invariance} it follows 
\begin{align*}
    \bbe \norm{\overline{X}_0-\bbe(\overline{X}_0|\ca_{-k}^k)}_r^r
    &\leq 2 \sum^{d-1}_{i=0} \bbe  \abs{X_{-i} - \bbe(X_{-i}| \ca_{-k}^{k-2i})}^r \\
    &= 2 \sum^{d-1}_{i=0} \bbe  \abs{X_0 - \bbe(X_0| \ca_{-(k-i)}^{k-i})}^r \\
    &\leq 2 \sum^{d-1}_{i=0} a_{k-i}.
\end{align*}
\end{proof}

Without further assumptions we cannot validate the $r$-approximating condition of $(\overline{X}_t)_{t \in \bbz}$ for $0 \leq k < d$, that is, the boundedness of 
\begin{equation*}
    \bbe \norm{\overline{X}_0-\bbe(\overline{X}_0|\ca_{-k}^k)}_r^r \text{ for } 0 \leq k \leq d-1.
\end{equation*}
However, since $d$ is not only finite, but in particular very small in practice, with regard to limit theorems we can allow for the first $d$ approximating constants $(\overline{a}_k)_{k \in \bbn_0}$ of $(\overline{X}_t)_{t \in \bbz}$ being infinite, i.e., $\overline{a}_0 = \dots = \overline{a}_{d-1} = \infty$, because this does not have an influence on their rate of convergence. Therefore, a problem can only arise here if a summability condition is imposed on the approximating constants. This is precisely the case for the limit theorems of \cite{borovkovaetal} which we want to employ in the next section. However, the summability condition in their limit theorems can be weakened such that summability is only required for all but a finite number of approximating constants. The respective results as well as their proofs are given in Appendix~\ref{section: appendix limit theorems r-approx}.

We will make use of the following concept in Section~\ref{section: SCI limit theorems}. In a way this complements the ordinal information we usually consider with a metric information of the vector under consideration.  To our knowledge this concept is new, at least in the context of ordinal pattern analysis. 

\begin{definition}
Let $x=(x^{(1)}, ..., x^{(d)})'\in\bbr^d$. The \emph{minimal spread} of the vector is
\[
  \text{ms}(x):=\min\{\abs{x^{(j)}-x^{(k)}}: 1\leq j < k \leq d \}.
\]
\end{definition}

By our assumptions (continuous finite dimensional distributions) we always have $\bbp(\text{ms}(\overline{x}_t)=0)=0$ for every $t\in\bbz$.

\section{R\'enyi-2 Permutation Entropy and the Symbolic Correlation Integral}
\label{section: RPE and SCI}

Let $F$ denote the cumulative distribution function
\[
   F(y)=F_{\overline{X}_0}(y)=F_{\overline{X}_t}(y)=\bbp(\overline{X}_t\leq y)=\bbp(X_t \leq y_1, \dots, X_{t+d-1} \leq y_d)
\]
for $y=(y_1, \dots, y_d)\in \bbr^d$. 
Given some $t\in \bbz$ and defining $\Pi : \bbr^d \to S_d$ as the function which assigns each $d$-dimensional vector its ordinal pattern, the main quantity under consideration in this paper is 
\begin{eqnarray}
\sum_{\pi \in S_d}\bbp ( \indicator{\Pi(\overline{X}_t)=\pi})^2
&=&\sum_{\pi \in S_d}\left (\int_{\bbr^d} \indicator{\Pi(x)=\pi} \ dF(x)\right )^2 \nonumber \\
&=&\sum_{\pi \in S_d}\int_{\bbr^d} \int_{\bbr^d} \indicator{\Pi(x)=\pi}\indicator{\Pi(y)=\pi}  \ dF(x) dF(y) \nonumber \\
&=&\int_{\bbr^d} \int_{\bbr^d} \indicator{\Pi(x)=\Pi(y)} \ dF(x) dF(y) =: S^d, \label{eq: relation SCI Renyi-2 entropy}
\end{eqnarray}
which does not depend on $t \in \bbz$ by stationarity. Note that we have applied Fubini's theorem in the second equation. The r.h.s.~of \eqref{eq: relation SCI Renyi-2 entropy} is the symbolic correlation integral (SCI) as recently proposed by \cite{Caballero2019}. As the l.h.s.~shows, it is nothing more than the exponential function applied to the sign-reversed R\'enyi-2 entropy 
\begin{eqnarray*}
        R_2 = R_2((p_\pi)_{\pi\in S_d})=-\log \sum_{\pi \in S_d}p_\pi^2
\end{eqnarray*}
of the ordinal pattern distribution $(p_\pi)_{\pi\in S_d}=\bigl(\bbp (\Pi(\overline{X}_t)=\pi )\bigr)_{\pi\in S_d}$. Some information on this and similar entropies is given in Appendix~\ref{propent}. 

The definition of the SCI is highly influenced by the widespread and well-known classical correlation integral by \cite{gra_pro_83}, which is given by the probability of two arbitrary points on the orbit of the state space that are within a distance of $\varepsilon$ for some $\varepsilon > 0$ fixed beforehand (see \citealp{Caballero2019}). In other words, the correlation integral depends highly on the a priori selection of $\varepsilon$. The SCI, on the other hand, avoids this $\varepsilon$-dependence.

Eq.~\eqref{eq: relation SCI Renyi-2 entropy} shows that the SCI can be easily computed if the ordinal pattern probabilities are known. For example, in case of a uniform distribution, which is, e.g., the case for i.i.d.~series, it holds
\[
   S^d = \sum_{\pi \in S_d} p_{\pi}^2 = d! \cdot \left(\frac{1}{d!}\right)^2 = \frac{1}{d!}.
\]
For most of the time series models, nothing is known yet about the obtained ordinal pattern distributions. Exceptions are Gaussian as well as ARMA-processes (under some additional restrictions). The corresponding theorems can be found in \cite{ban_shi_07} and \cite{sin_kel_11}.

Note that the SCI attains its minimum in case of uniformly distributed ordinal patterns, that is $1/d!$, and its maximum of 1 in case of a one-point distribution. The latter corresponds to a monotonically increasing or decreasing time series. This is, in fact, the only case where a time series' ordinal patterns follow a one-point distribution as ordinal patterns are obtained in a sliding window approach. For the R\'enyi-2 permutation entropy $R_2=-\log(S^d)$, maximum and minimum are reversed.

We consider the U-statistic
\[
  S^d_n:= \frac{2}{n(n-1)}\sum_{1\leq j<k \leq n} \indicator{\Pi(\overline{X}_j)=\Pi(\overline{X}_k)}
\]
as an estimator of $S^d$. Note the connection to the recurrence of ordinal patterns by definition. Furthermore, \citet[Appendix A.1]{weis_mar_kel_mat_22} have shown that there is a strong relation between the symbolic correlation integral and the sample index of qualitative variation according to \cite{kva_95}.
By now, the limit results for this statistic are limited to the i.i.d.~case (see \citealp{wei_22}), which is a serious drawback, since several applications one has in mind exhibit serial dependence. We close this gap in the following.


\section{Limit Theorems for the Symbolic Correlation Integral}
\label{section: SCI limit theorems}

The key idea for giving more general statements is to use the deep theorems of \cite{borovkovaetal}. In order to make use of them, amongst others we have to show that our kernel
\[
  h(x,y):=\indicator{\Pi(x)=\Pi(y)}
\]
satisfies the following technical condition (cf.\ \citealp[Definition~2.12]{borovkovaetal}). We state this definition in the multivariate version. The Sections~1--5 of the article we make use of are all written down for the one-dimensional case. However, in the sixth section, the one dealing with entropy concepts, the authors state that all the previous results hold true in a $d$-dimensional setting as well. 

\begin{definition}\label{definition: p-lipschitz cond}
Let $(Y_t)_{t\in\bbz}$ be a stationary time series and $h: \bbr^{2d}\to \bbr$ be a measurable, symmetric kernel. Then we say that $h$ is \emph{$p$-continuous} if there exists a function $\phi:\left]0,\infty\right[ \to \left]0,\infty\right[$ with $\phi(\varepsilon)=o(1)$ as $\varepsilon \to 0$ such that
\begin{align}
  \bbe \abs{h(U,V)-h(\tilde{U},V)}^p \indicator{\norm{U-\tilde{U}}\leq \varepsilon} \leq \phi(\varepsilon) \label{p-lipschitz cond}
\end{align} 
for all random variables $U$, $\tilde{U}$, $V$ with marginal distribution $F=F_{Y_0}$ and such that $(U,V)$ either has distribution $F\times F$ (independent case) or $\bbp_{(Y_0,Y_t)}$ for some $t\in \bbn$. 
\end{definition}

Instead of calling a kernel $h$ $p$-continuous, some authors say that $h$ satisfies the \emph{$p$-Lipschitz condition}. 
Here, we consider $p$-continuity for our time series $(\overline{X}_t)_{t\in\bbz}$.  

\begin{proposition} \label{prop:1approx}
Let $p\geq 1$. The kernel $h:(x,y)\mapsto \indicator{\Pi(x)=\Pi(y)}$ is $p$-continuous with respect to $(\overline{X}_t)_{t\in\bbz}$.
\end{proposition}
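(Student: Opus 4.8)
The goal is to bound $\bbe \abs{h(U,V)-h(\tilde U, V)}^p \indicator{\norm{U-\tilde U}\leq \varepsilon}$ uniformly by some $\phi(\varepsilon)\to 0$, where $h(x,y)=\indicator{\Pi(x)=\Pi(y)}$. The first observation I would make is that $h$ takes values in $\{0,1\}$, so $\abs{h(U,V)-h(\tilde U,V)}^p = \abs{h(U,V)-h(\tilde U,V)}$ for every $p\geq 1$, and this difference is itself an indicator: it equals $1$ exactly when $\Pi(U)=\Pi(V)$ but $\Pi(\tilde U)\neq\Pi(V)$, or vice versa. In either case $\Pi(U)\neq\Pi(\tilde U)$ must hold (if $\Pi(U)=\Pi(\tilde U)$ then $h(U,V)=h(\tilde U,V)$). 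Hence the integrand is dominated by $\indicator{\Pi(U)\neq\Pi(\tilde U)}\indicator{\norm{U-\tilde U}\leq\varepsilon}$, and it suffices to show
\[
  \bbe\, \indicator{\Pi(U)\neq\Pi(\tilde U)}\,\indicator{\norm{U-\tilde U}\leq\varepsilon} \longrightarrow 0 \quad\text{as }\varepsilon\to 0,
\]
uniformly over the admissible triples $(U,\tilde U,V)$.

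**The geometric heart of the argument.** Next I would connect the event $\{\Pi(U)\neq\Pi(\tilde U)\}$ with the \emph{minimal spread} introduced in the excerpt. The key geometric fact is: if two vectors $x,\tilde x\in\bbr^d$ have different ordinal patterns, then some pair of coordinates that is strictly ordered in $x$ has the reverse (or tied) order in $\tilde x$; moving one coordinate ranking past another forces some coordinate of $x$ to be within $\norm{x-\tilde x}$ (in sup-norm, say) of another coordinate. Concretely, I would argue that if $\norm{x-\tilde x}_\infty \leq \varepsilon$ and $\Pi(x)\neq\Pi(\tilde x)$, then $\text{ms}(x)\leq 2\varepsilon$: a pattern change requires two coordinates $x^{(j)}, x^{(k)}$ to swap relative order, which means $\abs{x^{(j)}-x^{(k)}}\leq \abs{x^{(j)}-\tilde x^{(j)}}+\abs{\tilde x^{(j)}-\tilde x^{(k)}}+\abs{\tilde x^{(k)}-x^{(k)}}$ with the middle term of the wrong sign, forcing the gap to be at most $2\varepsilon$. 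Therefore the integrand is bounded by $\indicator{\text{ms}(U)\leq 2\varepsilon}$ (using equivalence of norms to pass from $\norm{\cdot}$ to $\norm{\cdot}_\infty$ at the cost of a constant), and so
\[
  \bbe\,\indicator{\Pi(U)\neq\Pi(\tilde U)}\,\indicator{\norm{U-\tilde U}\leq\varepsilon} \leq \bbp\bigl(\text{ms}(U)\leq c\,\varepsilon\bigr)
\]
for a dimension-dependent constant $c$.

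**Closing via continuity and uniformity.** Setting $\phi(\varepsilon):=\bbp(\text{ms}(U)\leq c\varepsilon)$, it remains to see that $\phi(\varepsilon)\to 0$ as $\varepsilon\to 0$ and that this holds uniformly over the two admissible distributions of $(U,V)$. Since $U$ has marginal $F=F_{\overline{X}_0}$ in both the independent case and the $\bbp_{(\overline X_0,\overline X_t)}$ case, the law of $U$ — and hence of $\text{ms}(U)$ — is the \emph{same} $F$ in every admissible configuration, so the bound is automatically uniform in $V$ and in $t$. Finally, by the standing assumption that all finite-dimensional distributions are continuous, $\bbp(\text{ms}(\overline X_0)=0)=0$ (as already noted in the excerpt just after the definition of minimal spread), so $\indicator{\text{ms}(U)\leq c\varepsilon}\downarrow \indicator{\text{ms}(U)=0}$ as $\varepsilon\downarrow 0$, and dominated (or monotone) convergence gives $\phi(\varepsilon)\to \bbp(\text{ms}(U)=0)=0$. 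This establishes $p$-continuity.

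**Anticipated main obstacle.** The routine parts (reducing the $p$-th power to an indicator, invoking continuity for the limit, and checking uniformity from the common marginal) are all short. The genuine work is the geometric lemma relating a pattern change under a small $\sup$-norm perturbation to a small minimal spread; I would want to state precisely that a change in ordinal pattern forces a reversal or collision of some coordinate pair, handle the tie-breaking convention in the definition of $\Pi$ carefully (since ties are resolved by index order, the "or $x_j=x_k$ for $j<k$" clause must be tracked at the boundary), and get the constant $c$ right under whichever norm the cited framework of \cite{borovkovaetal} uses. This is where I expect the only subtlety to lie.
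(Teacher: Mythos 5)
Your proposal is correct and follows essentially the same route as the paper's proof: both reduce the problem to the event $\{\mathrm{ms}(U)\leq 2\varepsilon\}$ via the geometric observation that a coordinatewise perturbation of size at most $\varepsilon$ cannot alter the ordinal pattern when the minimal spread exceeds $2\varepsilon$, and both conclude by noting that $\phi(\varepsilon)=C\,\bbp(\mathrm{ms}(U)\leq 2\varepsilon)$ depends only on the common marginal $F$ and tends to zero by continuity from above together with $\bbp(\mathrm{ms}(\overline{X}_1)=0)=0$. The only cosmetic differences are that you exploit $|h-h|^p=|h-h|$ where the paper simply bounds the integrand by $2^p$ on the bad set, and that you phrase the geometric step in contrapositive form.
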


Every norm on $\bbr^d$ is equivalent. We use the taxi-norm $\norm{\cdot}_1$ here. 

\begin{proof}
By our assumptions and continuity (from above) of the measure $\bbp$ we obtain for $t\in \bbz$
\[
  \bbp(\text{ms}(\overline{X}_t)\leq \varepsilon)=\bbp(\text{ms}(\overline{X}_1)\leq \varepsilon) \xrightarrow[\varepsilon \downarrow 0]{} \bbp(\text{ms}(\overline{X}_1)=0)=0.
\]
Now we consider with the notation from above
\begin{align}
&\bbe \abs{\indicator{\Pi(U)=\Pi(V)} - \indicator{\Pi(\tilde{U})=\Pi(V)} }^p \cdot 1_{ \{ \norm{U-\tilde{U}}_1 \leq \varepsilon \} }  \nonumber\\
&\leq \int_{\{\text{ms}(U)\leq 2\varepsilon\}}  2^p \cdot 1_{ \{ \norm{U-\tilde{U}}_1 \leq \varepsilon \} } d\bbp 
+  \int_{\{\text{ms}(U)> 2\varepsilon\}} \abs{\indicator{\Pi(U)=\Pi(V)} - \indicator{\Pi(\tilde{U})=\Pi(V)} } \cdot 1_{ \{ \norm{U-\tilde{U}}_1 \leq \varepsilon \} } d\bbp \nonumber \\
&\leq 2^p\bbp(\text{ms}(U) \leq 2\varepsilon) + 0 \label{finaleq}
\end{align}
The last inequality holds, since $\norm{U-\tilde{U}}_1 \leq \varepsilon$ implies $\abs{U^{(j)}-\tilde{U}^{(j)}}\leq \varepsilon$ for every $1\leq j \leq d$ and
since $\text{ms}(U)>2\varepsilon$, the ordinal patterns of $U$ and $\tilde{U}$ have to be the same. Hence $\indicator{\Pi(U)=\Pi(V)} - \indicator{\Pi(\tilde{U})=\Pi(V)}=0$.
The claim follows with $\phi(\varepsilon)=2^p\bbp(\text{ms}(U) \leq 2\varepsilon)$, since \eqref{finaleq} only depends on the distribution of $U$ and tends to zero for $\varepsilon\downarrow 0$. 
\end{proof}

Let us emphasize that this is a significant advantage in favor of the entropy concept we are using here. Compare in this context \cite{borovkovaetal}: They work with the Grassberger-Procaccia dimension estimator and, hence, use the family of kernels $h_t(x,y)=\indicator{\abs{x-y}\leq t}$. In order to guarantee that these satisfy the 1-continuity assumption, the family of distribution functions of $\abs{X_j-X_k}$ has to be \emph{equicontinuous} in $t$. This might be very difficult to check in practice. Using a different kernel and other method of proof, our limit results work without these equicontinuity assumptions. 

\begin{theorem}
Let $(X_t)_{t\in \bbn}$ be a 1-approximating functional of a stationary and absolutely regular time series with summable constants $(a_k)_{k \in \bbn_0}$.  Then
\[
  S^d_n \to S^d
\]
in probability, as $n$ tends to infinity. 
\end{theorem}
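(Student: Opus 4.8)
The plan is to establish convergence in probability by the second-moment (Chebyshev) method: I would show that $\bbe[S^d_n] \to S^d$ and $\Var(S^d_n) \to 0$, so that $\bbp(\abs{S^d_n - S^d} > \varepsilon) \to 0$ for every $\varepsilon > 0$. The ingredients required by the underlying U-statistic machinery are already in place. The kernel $h(x,y) = \indicator{\Pi(x)=\Pi(y)}$ is symmetric, bounded by $1$, and $1$-continuous by Proposition~\ref{prop:1approx}, while Lemma~\ref{lemma: linex approx functional} shows that $(\overline{X}_t)_{t\in\bbz}$ is a $1$-approximating functional of the absolutely regular process $(Z_t)_{t\in\bbz}$ with constants of the same (summable) size, the finitely many leading constants $\overline{a}_0, \dots, \overline{a}_{d-1}$ being irrelevant for summability as discussed above. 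This places us in the framework of \cite{borovkovaetal}, so one may either invoke their law of large numbers for U-statistics directly or argue as follows.

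For the mean, stationarity gives
\[
  \bbe[S^d_n] = \frac{2}{n(n-1)} \sum_{1\leq j < k \leq n} g(k-j), \qquad g(m) := \bbe\, h(\overline{X}_0, \overline{X}_m),
\]
so that $\bbe[S^d_n] = \sum_{m=1}^{n-1} w_{n,m}\, g(m)$ with nonnegative weights $w_{n,m} = 2(n-m)/(n(n-1))$ summing to $1$. The key point is that $g(m) \to S^d$ as $m \to \infty$: approximating $\overline{X}_0$ and $\overline{X}_m$ by the block-measurable variables $\bbe(\overline{X}_0 \mid \ca_{-l}^l)$ and $\bbe(\overline{X}_m \mid \ca_{m-l}^{m+l})$, the two blocks decouple up to the mixing coefficient $\beta_{m-2l}$ by absolute regularity, the approximation errors are controlled through the approximating constant $a_l$, and $1$-continuity of $h$ converts the metric perturbation of the arguments into a vanishing bound on the expectation. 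Letting first $m \to \infty$ and then $l \to \infty$ yields $g(m) \to \int_{\bbr^d}\int_{\bbr^d} h\, dF\, dF = S^d$. Since the weights $w_{n,m}$ place asymptotically negligible total mass on any fixed initial range of lags, a Toeplitz/Ces\`aro argument gives $\bbe[S^d_n] \to S^d$.

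For the variance I would expand
\[
  \Var(S^d_n) = \frac{4}{n^2(n-1)^2} \sum_{j_1<k_1}\sum_{j_2<k_2} \Cov\brackets{h(\overline{X}_{j_1},\overline{X}_{k_1}),\, h(\overline{X}_{j_2},\overline{X}_{k_2})}
\]
and group the index quadruples according to the largest gap between consecutive (ordered) indices. The quadruples sharing at least one index number $\mathcal{O}(n^3)$ and each contributes a bounded covariance, while for quadruples with a large separating gap the covariance is small by weak dependence. Summing these bounds shows the off-diagonal contribution is negligible and yields $\Var(S^d_n) = \mathcal{O}(1/n) \to 0$, which together with the mean convergence completes the Chebyshev argument.

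The main obstacle is this last covariance estimate: the variables $\overline{X}_t$ are not measurable with respect to finite blocks of $(Z_t)_{t\in\bbz}$, so one cannot apply a $\beta$-mixing covariance bound directly, but must first replace them by conditional expectations on finite blocks, incurring the approximation error $a_k$, and then use $1$-continuity of $h$ to pass from the perturbed to the original kernel values. This marriage of the approximating-functional approximation with the mixing decoupling is precisely the technical heart of the theory of \cite{borovkovaetal}, and in practice I would invoke their covariance inequality rather than reprove it. Indeed, since their central limit theorem (to be used for the subsequent result) already presupposes and yields a finite limit for $\Var(\sqrt{n}\,(S^d_n - S^d))$, the present weak law is most economically obtained as an immediate corollary of that analysis.
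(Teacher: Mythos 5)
Your proposal is correct and, in its primary route, coincides with the paper's proof: the paper simply verifies the three hypotheses of the law of large numbers for U-statistics of 1-approximating functionals (Theorem~\ref{theorem: lln u-statistics weaker summability}) — namely that $(\overline{X}_t)$ is a stationary 1-approximating functional of an absolutely regular time series with constants summable apart from the first $d$ (Lemma~\ref{lemma: linex approx functional}), that the kernel is 1-continuous (Proposition~\ref{prop:1approx}), and that uniform integrability follows from boundedness by $1$ — exactly the checks you perform before noting that one may invoke that result directly. Your supplementary second-moment sketch is a reasonable alternative in outline, but the covariance of the raw kernel $h$ does not become small merely because the largest gap among the four indices is large (quadruples with no shared index but two cross-pair indices close together contribute covariances converging to $\Cov(h_1(\overline{X}_0), h_1(\overline{X}_k)) \neq 0$), so making that step rigorous would require passing to the Hoeffding decomposition and treating the degenerate part separately, as the paper does only for the central limit theorem.
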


Note that we are still dealing with a two-sided functional, which is common practice with regard to time series analysis. But since we are considering data in order to estimate the R\'enyi-2 permutation entropy, for simplicity we omit observations indexed by $t \leq 0$.

\begin{proof}
This follows from Theorem~\ref{theorem: lln u-statistics weaker summability}, since we have shown that the time series $\overline{X}$ is stationary and itself a 1-approximating functional of an absolutely regular time series and whose approximating constants are summable disregarding the first $d$ constants (Lemma~\ref{lemma: linex approx functional}).
Furthermore, we have seen in Proposition~\ref{prop:1approx} that our kernel satisfies the 1-Lipschitz condition.  Finally,  the family of random variables $(\indicator{\Pi(X_j)=\Pi(X_k)})_{j,k\in \bbn}$ is uniformly integrable, since it is bounded by 1, which is integrable w.r.t. a probability measure.
\end{proof}

We need another function, which was introduced in \cite{borovkovaetal}. Recall that we write  $F=F_{\overline{X}_0}$. For a kernel $h:\bbr^{2d}\to\bbr$, we define
\[
  h_1(x):=\int_{\bbr^d} h(x,y) \ dF(y).
\]
Now we can prove a central limit theorem. 

\begin{theorem}\label{theorem: CLT SCI}
Let  $(X_t)_{t\in\bbn}$ be a 1-approximating functional of an absolutely regular time series with mixing coefficient $(\beta_k)_{k\in\bbn_0}$, and let $h$ and $h_1$ be as above. Suppose that the sequences $(\beta_k)_{k\in\bbn_0}$, $(a_k)_{k\in\bbn_0}$ and $(\phi(a_k))_{k\in\bbn_0}$ satisfy the following summability condition: 
\[
  \sum_{k=1}^{\infty} k^2 (\beta_k+a_k+\phi(a_k))<+\infty.
\]
Then the series
\begin{equation}
\label{eq: SCI limit variance}
    \sigma^2 = \Var(h_1(\overline{X}_1)) + 2 \sum_{k=2}^\infty \Cov(h_1(\overline{X}_1),h_1(\overline{X}_k))
\end{equation}
converges absolutely and, as $n\to\infty$,
\[
  \sqrt{n}(S_n^d-S^d)\xrightarrow[]{d} N(0,4 \sigma^2).
\]
\end{theorem}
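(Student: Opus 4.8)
The plan is to deduce the statement from the central limit theorem for $U$-statistics of $1$-approximating functionals of absolutely regular processes due to \cite{borovkovaetal}, in the generalized form recorded in Appendix~\ref{section: appendix limit theorems r-approx} that tolerates finitely many infinite approximating constants. The starting point is the Hoeffding decomposition of the kernel: writing $g(x,y) := h(x,y) - h_1(x) - h_1(y) + S^d$ for the degenerate part, so that $\int_{\bbr^d} g(x,y)\, dF(y) = 0$ for every $x$, one obtains
\begin{equation*}
  S_n^d - S^d = \frac{2}{n}\sum_{i=1}^n \brackets{h_1(\overline{X}_i) - S^d} + \frac{2}{n(n-1)}\sum_{1\leq j<k\leq n} g(\overline{X}_j, \overline{X}_k).
\end{equation*}
After multiplying by $\sqrt{n}$, the first (linear) term is expected to carry the Gaussian limit, while the second (degenerate) term must be shown to be asymptotically negligible.

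To set up the application I would first record that the prerequisites of the cited theorem hold for $(\overline{X}_t)_{t\in\bbz}$. By Lemma~\ref{lemma: linex approx functional}, $(\overline{X}_t)_{t\in\bbz}$ is a stationary $1$-approximating functional of the absolutely regular background process, with approximating constants of the same size once the first $d$ (possibly infinite) constants are disregarded; the weakened summability hypothesis of the appendix is designed precisely to absorb these finitely many terms. The kernel satisfies the $1$-continuity ($1$-Lipschitz) condition by Proposition~\ref{prop:1approx}, with modulus $\phi$, and since $0 \leq h \leq 1$ every moment condition on $h$, on $h_1$, and on $g$ required by the theorem holds trivially; in particular $(h_1(\overline{X}_i))_i$ is a bounded stationary sequence, and the $1$-continuity of $h$ transfers to $h_1(\overline{X}_i)$ being itself a bounded $1$-approximating functional. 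The standing assumption $\sum_k k^2(\beta_k + a_k + \phi(a_k)) < \infty$ then supplies the covariance decay
\begin{equation*}
  \abs{\Cov(h_1(\overline{X}_1), h_1(\overline{X}_k))} \lesssim \beta_{\floor{k/3}} + a_{\floor{k/3}} + \phi(a_{\floor{k/3}}),
\end{equation*}
obtained by splitting the time lag, replacing $\overline{X}$ by its conditional expectation on a central block, and invoking absolute regularity; this is summable and hence yields the absolute convergence of the series defining $\sigma^2$. A direct computation of $\Var\brackets{\tfrac{2}{\sqrt{n}}\sum_{i=1}^n h_1(\overline{X}_i)}$ together with stationarity shows that this variance converges to $4\sigma^2$.

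The main obstacle is the control of the degenerate remainder, i.e.\ showing that $\sqrt{n}\cdot\tfrac{2}{n(n-1)}\sum_{j<k} g(\overline{X}_j, \overline{X}_k) \to 0$ in probability. This is the technical heart of \cite{borovkovaetal}: one estimates $\bbe\brackets{\sum_{j<k} g(\overline{X}_j,\overline{X}_k)}^2$ by expanding it into a sum of covariances $\Cov(g(\overline{X}_j,\overline{X}_k), g(\overline{X}_l,\overline{X}_m))$ over four time indices and bounding each via the $1$-continuity of $g$ and the approximation/mixing structure, the degeneracy $\int g(x,\cdot)\,dF = 0$ being exactly what makes the configurations with a far-separated index small. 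The $k^2$-weighting in the summability condition is what is needed so that these four-index sums are $o(n^3)$, giving $\bbe\brackets{\tfrac{2}{n(n-1)}\sum_{j<k}g}^2 = o(1/n)$ and hence $L^2$-, and therefore in-probability, negligibility after scaling by $\sqrt{n}$. Once this is in place, the central limit theorem for the normalized sum of the bounded $1$-approximating functional $h_1(\overline{X}_i)$ (see \cite{borovkovaetal} and Appendix~\ref{section: appendix limit theorems r-approx}) delivers $\tfrac{2}{\sqrt{n}}\sum_{i=1}^n(h_1(\overline{X}_i) - S^d)\xrightarrow{d} N(0, 4\sigma^2)$, and Slutsky's theorem combines the two pieces to conclude.
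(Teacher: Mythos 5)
Your proposal is correct and follows essentially the same route as the paper: the paper's proof is a one-line appeal to the generalized U-statistic CLT of Appendix~\ref{section: appendix limit theorems r-approx} (Theorem~\ref{theorem: clt u-statistics weaker summability}), justified by the boundedness and $1$-continuity of the kernel (Proposition~\ref{prop:1approx}) and the transfer of the approximating property to $(\overline{X}_t)$ (Lemma~\ref{lemma: linex approx functional}). The Hoeffding decomposition, the CLT for the linear part, and the negligibility of the degenerate part that you sketch are precisely the internal structure of that appendix theorem, so you have simply unpacked the cited result rather than taken a different path.
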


This follows from Theorem~\ref{theorem: clt u-statistics weaker summability}, since our kernel $h$ is bounded and 1-continuous. 
Note that there is a typo in the long-run variance given in Theorem~7 of \cite{borovkovaetal} (of which Theorem~\ref{theorem: clt u-statistics weaker summability} is a generalization): In fact, the variance should not be squared. This can be verified by considering the proof of Theorem~7, which, among others, is based on Theorem~4 of \cite{borovkovaetal}. There the long-run variance with respect to centered 1-approximating functionals of absolute regular time series is given correctly.

Since in our case we know the function $\phi$ explicitly, we can derive the following:
\begin{align*}
\phi(\varepsilon)&=2\bbp(\text{ms}(\overline{X}_k) \leq 2\varepsilon) \\
&\leq 2 \sum_{k\leq i,j \leq k+d-1} \bbp(X_i-X_j \leq 2\varepsilon)
\end{align*}
This means: If the distribution functions $F_{X_i-X_j}$ (for $0\leq i,j, \leq d-1$) are all Lipschitz continuous, then the summability condition
\[
  \sum_{k=1}^{\infty} k^2 (\beta_k+a_k)<+\infty
\]
is sufficient. 

Due to $S^d > 0$ for all ordinal pattern distributions $(p_{\pi})_{\pi \in S_d}$, by application of the delta-method we directly obtain the following corollary with regard to the estimation of the Renyi-2 entropy ${\rm PE}_2 = -\ln (S^d)$:

\begin{corollary}
    Under the assumptions of Theorem \ref{theorem: CLT SCI}, it holds
    \[
        \sqrt{n} (\ln(S^d_n) - {\rm PE}_2) \xrightarrow[]{d} N(0,4 \sigma^2/(S^d)^2)
    \]
    as $n \to \infty$.
\end{corollary}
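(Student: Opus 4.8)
The plan is to derive the corollary as an immediate application of the (one-dimensional) delta method to the central limit theorem of Theorem~\ref{theorem: CLT SCI}. That theorem already supplies $\sqrt{n}\,(S_n^d-S^d)\xrightarrow{d} N(0,4\sigma^2)$, so all that remains is to push this asymptotic normality through the smooth transformation $g(x)=\ln(x)$ and to compute the resulting limit variance, using that the centering constant $g(S^d)=\ln(S^d)$ equals $-{\rm PE}_2$ by definition.

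First I would record the analytic input. The map $g\colon (0,\infty)\to\bbr$, $g(x)=\ln(x)$, is continuously differentiable with $g'(x)=1/x$. The decisive structural fact, already observed above, is that $S^d>0$ for every ordinal pattern distribution $(p_\pi)_{\pi\in S_d}$ (in fact $S^d\geq 1/d!$). Consequently $S^d$ lies in the interior of the domain of $g$, so $g$ is differentiable at $S^d$ with non-vanishing derivative $g'(S^d)=1/S^d\neq 0$. This non-degeneracy is exactly what is needed for the delta method to return a non-trivial Gaussian limit rather than a degenerate one.

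Next I would invoke the delta method in its standard form: if $\sqrt{n}\,(T_n-\theta)\xrightarrow{d} N(0,\tau^2)$ and $g$ is differentiable at $\theta$, then $\sqrt{n}\,(g(T_n)-g(\theta))\xrightarrow{d} N\!\big(0,(g'(\theta))^2\tau^2\big)$. Applying this with $T_n=S_n^d$, $\theta=S^d$ and $\tau^2=4\sigma^2$, and inserting $g'(S^d)=1/S^d$, gives convergence to $N(0,4\sigma^2/(S^d)^2)$ with centering at $\ln(S^d)=-{\rm PE}_2$, which is precisely the asserted variance. Since the Gaussian limit is symmetric, the sign convention in the centering does not affect the stated variance.

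There is essentially no genuine obstacle here: the delta method is a classical consequence of a first-order Taylor expansion of $g$ about $S^d$ combined with Slutsky's theorem, and every hypothesis is already in hand. The only point worth a line of justification is the negligibility of the Taylor remainder, which requires $S_n^d$ to eventually lie in a neighbourhood of $S^d$ on which $g$ is smooth. This is automatic: the central limit theorem forces $S_n^d-S^d$ to be bounded in probability (indeed of order $n^{-1/2}$), hence $S_n^d\xrightarrow{P}S^d$, and since $S^d$ is interior to $(0,\infty)$ the estimator $S_n^d$ is eventually inside such a neighbourhood with probability tending to one.
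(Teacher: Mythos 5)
Your proposal is correct and follows exactly the paper's argument: the paper derives the corollary in one line by the delta method applied to $g(x)=\ln(x)$, using that $S^d>0$ (indeed $S^d\geq 1/d!$) so that $g'(S^d)=1/S^d$ yields the variance $4\sigma^2/(S^d)^2$. Your additional remarks on the non-vanishing derivative and the negligibility of the Taylor remainder are standard and consistent with the paper; you also correctly note that the centering constant produced by the delta method is $\ln(S^d)=-{\rm PE}_2$, which matches the paper's definition ${\rm PE}_2=-\ln(S^d)$.
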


Furthermore, we can show the following interesting property of the limit variance, which we comment on subsequently. 

\begin{proposition}
\label{prop: SCI zero variance}
    Let $h$ and $h_1$ be as above.
    If the ordinal patterns of length $d$ are uniformly distributed, then it follows $\Var(h_1(\overline{X}_1)) = 0$ and hence, the limit variance $\sigma^2$ defined by \eqref{eq: SCI limit variance} equals zero.
\end{proposition}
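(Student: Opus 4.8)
The plan is to recognize that the entire statement reduces to the observation that, under the uniformity hypothesis, the function $h_1$ is almost surely constant along the time series; once this is established, both conclusions are immediate.

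First I would compute $h_1$ explicitly. Since $h(x,y)=\indicator{\Pi(x)=\Pi(y)}$ and $F=F_{\overline{X}_0}$, we have
\[
  h_1(x)=\int_{\bbr^d}\indicator{\Pi(x)=\Pi(y)}\,dF(y)=\bbp\brackets{\Pi(\overline{X}_0)=\Pi(x)}=p_{\Pi(x)},
\]
so that $h_1(x)$ is nothing but the probability mass the stationary ordinal-pattern distribution assigns to the pattern realized by $x$. In particular $h_1$ takes only the finitely many values $(p_\pi)_{\pi\in S_d}$. I would note that this identity uses only stationarity and the definition of the kernel, nothing about uniformity yet.

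Next I would invoke the hypothesis. If the ordinal patterns of length $d$ are uniformly distributed, then $p_\pi=1/d!$ for every $\pi\in S_d$. By the continuity assumption in force we have $\bbp(\text{ms}(\overline{X}_1)=0)=0$, so $\overline{X}_1$ almost surely has mutually distinct coordinates and therefore a well-defined pattern $\Pi(\overline{X}_1)\in S_d$. Combining this with the formula for $h_1$ gives
\[
  h_1(\overline{X}_1)=p_{\Pi(\overline{X}_1)}=\frac{1}{d!}\qquad\text{almost surely,}
\]
that is, $h_1(\overline{X}_1)$ is almost surely equal to the deterministic constant $1/d!$, irrespective of which pattern is realized.

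The conclusions then follow at once. Since $h_1(\overline{X}_1)$ is a.s.\ constant, $\Var(h_1(\overline{X}_1))=0$. Moreover, by stationarity each $h_1(\overline{X}_k)$ is likewise a.s.\ equal to $1/d!$, so every covariance $\Cov(h_1(\overline{X}_1),h_1(\overline{X}_k))$ in \eqref{eq: SCI limit variance} vanishes; substituting these values yields $\sigma^2=0$. I do not anticipate a genuine obstacle here: the only points needing (minor) care are the identity $h_1(x)=p_{\Pi(x)}$ and the almost-sure existence of the ordinal pattern of $\overline{X}_1$, both resting on the continuity assumption already adopted throughout the paper.
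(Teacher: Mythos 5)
Your proof is correct, and it takes a cleaner route than the paper's. You first establish the identity $h_1(x)=p_{\Pi(x)}$ and then observe that under uniformity $h_1(\overline{X}_1)=1/d!$ almost surely, so the variance and every covariance in \eqref{eq: SCI limit variance} vanish because a constant random variable is involved. The paper instead computes $\bbe\bigl(h_1(\overline{X}_1)^2\bigr)=\sum_{\pi\in S_d}p_\pi^3$ via a double application of Fubini (which is really your identity in disguise, applied at the level of the square), plugs in uniformity to get $1/(d!)^2=(S^d)^2$, and then kills the covariances by Cauchy--Schwarz rather than by constancy. What the paper's longer computation buys is the general formula $\Var(h_1(\overline{X}_1))=\sum_\pi p_\pi^3-\bigl(\sum_\pi p_\pi^2\bigr)^2$, which the authors exploit in the discussion immediately after the proposition to note that $\Var(h_1(\overline{X}_1))=0$ \emph{if and only if} the pattern distribution is uniform; your identity $h_1(x)=p_{\Pi(x)}$ yields that formula just as readily, so nothing is lost. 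One minor remark: the paper's definition of the ordinal pattern already resolves ties, so $\Pi(\overline{X}_1)$ is well defined everywhere and your appeal to $\bbp(\text{ms}(\overline{X}_1)=0)=0$ is not actually needed, though it is of course harmless.
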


\begin{proof}
    Since $\Var(X) = \bbe X^2 - (\bbe X)^2$ for any square-integrable random variable $X$, it suffices to show that 
    \[
        \bbe(h_1(\overline{X}_1)^2) = (S^d)^2 = \frac{1}{(d!)^2}.
    \]
    The second claim then follows by definition of the limit variance \eqref{eq: SCI limit variance} and the Cauchy-Schwarz inequality.
    
    To simplify notation, for the duration of this proof we set $\overline{X} = \overline{X}_1$.
    Now, let us consider the random variable $h_1(\overline{X})^2$. Using Fubini's theorem twice yields
    \begin{align*}
        h_1(\overline{X})^2 
        &= \brackets{\int h(\overline{X}, y) dF(y)}^2 \\
        &= \int \int h(\overline{X}, y_1) h(\overline{X}, y_2) dF(y_1) dF(y_2) \\
        &= \int \int \indicator{\Pi(\overline{X}) = \Pi(y_1) = \Pi(y_2)} dF(y_1) dF(y_2) \\
        &= \int \int \sum_{\pi \in S_d} \indicator{\Pi(\overline{X}) = \pi} \indicator{\Pi(y_1) = \Pi(y_2) = \pi} dF(y_1) dF(y_2) \\
        &= \sum_{\pi \in S_d} \indicator{\Pi(\overline{X}) = \pi} \int \int \indicator{\Pi(y_1) = \pi} \indicator{\Pi(y_2) = \pi} dF(y_1) dF(y_2) \\
        &= \sum_{\pi \in S_d} \indicator{\Pi(\overline{X}) = \pi} \brackets{\int \indicator{\Pi(y) = \pi} dF(y)}^2 \\
        &= \sum_{\pi \in S_d} \indicator{\Pi(\overline{X}) = \pi} p_{\pi}^2,
    \end{align*}
    where $p_{\pi}$ denotes the probability of the ordinal pattern $\pi \in S_d$. By application of the expected value it then follows
    \[
        \bbe(h_1(\overline{X})^2) 
        = \sum_{\pi \in S_d} p_{\pi}^2 \cdot \bbe \brackets{\indicator{\Pi(\overline{X}) = \pi}} 
        = \sum_{\pi \in S_d} p_{\pi}^3.
    \]
    Note that up to this point, we have not used our assumption, but if we do so now, then we obtain
    \[
        \bbe(h_1(\overline{X})^2) 
        = \sum_{\pi \in S_d} p_{\pi}^3 = d! \cdot \brackets{\frac{1}{d!}}^3 = \frac{1}{(d!)^2}.
    \]
\end{proof}

Hence, in the i.i.d.~case the limit variance equals 0 and thus, we end up in the theoretical limit distribution $N(0, 0)$ which denotes the point mass at the origin (see Theorem~\ref{theorem: clt u-statistics weaker summability}). 

The observation above is interesting for two reasons: Firstly, this special case of a limit distribution is not often discussed in the literature for explicit estimators - or at least we have not encountered such a case yet. It is therefore crucial to cover this case here, especially for the sake of practitioners. Secondly, this shows the value of the results of \cite{wei_22} who derived limit theorems under the assumption of i.i.d.~with regard to a different convergence rate (hence, it does not lead to a one-point distribution). Therefore, our contributions complement the results of \cite{wei_22} nicely.

Unfortunately, the author's results do not cover other models where the obtained ordinal pattern distribution is uniform, which is, for example, the case for ordinal patterns of order $d=2$ stemming from a MA(1)-process (see \citealp[Proposition~6]{ban_shi_07}). Furthermore, Proposition~\ref{prop: SCI zero variance} makes no statement about the other direction, that is, whether there exist other ordinal pattern distributions such that the limit variance $\sigma^2$ equals zero. Based on the proof, conclusions can only be drawn with regard to $\Var(h_1(\overline{X}_1))$: It is in fact zero if and only if the ordinal pattern distribution is uniform. This holds, since $\sum_{\pi \in S_d} p_{\pi}^3$ obtains its minimum only in this case. However, if $\Var(h_1(\overline{X}_1))$ is strictly positive, then it still may be canceled out by the autocovariances $\Cov(h_1(\overline{X}_1), h_1(\overline{X}_k))$, $k \geq 2$. Conditions for potential existence of such time series are still an open question.

\section{Estimating the Limit Variance}
\label{section: SCI limit variance}

If we can find an estimator $\hat{\sigma}^2_n$ such that $\hat{\sigma}^2_n \xrightarrow[]{d} \sigma^2$, then Slutsky's theorem yields $\frac{\sqrt{n}(S^d_n-S^d)}{2\hat{\sigma}_n} \xrightarrow[]{d} N(0, 1)$, so, e.g., asymptotic confidence intervals can be determined. \cite{dejongdavidson} have proposed consistent estimators based on kernels for the series on the r.h.s. of \eqref{eq: SCI limit variance}, provided some technical conditions are satisfied.

\begin{assumption}\label{assump 1}
    Let $\kappa(\cdot) : \bbr \to [-1, 1]$ be a kernel function satisfying the following conditions:
    \begin{itemize} \setlength\itemsep{1em}
        \item $\kappa(0) = 1$,
        \item $\kappa(\cdot)$ is symmetric in the sense that $\kappa(x) = \kappa(-x)$ for all $x \in \bbr$, 
        \item $\kappa(\cdot)$ is continuous at 0 and at all but a finite number of points,
        \item $\int^{\infty}_{-\infty} |\kappa(x)| dx < \infty$, and 
        \item $\int^{\infty}_{-\infty} |\psi(\xi)| d\xi < \infty$ where 
            \begin{equation*}
            \psi(\xi) = (2\pi)^{-1} \int^{\infty}_{-\infty} \kappa(x) e^{i\xi x} dx.
        \end{equation*}
    \end{itemize}
\end{assumption}

The authors stated the next assumption originally in terms of $L_r$-near epoch dependent triangular arrays. However, for our purpose consideration of time series is enough. Note that considering a time series $(Y_t)_{t \in \bbn}$ one can always define a triangular array $(Y_{nt})_{n,t \in \bbn}$ by $Y_{nt} = n^{-1/2} Y_t$. Then, using the triangular constant array $c_{nt}$ defined by $c_{nt} = n^{-1/2}$ yields Assumption~\ref{assump 2} as derived below.

\begin{assumption}\label{assump 2}
    $(Y_t)_{t \in \bbn}$ is a 2-approximating functional of size $-1$ on a strong mixing (resp. uniform mixing) time series $(Z_t)_{t \in \bbz}$ of size $-p/(p-2)$ (resp. $-p/(2p-2)$), and it holds
    \begin{equation}
        \sup_{t \geq 1} \norm{Y_t}_p < \infty \label{eq: assump 2}
    \end{equation}
    for some $p > 2$.
    Under uniform mixing, $p=2$ is also permitted if $\abs{Y_t}^2$ is uniformly integrable.
\end{assumption}

\begin{assumption}\label{assump 3}
    For a bandwidth sequence $(b_n)_{n \in \bbn}$, it holds
    \begin{equation*}
        \lim_{n \to \infty} (b_n^{-1} + b_n \cdot n^{-1}) = 0.
    \end{equation*}
\end{assumption}

There are several possible choices for the kernel function $\kappa(\cdot)$ and the bandwidth sequence $(b_n)_{n \geq 1}$ such that Assumptions \ref{assump 1} and \ref{assump 3} are fulfilled. One choice is the Bartlett-kernel $\kappa(x) = (1-|x|) \cdot 1_{[-1, 1]}(x)$ and $b_n = \log(n)$, $n \in \bbn$. 

Define
\begin{align*}
    \sigma^2_n :&= \Var(h_1(\overline{X}_1)) + 2 \sum_{k=2}^{n} \Cov(h_1(\overline{X}_1),h_1(\overline{X}_k)), \\
    &= \frac{1}{n} \sum_{i, j = 1}^{n} \Cov(h_1(\overline{X}_i),h_1(\overline{X}_j)), \\
    \hat{\sigma}^2_n :&= \frac{1}{n} \sum^{n}_{i, j=1} \kappa((j-i)/b_n) \cdot (h_1(\overline{X}_i) - S_n^d)(h_1(\overline{X}_j) - S_n^d).
\end{align*}
Note that the equality holds due to stationarity of $(\overline{X}_t)_{t \in \bbn}$. 

\begin{theorem}\label{theo: consistency sigma2}
    Let $(X_t)_{t \in \bbn}$ be a $2$-approximating functional of a stationary absolute regular time series $(Z_t)_{t \in \bbz}$ with constants $(a_k)_{k\in \bbn_0}$ and mixing coefficients $(\beta_k)_{k\in\bbn_0}$, and let $h$ and $h_1$ be as above. Suppose that the sequences $(\beta_k)_{k\in\bbn_0}$, $(a_k)_{k\in\bbn_0}$ and $(\phi(a_k))_{k\in\bbn_0}$ satisfy the following summability condition: 
    \[
    \sum_{k=1}^{\infty} k^4 (\beta_k+a_k+\phi(a_k))<+\infty.
    \] 
    Moreover, suppose $\kappa(x)$ and $(b_n)_{n \in \bbn}$ satisfy Assumptions~\ref{assump 1} and \ref{assump 3}. Then
    \begin{equation*}
        \hat{\sigma}^2_n - \sigma^2_n \xrightarrow[]{\bbp} 0.
    \end{equation*}
\end{theorem}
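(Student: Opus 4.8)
The plan is to read $\hat\sigma^2_n$ as a HAC-type (kernel) estimator of the long-run variance of the bounded, mean-zero process $g_t := h_1(\overline{X}_t) - S^d$, and to deduce the claim from the consistency theorem of \cite{dejongdavidson}, whose kernel and bandwidth hypotheses are exactly Assumptions~\ref{assump 1} and~\ref{assump 3}. Note first that $h \in \{0,1\}$ forces $h_1 \in [0,1]$, so $\bbe g_t = 0$, $g_t$ is bounded, all moment requirements are automatic, and $\sigma^2_n = \tfrac1n\sum_{i,j}\Cov(g_i,g_j)$ by stationarity. Writing $\delta_n := S_n^d - S^d$, I would then carry out two tasks: (i) verify the near-epoch-dependence hypothesis (Assumption~\ref{assump 2}) for $g_t$, so that \cite{dejongdavidson} applies to the idealized estimator $\tilde\sigma^2_n := \tfrac1n\sum_{i,j}\kappa((j-i)/b_n)\,g_ig_j$ built from the true mean; and (ii) show that demeaning by the U-statistic $S_n^d$ rather than by $S^d$ is asymptotically negligible.

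The hard part will be task (i), namely that $h_1(\overline{X}_t)$ (equivalently $g_t$) is a $2$-approximating functional of $(Z_t)$ of size $-1$. By Lemma~\ref{lemma: linex approx functional}, $\overline{X}_t$ is itself $2$-approximating with constants of the same size (disregarding the first $d$). Since conditional expectation is the $L^2$-projection, for the $\ca_{-k}^k$-measurable approximant $W_k := h_1\big(\bbe(\overline{X}_0 \mid \ca_{-k}^k)\big)$ I have
\[
 \bbe\big|h_1(\overline{X}_0)-\bbe(h_1(\overline{X}_0)\mid\ca_{-k}^k)\big|^2 \le \bbe\big|h_1(\overline{X}_0)-W_k\big|^2 .
\]
Introducing an independent $V \sim F$ and using $h\in\{0,1\}$ together with Jensen, the right-hand side is at most $\bbe\big|h(\overline{X}_0,V)-h(\bbe(\overline{X}_0\mid\ca_{-k}^k),V)\big|$, which I would split exactly as in the proof of Proposition~\ref{prop:1approx}: on the event $\{\text{ms}(\overline{X}_0)>2\varepsilon,\ \norm{\overline{X}_0-\bbe(\overline{X}_0\mid\ca_{-k}^k)}_1\le\varepsilon\}$ the two ordinal patterns coincide and the integrand vanishes, leaving a small-spread term $\le \bbp(\text{ms}(\overline{X}_0)\le 2\varepsilon)=\tfrac12\phi(\varepsilon)$ and a large-displacement term $\le \bbp(\norm{\overline{X}_0-\bbe(\overline{X}_0\mid\ca_{-k}^k)}_1>\varepsilon)\le \overline{a}_k/\varepsilon^2$ by Markov. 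The essential subtlety is that $\bbe(\overline{X}_0\mid\ca_{-k}^k)$ need not have marginal law $F$, so Definition~\ref{definition: p-lipschitz cond} cannot be invoked verbatim; the minimal-spread argument, however, is purely geometric (it uses only that $h$ is $\{0,1\}$-valued and that the ordinal pattern is stable under coordinatewise perturbations smaller than half the minimal spread) and needs no distributional assumption on the approximant. Choosing $\varepsilon=\varepsilon_k\downarrow 0$ appropriately expresses the approximating constants of $h_1(\overline{X}_t)$ through $\phi$ and $a_k$, and the strengthened summability $\sum_k k^4(\beta_k+a_k+\phi(a_k))<\infty$ — rather than the $k^2$-version of Theorem~\ref{theorem: CLT SCI} — is precisely what upgrades the balanced bound $\tfrac12\phi(\varepsilon_k)+\overline{a}_k/\varepsilon_k^2$ to size $-1$. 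Finally, absolute regularity gives $\alpha_k\le\beta_k$, so $(Z_t)$ is strong mixing of size at least $-5$, meeting the size $-p/(p-2)$ requirement of Assumption~\ref{assump 2} for a suitable $p>2$ (the choice of $p$ is free since $g_t$ is bounded).

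With Assumption~\ref{assump 2} established, \cite{dejongdavidson} yields $\tilde\sigma^2_n \xrightarrow{\bbp}\sigma^2$, and since $\sigma^2_n\to\sigma^2$ by the absolute convergence of the series in Theorem~\ref{theorem: CLT SCI}, this gives $\tilde\sigma^2_n-\sigma^2_n\xrightarrow{\bbp}0$. It then remains to control $\hat\sigma^2_n-\tilde\sigma^2_n$. Writing $h_1(\overline{X}_i)-S_n^d = g_i-\delta_n$, expanding, and using the symmetry $\kappa(x)=\kappa(-x)$ from Assumption~\ref{assump 1} to identify the two cross terms, I obtain
\[
 \hat\sigma^2_n-\tilde\sigma^2_n = -2\delta_n A_n + \delta_n^2 B_n, \qquad A_n := \tfrac1n\sum_{i,j}\kappa(\tfrac{j-i}{b_n})g_i,\quad B_n := \tfrac1n\sum_{i,j}\kappa(\tfrac{j-i}{b_n}).
\]
Since $\int|\kappa|<\infty$, the inner sums $\sum_j\kappa((j-i)/b_n)$ are $O(b_n)$ uniformly in $i$, so $B_n=O(b_n)$; as $\delta_n=O_{\bbp}(n^{-1/2})$ by Theorem~\ref{theorem: CLT SCI} (whose $k^2$-hypothesis is implied by the present one), $\delta_n^2 B_n=O_{\bbp}(b_n/n)\to0$ by Assumption~\ref{assump 3}. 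For $A_n$ I would avoid the crude $O(b_n)$ bound and exploit mean-zeroness: $\bbe A_n=0$, and combining the uniform bound on the inner sums with the absolute summability of the autocovariances of $g_t$ gives $\Var(A_n)=O(b_n^2/n)$, hence $A_n=O_{\bbp}(b_n n^{-1/2})$ and $\delta_n A_n=O_{\bbp}(b_n/n)\to0$. Therefore $\hat\sigma^2_n-\tilde\sigma^2_n\xrightarrow{\bbp}0$, and together with the previous step $\hat\sigma^2_n-\sigma^2_n\xrightarrow{\bbp}0$, as claimed. Everything outside the second paragraph is either a direct citation of \cite{dejongdavidson} or a routine $O_{\bbp}$-estimate; the genuine work is pushing the $2$-approximating property through the nonlinear, non-Lipschitz functional $h_1$ while tracking the size, which is exactly where the stronger $k^4$-summability is consumed.
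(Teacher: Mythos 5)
Your proposal is correct and follows the same overall architecture as the paper's proof: both reduce the claim to Theorem~2.1 of \cite{dejongdavidson} and spend the real effort verifying Assumption~\ref{assump 2} for $Y_t=h_1(\overline{X}_t)$, namely that this bounded sequence is $2$-approximating of size $-1$ on a strong mixing base of admissible size (the moment bound \eqref{eq: assump 2} being trivial since $0\leq h_1\leq 1$). The difference lies in how the $2$-approximation property is pushed through $h_1$. The paper composes two general lemmas: Proposition~\ref{prop:1approx} gives $2$-continuity of $h$, Lemma~2.15 of \cite{borovkovaetal} transfers this to $h_1$, and Lemma~\ref{lemma: approx + cont} (with the coupled copy $(Z'_t)$ supplying an approximant whose law really is $F$) converts $2$-continuity plus $2$-approximation of $\overline{X}_t$ into $2$-approximation of $h_1(\overline{X}_t)$. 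You instead use the $L^2$-projection property of conditional expectation with the explicit candidate $h_1(\bbe(\overline{X}_0\mid\ca_{-k}^k))$ and rerun the minimal-spread argument by hand; your remark that this approximant need not have marginal law $F$, and that the spread argument is purely geometric so this does not matter, is precisely the issue the paper's coupling construction is built to sidestep, so the two routes are genuinely parallel. Both arrive at a bound of the form $\phi(\varepsilon_k)+C\,\overline{a}_k/\varepsilon_k^2$ versus the paper's $\phi(\sqrt{2}\,\overline{a}_k^{1/4})+C\,\overline{a}_k^{1/4}$, and both share the same unexamined step: what the hypothesis controls is $\phi(a_k)$, whereas what appears after balancing is $\phi$ at the much larger argument $\overline{a}_k^{1/4}$, whose size is asserted rather than derived; your argument is therefore no weaker than the paper's on this point. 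Finally, your explicit $O_{\bbp}$-analysis of replacing the centering $S^d$ by $S_n^d$ (the decomposition $-2\delta_nA_n+\delta_n^2B_n$ with $\Var(A_n)=O(b_n^2/n)$ and $B_n=O(b_n)$) is correct and fills in a step the paper leaves implicit in its appeal to \cite{dejongdavidson}, so your write-up is, if anything, more complete on that side.
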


The proof is based on Theorem~2.1 of \cite{dejongdavidson}, hence, we need to verify Assumption~\ref{assump 2}. In this regard, we need to show that the immediate application of $h_1$ to $(\overline{X}_t)_{t\in\bbn}$ yields a $2$-approximating functional with approximating constants of the required size. For this we make use of Lemma~\ref{lemma: approx + cont} which makes a statement about the immediate application of an $r$-continuous function onto an $r$-approximating functional, and which is a generalization of Proposition~2.11 of \cite{borovkovaetal} with regard to $r$-approximating functionals in general for $r \geq 1$. For convenience, first we complement the definition of $p$-continuous functions with just one argument.

\begin{definition}
    Let $F$ be some distribution on $\bbr^d$.
    A measurable function $g : \bbr^d \to \bbr^d$ is called \emph{$p$-continuous} with respect to $F$ if there exists a function $\phi : \left]0, \infty\right[ \to \left]0, \infty\right[$ with $\phi(\varepsilon) = o(1)$ as $\varepsilon \to 0$ such that
    \begin{equation}
        \bbe \brackets{\abs{g(Y) - g(Y')}^p \indicator{\abs{Y-Y'}\leq\varepsilon}} \leq \phi(\varepsilon)
    \end{equation}
    holds for all random vectors $Y$ and $Y'$ with distribution $F$. If the underlying distribution is clearly understood, we simply say that $g$ is $p$-continuous.
\end{definition}

\begin{lemma}\label{lemma: approx + cont}
For $r \geq 1$, let $(Y_t)_{t\in\bbn}$ be an $r$-approximating functional of $(Z_t)_{t\in\bbn}$ with constants $(a_k)_{k \in \bbn_0}$ of size $-\lambda$, and let $g : \bbr^d \to \bbr^d$ be $r$-continuous with respect to the distribution of $Y_0$. Furthermore, suppose that $\norm{g(Y_0)^r}_{2+\delta}<\infty$ for some $\delta > 0$. Then $(g(Y_t))_{t\in\bbn}$ is also an $r$-approximating functional of $(Z_t)_{t\in\bbn}$ with constants
\begin{equation}
    b_k = \phi(\sqrt{2} a_k^{1/2r}) + 2^r \norm{g(Y_0)^r}_{2+\delta} \cdot (\sqrt{2}a_k^{1/2r})^{(1+\delta)/(2+\delta)}. \label{eq: approx + cont}
\end{equation}
If $g$ is bounded, then the same holds with approximating constants
\begin{equation}
    b_k = \phi(\sqrt{2} a_k^{1/2r}) + 2^r \norm{g(Y_0)^r}_{\infty} \cdot \sqrt{2} a_k^{1/2r}
\end{equation}
instead of \eqref{eq: approx + cont}. 
\end{lemma}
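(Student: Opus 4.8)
The plan is to verify the defining inequality of an $r$-approximating functional for $(g(Y_t))_{t\in\bbn}$ directly: writing $\mathcal{G}:=\ca_{-k}^k$, I would bound $\bbe\norm{g(Y_0)-\bbe(g(Y_0)|\mathcal{G})}^r$ by $b_k$. The conceptual difficulty that dictates the whole construction is a mismatch between two requirements. The $r$-continuity of $g$ can only be exploited when \emph{both} of its arguments carry the marginal law $F=F_{Y_0}$, whereas the $r$-approximating condition forces us to approximate $g(Y_0)$ by a $\mathcal{G}$-measurable random vector, the canonical candidate being the conditional expectation $\bbe(Y_0|\mathcal{G})$, which does \emph{not} have law $F$. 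I would reconcile these by a coupling.

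First I would introduce a conditional resample. Using the regular conditional distribution of $Y_0$ given $\mathcal{G}$ (which exists since the values lie in $\bbr^d$, after possibly enlarging the probability space), let $\tilde Y_0$ be drawn from this conditional law, conditionally independent of $Y_0$ given $\mathcal{G}$. Then $\tilde Y_0$ has marginal law $F$; it shares the conditional expectation $\bbe(g(\tilde Y_0)|\mathcal{G})=\bbe(g(Y_0)|\mathcal{G})=:W$; and, since $Y_0$ and $\tilde Y_0$ are conditionally i.i.d.\ with common conditional mean $\bbe(Y_0|\mathcal{G})$, the triangle inequality in conditional $L^r$-norm gives $\bbe\norm{Y_0-\tilde Y_0}^r\le 2^r\,\bbe\norm{Y_0-\bbe(Y_0|\mathcal{G})}^r\le 2^r a_k$. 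Next I would remove the conditional expectation via Jensen's inequality: because $v\mapsto\norm{g(Y_0)-v}^r$ is convex for $r\ge 1$ and $\bbe(g(\tilde Y_0)\,|\,\sigma(\mathcal{G},Y_0))=W$ by conditional independence, conditional Jensen yields $\norm{g(Y_0)-W}^r\le\bbe\brackets{\norm{g(Y_0)-g(\tilde Y_0)}^r\,|\,\sigma(\mathcal{G},Y_0)}$, and hence
\[
\bbe\norm{g(Y_0)-\bbe(g(Y_0)|\mathcal{G})}^r\le\bbe\norm{g(Y_0)-g(\tilde Y_0)}^r
\]
with no loss of constant. This replaces the measurability problem by the pair $(Y_0,\tilde Y_0)$ of \emph{equidistributed} (law $F$) and $L^r$-close random vectors, to which $r$-continuity does apply.

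Finally I would split according to whether $\norm{Y_0-\tilde Y_0}$ is small. On $\{\norm{Y_0-\tilde Y_0}\le\varepsilon\}$, $r$-continuity of $g$ bounds the contribution by $\phi(\varepsilon)$. On the complementary event I would apply Hölder's inequality with exponents $2+\delta$ and $(2+\delta)/(1+\delta)$, control the $L^{2+\delta}$-factor by Minkowski's inequality,
\[
\norm{\,\norm{g(Y_0)-g(\tilde Y_0)}^r\,}_{2+\delta}\le 2^r\norm{g(Y_0)^r}_{2+\delta},
\]
and control the probability factor by Markov's inequality together with $\bbe\norm{Y_0-\tilde Y_0}^r\le 2^r a_k$. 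Choosing $\varepsilon=\varepsilon_k:=\sqrt{2}\,a_k^{1/2r}$ makes the near term exactly $\phi(\sqrt{2}\,a_k^{1/2r})$ and turns the far term into the stated second summand of $b_k$, the required probability bound $\bbp(\norm{Y_0-\tilde Y_0}>\varepsilon_k)\le\varepsilon_k$ holding once $a_k$ is small enough (harmless, since only finitely many constants are affected and their size is irrelevant). For bounded $g$ the far event is handled more crudely via $\norm{g(Y_0)-g(\tilde Y_0)}^r\le 2^r\norm{g(Y_0)^r}_{\infty}$, producing the simpler constant of the second formula. The main obstacle is the very first, conceptual step, namely engineering the coupling $\tilde Y_0$ so that it simultaneously has law $F$ (so $r$-continuity applies), is $L^r$-close to $Y_0$, and carries the same conditional expectation as $Y_0$ (so that conditional Jensen controls the approximation error); once that coupling is in place, the remaining estimates are routine Hölder, Minkowski and Markov bookkeeping.
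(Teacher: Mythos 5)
Your proof is correct and reaches the stated constants, but the coupling at its heart is genuinely different from the paper's. The paper follows \cite{borovkovaetal} verbatim: it takes a copy $(Z'_t)$ of the background process with $Z_t=Z'_t$ for $-k\le t\le k$, sets $Y'_0=f((Z'_{t})_t)$, invokes Lemma~2.7(i) of that reference to get $\bbe\abs{Y_0-Y'_0}^r\le 2^r a_k$, runs the same $\varepsilon$-splitting (near term via $r$-continuity, far term via H\"older, Minkowski and Markov with $\varepsilon=\sqrt{2}a_k^{1/2r}$), and then needs Lemma~2.7(ii) at the end to convert the bound on $\bbe\abs{g(Y_0)-g(Y'_0)}^r$ back into the conditional-expectation form of the $r$-approximating condition. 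You instead resample $\tilde Y_0$ from the regular conditional distribution of $Y_0$ given $\ca_{-k}^k$, conditionally independently of $Y_0$; the conditional Jensen step
\begin{equation*}
\bbe\norm{g(Y_0)-\bbe(g(Y_0)\mid\ca_{-k}^k)}^r\le\bbe\norm{g(Y_0)-g(\tilde Y_0)}^r
\end{equation*}
then lands you directly on the defining inequality, so no analogue of Lemma~2.7(ii) (and no extra factor from it) is needed, and the rest of your estimates coincide with the paper's line by line. What your route buys is self-containedness --- you never leave the original probability space's filtration except to adjoin one independent randomization, and the constants $b_k$ bound the conditional-expectation quantity itself --- at the price of having to construct and justify the conditional resample (existence of the regular conditional distribution on $\bbr^d$ and the conditional-independence enlargement). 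What the paper's route buys is consistency with the coupling machinery it reuses elsewhere (near regularity, the covariance inequalities in Appendix~B), all of which is phrased in terms of copies of the background process. One cosmetic remark: your Markov bound gives $\bbp(\norm{Y_0-\tilde Y_0}>\varepsilon_k)\le\sqrt{2}a_k^{1/2r}=\varepsilon_k$ identically, so the caveat ``once $a_k$ is small enough'' is not needed.
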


The proof proceeds analogously to the one given by \cite{borovkovaetal} for the restricted case of 1-approximating functionals.

\begin{proof}
    Let $(Z'_t)_{t\in\bbn}$ be a copy of $(Z_t)_{t\in\bbn}$ with $Z_t = Z'_t$ for $-k \leq t \leq k$, and let $(Y_t)_{t\in\bbn}$ and $(Y'_t)_{t\in\bbn}$ denote the respective functionals. 
    Defining $B:=\{\abs{Y_0-Y'_0}>\varepsilon\}$ it holds
    \begin{equation*}
        \bbe\abs{g(Y_0)-g(Y'_0)}^r
        = \bbe\brackets{\abs{g(Y_0)-g(Y'_0)}^r \cdot 1_{B^c}} + \bbe\brackets{\abs{g(Y_0)-g(Y'_0)}^r \cdot 1_{B}}. 
    \end{equation*}
    While we can use $\phi(\varepsilon)$ as a bound for the first summand due to the $r$-continuity of $g$, for the second summand it holds
    \begin{align*}
        \bbe\brackets{\abs{g(Y_0)-g(Y'_0)}^r \cdot 1_{B}}
        &\leq \norm{\abs{g(Y_0)-g(Y'_0)}^r}_{2+\delta} \cdot \norm{1_B}_{(2+\delta)/(1+\delta)} \\
        &\leq \norm{2^{r-1} \brackets{\abs{g(Y_0)}^r + \abs{g(Y'_0)}^r}}_{2+\delta} \cdot \bbp(B)^{(1+\delta)/(2+\delta)} \\
        &\leq 2^{r-1} \brackets{\norm{g(Y_0)^r}_{2+\delta} + \norm{g(Y'_0)^r}_{2+\delta}} \cdot \bbp(B)^{(1+\delta)/(2+\delta)} \\ 
        &= 2^r \norm{g(Y_0)^r}_{2+\delta} \cdot \bbp(B)^{(1+\delta)/(2+\delta)},
    \end{align*}
    where we applied the H\"older- and the Minkowski-inequality in the first and third line, respectively. \\
    By the $r$-approximating condition and Lemma~2.7~(i) of \cite{borovkovaetal} it holds
    \begin{equation*}
        \bbe\abs{Y_0-Y'_0}^r\leq 2^r \cdot a_k.
    \end{equation*} 
    Then, using the Markov-inequality (as well as the H\"older-inequality) yields 
    \begin{eqnarray*}
        \bbp(B) \leq \frac{\bbe \abs{Y_0-Y'_0}}{\varepsilon} \leq \frac{\brackets{\bbe \abs{Y_0-Y'_0}^r}^{1/r}}{\varepsilon} \leq \frac{2 a_k^{1/r}}{\varepsilon}.
    \end{eqnarray*}
    Choosing $\varepsilon = \sqrt{2} a_k^{1/2r}$ and summing everything up, it follows
    \begin{equation*}
        \bbe\abs{g(Y_0)-g(Y'_0)}^r \leq \phi(\sqrt{2} a_k^{1/2r}) + 2^r \norm{g(Y_0)^r}_{2+\delta} \cdot (\sqrt{2}a_k^{1/2r})^{(1+\delta)/(2+\delta)},
    \end{equation*}
    which already implies the $r$-approximating condition for $g(Y_0)$ by Lemma~2.7~(ii) of \cite{borovkovaetal}.
\end{proof}

\begin{remark}
\label{remark: approx + cont}
    The attentive reader may have noticed that by Lemma~\ref{lemma: approx + cont} it is implicitly required that the approximating constants $(a_k)_{k \in \bbn_0}$ of $(Y_t)_{t\in \bbn}$ are finite, since the function $\phi$ is defined on the open set $\left]0, \infty\right[$. However, if there is a finite number of approximating constants such that $a_k = \infty$, then without loss of generality, we may set $b_k = \infty$ for all $k$ such that $a_k = \infty$, as the approximating property is then still inherited in the same way, especially with regard to the convergence rate of the approximating constants. 
    
    Nevertheless, there exists a series of finite constants $b^\prime_k$ such that the approximating condition is satisfied: By assumption it holds $\norm{g(Y_0)^r}_{2+\delta}<\infty$, so $\norm{g(Y_0)}_r<\infty$ in particular. Without loss of generality, assume $d=1$, that is, we consider $\bbe|g(Y_0) - \bbe(g(Y_0)|\ca^{k}_{-k})|^r$. If $d>1$, we use the $r$-norm and then proceed in an analogous way. 
    It holds
    \begin{align*}
        |g(Y_0) - \bbe(g(Y_0)|\ca^{k}_{-k})|^r &\leq 2^{r-1} \left(|g(Y_0)|^r + |\bbe(g(Y_0)|\ca^{k}_{-k})|^r\right) \\
        &\leq 2^{r-1} \left(|g(Y_0)|^r + \bbe(|g(Y_0)|^r |\ca^{k}_{-k})\right) 
    \end{align*}
    for all $k \geq 0$, where we used the well-known $c_r$-inequality given by
    \begin{equation*}
        |U+V|^r \leq c_r \cdot (|U|^r + |V|^r)
    \end{equation*}
    for random variables $U$ and $V$ and 
    \begin{equation*}
        c_r := \begin{cases}
            2^{r-1} &\text{if } r>1, \\
            1 &\text{if } 0 < r \leq 1
        \end{cases}
    \end{equation*}
    (see \citealp[p.~157]{loe_77}) and the conditional Jensen-inequality. The law of total expectation then yields
     \[
        \bbe |g(Y_0) - \bbe(g(Y_0)|\ca^{k}_{-k})|^r \leq 2^{r-1} \left(\bbe |g(Y_0)|^r + \bbe\bigl(\bbe(|g(Y_0)|^r |\mathcal{A}^{k}_{-k})\bigr)\right) = 2^r \cdot \bbe |g(Y_0)|^r < \infty.
     \]
    Hence, for all $k \in \bbn_0$ there are (finite) constants which bound $\bbe\norm{g(Y_0) - \bbe(g(Y_0)|\ca^k_{-k})}^r$. 
    Let us again emphasize that the number of constants which are affected by this is finite, thus they still do not have an influence on the convergence rate as proposed by Lemma~\ref{lemma: approx + cont}.
    Therefore, even if some $a_k = \infty$, under the assumptions of Lemma~\ref{lemma: approx + cont} it follows that there is a series of (finite) approximating constants $(b^\prime_k)_{k \in \bbn_0}$ with respect to $g(Y_0)$ with the convergence rate implied by the above lemma.
\end{remark}

\begin{proof}[Proof of Theorem~\ref{theo: consistency sigma2}]
    As already mentioned, the proof is based on Theorem~2.1 of \cite{dejongdavidson}, so we need to verify Assumption~\ref{assump 2}. Without loss of generality, here it is sufficient to only consider $Y_t := h_1(\overline{X}_t)$, $t \in \bbn$, since adding a constant does not affect our results. By Lemma~\ref{lemma: linex approx functional} it holds that $(\overline{X}_t)_{t\in\bbn}$ is a 2-approximating functional of the same size as $(X_t)_{t\in\bbn}$, though $a_0, \dots, a_{d-1}$ are possibly equal to infinity. 
    Furthermore, by Proposition~\ref{prop:1approx} it follows that our kernel $h$ as defined before is 2-continuous in particular, so Lemma~2.15 of \cite{borovkovaetal} yields that $h_1$ is also 2-continuous. The 2-approximating condition is preserved when 2-continuous functions are applied (see Lemma~\ref{lemma: approx + cont}), therefore $(Y_t)_{t\in\bbn}$ remains to be 2-approximating, and even though the size of the approximating constants changes, now all approximating constants are finite (cf.\ Remark~\ref{remark: approx + cont}). \\
    Using the supposed summability conditions yields that $k^4 \beta_k \rightarrow 0$ as $k \to \infty$. Consequently, $(Z_t)_{t \in \bbz}$ is absolutely regular of size -4. Note that absolute regularity implies strong mixing of at least same size: For absolute regularity coefficients $(\beta_k)_{k\in\bbn_0}$ of size $-\lambda_0$ it holds
    \[
        k^{\lambda_0} \alpha_k \leq k^{\lambda_0} \beta_k \rightarrow 0
    \]
    as $k \to \infty$, where the series $(\alpha_k)_{k\in\bbn_0}$ denotes the strong mixing coefficients. (For further information on mixing, see, e.g., \citealp[Chapter~14]{davidson}.) Therefore, the inequality $-\frac{p}{p-2} \geq -4$ needs to be fulfilled, which is the case for $p \geq \frac{8}{3}$. Equation~\eqref{eq: assump 2} is satisfied due to the assumed stationarity: It holds
    \begin{equation*}
        \sup_{t \geq 1} \norm{Y_t}_3 
        = \norm{h_1(\overline{X}_1)}_3 \leq 1 < \infty,
    \end{equation*}
    since $h_1(x) = \bbe(1_{\Pi(x)=\Pi(\overline{X}_0)}) \leq 1$ for all $x$ by monotonicity. \\
    Furthermore, in an analogous way it follows that the approximating constants $(a_k)_{k \geq 0}$ of $(X_t)_{t \in \bbn}$ are of size -4. Denoting the approximating constants of $(\overline{X}_t)_{t \in \bbn}$ by $(\overline{a}_k)_{k\in\bbn_0}$, by Lemma~\ref{lemma: approx + cont} $(\overline{a}_k)_{k\in\bbn_0}$ is of size $-1$. 
\end{proof}


\section{Applications}
\label{section: applications}

	In this section we use the asymptotic distribution of $S_n^d$ to test  whether two time series come from the same data generating processes. Our novel test is compared with two well established tests in the literature.  Moreover, an application to real world EEG data is provided.
	
	Let $(X_t)_{t\in I}$ and $(Y_t)_{t\in I}$ be two independent time series, and denote by $S_n^d(X)$ and $S_n^d(Y)$ their respective SCIs.
	We want to test the null that
	$(X_t)_{t\in I}$ and $(Y_t)_{t\in I}$ follow the same data generating process (DGP)
	against any other alternative.
	Then, under the null it follows that $\mathbb{E}(S_n^d(X))=\mathbb{E}(S_n^d(Y))$ and, therefore, by Theorem~\ref{theorem: CLT SCI} we have that 

	\begin{equation*}\label{testdist}
		\mathcal{S}_n^d(X,Y)=\frac{\sqrt{n}(S_n^d(X)-S_n^d(Y))}{2(\sigma_n(X)+\sigma_n(Y))} \xrightarrow[]{d} N(0,1).
	\end{equation*} 
	
We have studied the new test statistic $\mathcal{S}_n^d(X,Y)$ for $d=3$ and sample sizes of $n= 2000, 5000$ and $10000$. Each process was repeated $1000$ times and the proportion of rejections of the null was calculated using a nominal size of 5\%. Various time series were generated in order to test the size and power of the new test statistic. In order to conduct size experiments the analyzed models have been the following:
\begin{description}[labelindent=1cm, leftmargin=!,labelwidth=\widthof{\bfseries DGP 4:}]
	\item[DGP 1] $X_t=0.5\epsilon_{t-1}+\epsilon_t$,\quad $Y_t=0.5\varepsilon_{t-1}+\varepsilon_t$
	\item[DGP 2] $X_t=0.5 X_{t-1}+0.8\epsilon_{t}$, \quad $Y_t=0.5 Y_{t-1}+0.8\varepsilon_{t}$
	\item[DGP 3] $X_t=0.5\vert X_{t-1}\vert^{0.8}+\epsilon_t$, \quad $Y_t=0.5\vert Y_{t-1}\vert^{0.8}+\varepsilon_t$
	\item[DGP 4] $X_t=\sqrt{h_t}\epsilon_t$, $h_t=1+0.5X_{t-1}^2$, \quad $Y_t=\sqrt{h_t}\varepsilon_t$, $h_t=1+0.5Y_{t-1}^2$
\end{description}
with  $\epsilon_t$ and $\varepsilon_t$ i.i.d. $N(0,1)$.
	
    DGP~1 is a moving average process of order 1, MA(1). DGPs~2 and 3 are linear and nonlinear AR(1) autoregressions respectively and 
    DGP~4 is a heteroscedastic ARCH(1)-process commonly employed in financial applications.
    
    Table~\ref{table: size} shows the size result.  The test is mainly conservative and reasonably well sized, with
    rejection frequencies occurring at approximately their nominal level rates except for the heteroscedastic model  (DGP~4) that exhibits rejection rates below the nominal level for the smallest sample sizes, reaching close to 0.05 for $n=10000$. 

\begin{table}[ht]
\centering
\caption{Size results of the $\mathcal{S}(X,Y)$ test.}\label{table: size}
\begin{tabular}{lcccc}
\toprule
          & DGP1 & DGP2 & DGP3 & DGP4 \\
\midrule
$n=2000$  & 0.041 & 0.041 & 0.033 & 0.013 \\
$n=5000$  & 0.045 & 0.042 & 0.049 & 0.027 \\
$n=10000$ & 0.055 & 0.050 & 0.064 & 0.054 \\
\bottomrule
\end{tabular}
\end{table}
    
In order to compute the finite sample power performance of the test, we have used $X_t$ of DGP~$i$ and $Y_t$ from DGP~$j$ for $i,j=\{1, 2, 3, 4\}$ with $i\neq j$. If $i=j$, we have to consider $X_t$ from DGP~$i$ and $Y_t$ from the same DGP as $X_t$ but changing the autoregressive parameter from 0.5 to 0.8. In this way we are not just testing for differences in the data generating process but also in the parameters that define the underlying model.
    
    Table~\ref{table: power} illustrates the results of the empirical power of the test. The power is significantly high, showing a very good performance in differentiating  the different DGPs with different dynamic properties. Looking at the power of the test when a change in the autoregressive parameter is applied, the power performance of the test is also very high, except for the heteroscedastic ARCH(1) where it shows a poor behavior. In all cases we see that the power increases with sample size.
    
\begin{table}[ht]
\centering
\caption{Power results of the $\mathcal{S}(X,Y)$ test. Comparing a DGP with itself, the autoregressive parameter was changed.}\label{table: power}
\begin{tabular}{llcccc}
\toprule
          &            & DGP1 & DGP2 & DGP3 & DGP4 \\
\midrule
\multirow{3}{*}{DGP1} 
          & $n=2000$   & 0.961 & 0.490 & 0.976 & 1     \\
          & $n=5000$   & 1     & 0.886 & 1     & 1     \\
          & $n=10000$  & 1     & 0.992 & 1     & 1     \\
\addlinespace
\multirow{3}{*}{DGP2} 
          & $n=2000$   & --    & 0.894 & 0.538 & 0.996 \\
          & $n=5000$   & --    & 1     & 0.888 & 1     \\
          & $n=10000$  & --    & 1     & 0.995 & 1     \\
\addlinespace
\multirow{3}{*}{DGP3} 
          & $n=2000$   & --    & --    & 0.387 & 0.389 \\
          & $n=5000$   & --    & --    & 0.790 & 0.931 \\
          & $n=10000$  & --    & --    & 0.976 & 1     \\
\addlinespace
\multirow{3}{*}{DGP4} 
          & $n=2000$   & --    & --    & --    & 0.030 \\
          & $n=5000$   & --    & --    & --    & 0.097 \\
          & $n=10000$  & --    & --    & --    & 0.217 \\
\bottomrule
\end{tabular}
\end{table}

The $\mathcal{S}(X,Y)$ test is compared with two tests, one based on the empirical distribution functions, the classical Kolmogorov-Smirnov two sample test (KS-test) (see \citealp{smirnov1939estimation}), and a more recent one (JP-test) based on the spectral densities of the time series (see \citealp{jentsch2015testing}). 

The two-sample KS-test is a non-parametric test that evaluates whether two independent samples come from the same continuous distribution. The test compares their empirical distribution functions (EDFs) with the statistic:
$$KS_{n,m} = \sup_x \left| F_n(x) - G_m(x) \right|$$
which is the maximum absolute difference between the two EDFs.
Under the null hypothesis and as \( n, m \to \infty \), the scaled statistic converges in distribution to the supremum of a Brownian bridge:
$$\sqrt{\frac{nm}{n + m}} KS_{n,m} \xrightarrow{d} \sup_{0 \leq t \leq 1} |B(t)|$$
where \( B(t) \) is a Brownian bridge process. The critical values are obtained from the Kolmogorov distribution.

On the other hand, the JP-statistic tests whether two (or more) stationary processes have the same spectral density:
$$
    H_0: f_X(\omega) = f_Y(\omega) \quad \text{for all } \omega \in [-\pi, \pi].
$$
Each spectral density is estimated using a kernel estimator:
$$
    \hat{f}_{j,h}(\omega_k) = \frac{1}{n} \sum_{\ell=1}^{n} K_h(\omega_k - \omega_\ell) I_j(\omega_\ell)
$$
where $I_j(\omega_\ell)$ is the periodogram of series $j$ at frequency $\omega_\ell$, and $K_h$ is a smoothing kernel with bandwidth $h$.
The test statistic is:
$$ 
    JP_n = \frac{1}{n h^{1/2}} \sum_{k=1}^n \left( \hat{f}_{X,h}(\omega_k) - \hat{f}_{Y,h}(\omega_k) \right)^2
$$
that under the null
$$JP_n \xrightarrow{d} \mathcal{N}(0, \tau^2)$$
where $\tau^2$ is a long-run variance.
Nevertheless, as acknowledged by the authors, statistics of this type converge very slowly to their asymptotic normal distribution, which results in a poor performance of the corresponding asymptotic test in terms of size. To address this issue, the authors propose a suitable randomization technique that approximates the finite-sample distribution of the JP-test statistic using data-driven critical values. In what follows, we adopt the randomized version of the statistic, as recommended by the authors.

For both the Kolmogorov–Smirnov (KS) and Jentsch–Pauly (JP) tests, we simulate the same data-generating processes described earlier. Specifically, we consider a sample size of $n=2000$, and for each configuration, we perform 1000 Monte Carlo replications to assess the empirical  behavior of the test statistics at a nominal level of 5\%.

\begin{table}[ht]
\centering
\caption{Size of KS and JP tests.}\label{table: sizeKSJP}
\begin{tabular}{lcccc}
\toprule
         & DGP1 & DGP2 & DGP3 & DGP4 \\ 
\midrule
KS-test  & 0.124 & 0.221 & 0.093 & 0.057 \\ 
JP-test  & 0.110 & 0.072 & 0.137 & 0.677 \\ 
\bottomrule
\end{tabular}
\end{table}
The empirical size figures reported in Table \ref{table: sizeKSJP} reveal non‑negligible distortions for both goodness‑of‑fit procedures.
Under all four DGPs, the KS-test rejects the null much more often than the nominal 5\% rate would prescribe. The problem is particularly acute for DGP2, where the estimated size climbs to 22.1\%, but it remains roughly twice the nominal level for DGPs 1 and 3 and only approaches correct calibration under DGP4 (5.7\%). The JP-test exhibits an even more pronounced liberal behaviour. Its size reaches 13.7\% for DGP3 and explodes to 67.7\% for DGP4, indicating a severe inflation of Type‑I error probability for heteroscedastic processes.

This systematic over‑rejection naturally translates into the performance displayed in Table~\ref{table: powerKSJP}. Because both tests already reject too frequently under the null, their empirical power appears artificially high across the alternative scenarios. Consequently, the ostensibly strong power should be interpreted with caution: It is driven in large part by the size distortions documented in Table \ref{table: sizeKSJP}, rather than by genuine discriminatory capability. 

\begin{table}[ht]
\centering
\caption{Power of KS and JP tests.}\label{table: powerKSJP}
\begin{tabular}{llcccc}
\toprule
      &        & DGP1 & DGP2 & DGP3 & DGP4 \\
\midrule
\multirow{2}{*}{DGP1} 
      & KS-test & 1      & 0.227  & 1      & 0.663 \\ 
      & JP-test & 1      & 1     & 0.967  & 1      \\[4pt]

\multirow{2}{*}{DGP2} 
      & KS-test & --      & 1 & 1      & 0.4430 \\ 
      & JP-test & --      & 1     & 0.942  & 1      \\[4pt]

\multirow{2}{*}{DGP3} 
      & KS-test & --      & --     & 1      & 1      \\ 
      & JP-test & --      & --     & 1      & 1      \\[4pt]

\multirow{2}{*}{DGP4} 
      & KS-test & --      & --     & --      & 0.778 \\ 
      & JP-test & --      & --     & --      & 1      \\
\bottomrule
\end{tabular}
\end{table}

In summary, the $\mathcal{S}(X,Y)$ test, built on the symbolic correlation integral, offers practical advantages over classical EDF tests such as Kolmogorov-Smirnov (KS) and $L^2$-type spectral density tests like the JP-test. First, no smoothing window, proximity threshold $\varepsilon$, or bandwidth has to be tuned, eliminating the size distortions that plague KS (when serial dependence is present) and the spectral test (whose performance hinges on kernel-bandwidth selection). Second, $\mathcal{S}(X,Y)$ captures a higher-order non-linear structure. KS compares only marginal empirical distributions, and the spectral test targets second-order (covariance) properties. $\mathcal{S}(X,Y)$, by contrast, counts pattern recurrences, making it sensitive to subtle dynamical changes for which a central limit theorem has been proved via U-statistics. Third, the absence of kernels or block resampling keeps variance and computational load small, producing reliable p-values even for a few thousand points where KS or spectral tests often missize.

 To show the power performance of the new test on real complex data, we have analyzed sets of electroencephalographic (EEG) time series. Concretely we have used surface EEG recordings from healthy volunteers with eyes closed ($O$) and eyes open ($Z$), and intracranial EEG recordings from epilepsy patients during the seizure free interval from within ($F$) and from outside ($N$) the seizure generating area as well as intracranial EEG recordings of epileptic seizures ($S$) (see \citealp{andrzejak2001indications}, for a more detailed description of the dataset).

\begin{table}[h!]
\caption{Values of $\mathcal{S}(X,Y)$ statistic and the respective p-values for all possible combinations of the five EEG time series.\label{table: EEG}}
\begin{tabular}{cccccc}
\cline{2-6}
\multicolumn{1}{l|}{}                    & \multicolumn{1}{c|}{$\mathcal{S}(S,F)$} & \multicolumn{1}{c|}{$\mathcal{S}(S,N)$}  & \multicolumn{1}{c|}{$\mathcal{S}(S,O)$} & \multicolumn{1}{c|}{$\mathcal{S}(S,Z)$} & \multicolumn{1}{c|}{$\mathcal{S}(F,N)$} \\ \hline
\multicolumn{1}{|c|}{\textbf{Statistic}} & \multicolumn{1}{c|}{41.826}             & \multicolumn{1}{c|}{52.554}             & \multicolumn{1}{c|}{30.178}             & \multicolumn{1}{c|}{51.829}             & \multicolumn{1}{c|}{10.729}             \\ \hline
\multicolumn{1}{|c|}{\textbf{p-value}}   & \multicolumn{1}{c|}{$< 0.001$}             & \multicolumn{1}{c|}{$< 0.001$}             & \multicolumn{1}{c|}{$<0.001$}             & \multicolumn{1}{c|}{$<0.001$}             & \multicolumn{1}{c|}{$<0.001$}             \\ \hline
\multicolumn{1}{l}{}                     & \multicolumn{1}{l}{}                    & \multicolumn{1}{l}{}                    & \multicolumn{1}{l}{}                    & \multicolumn{1}{l}{}                    & \multicolumn{1}{l}{}                    \\ \cline{2-6} 
\multicolumn{1}{l|}{}                    & \multicolumn{1}{c|}{$\mathcal{S}(F,O)$} & \multicolumn{1}{c|}{$\mathcal{S}(F,Z)$} & \multicolumn{1}{c|}{$\mathcal{S}(N,O)$} & \multicolumn{1}{c|}{$\mathcal{S}(N,Z)$} & \multicolumn{1}{c|}{$\mathcal{S}(O,Z)$}   \\ \hline
\multicolumn{1}{|c|}{\textbf{Statistic}} & \multicolumn{1}{c|}{-11.648}            & \multicolumn{1}{c|}{10.003}             & \multicolumn{1}{c|}{-22.377}            & \multicolumn{1}{c|}{-0.726}             & \multicolumn{1}{c|}{21.651}             \\ \hline
\multicolumn{1}{|c|}{\textbf{p-value}}   & \multicolumn{1}{c|}{$<0.001$}             & \multicolumn{1}{c|}{$<0.001$}             & \multicolumn{1}{c|}{$<0.001$}             & \multicolumn{1}{c|}{0.035}              & \multicolumn{1}{c|}{$<0.001$}             \\ \hline
\end{tabular}
\end{table}

 Table~\ref{table: EEG} shows the results of the $\mathcal{S}(X,Y)$ test for all pairwise possibilities of the five EEG time series described above. In all cases the test statistic rejects the null hypothesis that the time series come from the same data generating process, at a significance level of $0.05$.

\textbf{Declaration:} The authors report there are no competing interests to declare.


\bibliography{references}
\bibliographystyle{apalike}
\appendix
\section{Properties of Entropies}\label{propent}
We consider an entropy $H$ as a map from the set $\Delta$ of all stochastic vectors $(p_i)_{i=1}^l;l\in {\mathbb N}$  ($p_i\in [0,1]$ for $i=1,2,\ldots ,l$ and $\sum_{i=1}^l p_i=1$) into $\left[0,\infty \right[$.  Here $H(p_1,p_2,\ldots , p_l)$ stands for $H((p_i)_{i=1}^l)$.

Besides the classical Shannon entropy defined by
\begin{align*}
H^S((p_i)_{i=1}^l)=-\sum_{i=1}^{l}p_i\ln p_i
\end{align*}
for $(p_i)_{i=1}^l\in\Delta$,
the R\'enyi entropy and the Tsallis entropy
defined by
\begin{align*}
H^R_\alpha((p_i)_{i=1}^l)=\frac{\ln\sum_{i=1}^{n}p_i^\alpha}{1-\alpha}
\end{align*}
and
\begin{align*}
H^T_\alpha((p_i)_{i=1}^l)=\frac{1-\sum_{i=1}^{n}p_i^\alpha}{\alpha -1},
\end{align*}
respectively, for $(p_i)_{i=1}^l\in\Delta$ and given $\alpha\in\ \left]0,\infty\right[\,\setminus\,\{1\}$ are of some special interest in many fields. The Shannon entropy itself is considered as both the Tsallis entropy and the Renyi entropy for $\alpha=1$.  This is justified by the fact that
\begin{align*}
\lim_{\alpha\to\infty} H^T_\alpha(p_1,p_2,\ldots ,p_l)=\lim_{\alpha\to\infty} H^R_\alpha(p_1,p_2,\ldots ,p_l)=H^S_\alpha(p_1,p_2,\ldots ,p_l)
\end{align*}
for all $(p_i)_{i=1}^l\in\Delta$.
Also note that,  given some $\alpha\in\ \left]0,\infty\right[\setminus\{1\}$,  it holds
\begin{align*}
H^R_\alpha(p_1,p_2,\ldots ,p_l)=\frac{\ln (1-(\alpha -1)H^T_\alpha(p_1,p_2,\ldots ,p_l))}{1-\alpha}
\end{align*}
for all $(p_i)_{i=1}^l\in\Delta$, implying that
\begin{align}\label{monot}
H^R_\alpha(p_1,p_2,\ldots ,p_l)\leq H^R_\alpha(q_1,q_2,\ldots ,q_{l'})\Longleftrightarrow H^T_\alpha(p_1,p_2,\ldots ,p_l)\leq H^T_\alpha(q_1,q_2,\ldots ,q_{l'})\nonumber\\\mbox{ for all }(p_i)_{i=1}^l, (q_i)_{i=1}^{l'}\in\Delta .
\end{align}

It is well known that,  for each $\alpha\in\ ]0,\infty[$,  the Tsallis entropy satisfies the  meaning that
\begin{eqnarray*}
&&H_{\alpha}^T(p_1,\ldots ,p_{k-1},p_k,\ldots,p_m,p_{m+1},\ldots ,p_l)\nonumber\\
&&\hspace{0.25cm}=H_{\alpha}^T\left (p_1,\ldots ,p_{k-1},\sum_{i=k}^m p_i,p_{m+1},\ldots ,p_l\right )+\left (\sum_{i=k}^m p_i\right  )^{\hspace{-1mm}\alpha } H_{\alpha}^T\left (\frac{p_k}{\sum_{i=k}^m p_i},\ldots ,\frac{p_m}{\sum_{i=k}^m p_i}\right)\nonumber\\
&&\hspace{0.5cm}\mbox{for all }(p_1,\ldots ,p_{k-1},p_k,\ldots,p_m,p_{m+1},\ldots ,p_l)\in \triangle;\, k,m,l\in {\mathbb N};\, k\leq m\leq l.
\end{eqnarray*}
For more details,  in particular,  further references and a characterization of the Tsallis entropy based on the generalized Shannon additivity, see \citealp{axioms6020014}.

Since $H_{\alpha}^T(p_1,p_2,\ldots ,p_l)$ for fixed $n\in {\mathbb N}$ and $(p_1,p_2,\ldots ,p_l)\in\Delta$ takes its only maximum value at the point $\overbrace{\left (\frac{1}{l},\ldots ,\frac{1}{l}\right )}^{n\mbox{\scriptsize \ times}}$,  it follows
\begin{eqnarray*}
&&H_{\alpha}^T(p_1,\ldots ,p_{k-1},p_k,\ldots,p_m,p_{m+1},\ldots ,p_l)\nonumber\\
&&\hspace{0.25cm}\leq H_{\alpha}^T(p_1,\ldots ,p_{k-1},
\overbrace{\frac{\sum_{i=k}^m p_i}{m-k+1},\ldots ,\frac{\sum_{i=k}^m p_i}{m-k+1}}^{m-k+1\mbox{\scriptsize \ times}},p_{m+1},\ldots ,p_l)\nonumber\\
&&\hspace{0.5cm}\mbox{for all }(p_1,\ldots ,p_{k-1},p_k,\ldots,p_m,p_{m+1},\ldots ,p_l)\in \triangle;\, k,m,l\in {\mathbb N};\,k\leq m\leq l
\end{eqnarray*}
with equality for $p_m=p_{m+1}=\ldots =p_{k-1}=p_k$.  By \eqref{monot},  the analogue statement is valid for the R\'enyi entropy 
$H_{\alpha}^R$. 

\section{Near-epoch Dependence and U-Statistics: Auxiliary Results} \label{section: appendix limit theorems r-approx}

\cite{borovkovaetal} have generalized the theory regarding limit theorems for U-statistics on short-range-dependent time series significantly; for an overview on some earlier results on limit theorems for U-statistics we refer the reader in particular to \cite[p. 4295, 4298]{borovkovaetal}. Some more recent contributions include \cite{deh_wen_10} and \cite{deh_vog_wen_wie_17}, as well as \cite{leu_neu_13} and \cite{leu_neu_13b} who have considered the asymptotic distribution of degenerate U-statistics.

\cite{borovkovaetal} have proposed their theory with regard to $r$-approximating functionals whose approximating constants satisfy some sort of summability criterion. We weaken the assumptions given in this theory such that we only require summability up to a finite number of constants. Even though our proofs follow mainly those of the authors, we include them here for the sake of completeness.

\subsection{Near regularity}

In order to consider the main theorems, which are given by a law of large numbers and a central limit theorem for U-statistics when the underlying time series is an $r$-approximating functional of an absolutely regular time series, first we consider some of the preceding results of \cite{borovkovaetal}, which the authors used for the proofs of their main results, with regard to a generalized summability condition.

\begin{definition}
    \begin{enumerate}
        \item Let $(X_t)_{t\in\bbn}$ be a time series, let $M,N \in \bbn$ be positive integers and moreover assume that $M$ is even. An \emph{$(M,N)$-blocking of $(X_t)_{t\in\bbn}$} is defined as the sequence of blocks $B_1, B_2, \dots$ each consisting of $N$ consecutive $X_t$'s, where each two consecutive blocks are separated by blocks of length $M$, more precisely, 
        \begin{equation*}
            B_s = (X_{(s-1)(M+N)+\frac{M}{2}+1}, \dots, X_{s(M+N)-\frac{M}{2}}), \: s \geq 1.
        \end{equation*}
        The sets of indices in a block $B_s$ are denoted by $I_s := \{(s-1)(M+N)+\frac{M}{2}+1, \dots, s(M+N)-\frac{M}{2}\}$.
        \item A time series $(X_t)_{t\in\bbn}$ is called \emph{nearly regular} if for any $\varepsilon, \delta > 0$ there exists an $M \in \bbn$ such that for all $N \in \bbn$ we can find a sequence $(B'_s)_{s\in\bbn}$ consisting of independent $\bbr^N$-valued random vectors which satisfy the following two conditions:
        \begin{enumerate}
            \item Let $B_s$ denote the $s$-th $(M, N)$-block of $(X_t)_{t\in\bbn}$. Then, $B'_s$ has the same distribution as $B_s$.
            \item It holds $\bbp(\norm{B_s-B'_s}\leq\delta) \geq 1 - \varepsilon$, where $\norm{\cdot}$ denotes the $L_1$-norm on $\bbr^N$, that is, $\norm{x} = \sum^N_{i=1} \abs{x_i}$.
        \end{enumerate}
    \end{enumerate}
\end{definition}

``Absolute regularity of a process implies that the sequence of $(M,N)$-blocks can be perfectly coupled with the sequence of independent long blocks, which have the same distribution as those of the original process''
\citep[p.~4271]{borovkovaetal}. Near regularity in turn refers to the similarity to absolute regularity in terms of this property, that is, near regularity implies closeness to such a time series. 

Theorem~3 of \cite{borovkovaetal} shows that 1-approximating functionals with summable approximating constants are nearly regular. However, since the authors have not used the summability condition in their proof, their result remains valid even if the summability condition is not satisfied at all, though their result then becomes trivial. For our purpose it is enough to consider the following generalization:

\begin{theorem}\label{theorem: Theorem 3}
    Let $(X_t)_{t\in\bbn}$ be a 1-approximating functional with approximating constants $(a_k)_{k \in \bbn_0}$ of an absolutely regular time series with mixing coefficients $(\beta_k)_{k\in\bbn_0}$. If there are integers $K, L, N \in \bbn$, $K$ even, such that $\sum^{\infty}_{k=L} a_k < \infty$, then we can approximate the sequence of $(K+2L, N)$-blocks $(B_s)_{s\in\bbn}$ by a sequence of independent blocks $(B'_s)_{s\in\bbn}$ with the same marginal distribution in such a way that 
    \begin{equation}
        \bbp(\norm{B_s-B'_s} \leq 2a_L) \geq 1 - \beta_K - 2a_L,
    \end{equation}
    where
    \begin{equation}
        a_L := (2\sum^{\infty}_{k=L} a_l)^{1/2},
    \end{equation}
    so $(X_t)_{t\in\bbn}$ is nearly regular by definition.
\end{theorem}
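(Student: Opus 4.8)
The plan is to follow the block-coupling strategy that \cite{borovkovaetal} use for their Theorem~3, the only novelty being that the summability hypothesis is weakened to the tail condition $\sum_{k\ge L} a_k < \infty$, which is exactly what is needed for the constant $a_L = (2\sum_{k\ge L}a_k)^{1/2}$ to be finite. The argument has three ingredients: a position-dependent truncation of the functional that localizes each block to a finite window of the background process; a coupling of these windows against independent copies via absolute regularity; and a triangle-inequality/Markov combination of the two resulting error sources.

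First I would truncate. Writing $I_s = \{p_s, \ldots, q_s\}$ for the index set of the $s$-th $(K+2L,N)$-block (so $q_s - p_s = N-1$), I would replace the variable at position $t = p_s + j$, $0 \le j \le N-1$, by $\tilde X_t := \bbe(X_t \mid \ca_{t-\ell_t}^{t+\ell_t})$ with the \emph{tent-shaped} level $\ell_t := L + \min(j,\, N-1-j)$. The crucial point is that with this choice every conditioning window $[t-\ell_t,\, t+\ell_t]$ lies inside the common interval $[p_s - L,\, q_s + L]$, so the truncated block $\tilde B_s := (\tilde X_t)_{t\in I_s}$ is measurable with respect to $W_s := (Z_j)_{j \in [p_s-L,\, q_s+L]}$, and consecutive windows $W_s, W_{s+1}$ are separated by $(K+2L) - 2L = K$ indices of the background process (this is where the parity assumption enters, since $M = K+2L$ must be even). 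By the $1$-approximating condition and the shift-invariance of Lemma~\ref{lem: r-appox shift-invariance}, $\bbe\abs{X_t - \tilde X_t} \le a_{\ell_t}$, and since each level $L+m$ is attained by at most two values of $j$,
\[
  \bbe\norm{B_s - \tilde B_s} = \sum_{t\in I_s} \bbe\abs{X_t - \tilde X_t} \le \sum_{j=0}^{N-1} a_{L+\min(j,\,N-1-j)} \le 2\sum_{k\ge L} a_k = a_L^2.
\]
It is exactly this telescoping that removes the dependence on $N$; a naive truncation at the fixed level $L$ would only give $N a_L$, which is useless.

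Next I would couple. Since the windows $(W_s)_s$ are blocks of the absolutely regular process $(Z_t)$ separated by $K$ indices, the $\beta$-mixing coefficient between $\sigma(W_1,\ldots,W_{s})$ and $\sigma(W_{s+1})$ is at most $\beta_K$; applying Berbee's coupling lemma sequentially (as in \cite{borovkovaetal}) yields independent copies $(W'_s)_s$ with $W'_s \overset{D}{=} W_s$ and $\bbp(W_s \ne W'_s) \le \beta_K$ for each $s$. Setting $\tilde B'_s := \tilde g_s(W'_s)$, where $\tilde g_s$ is the measurable map with $\tilde B_s = \tilde g_s(W_s)$, gives independent truncated blocks that coincide with $\tilde B_s$ off an event of probability at most $\beta_K$. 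To promote these to genuine copies of the \emph{untruncated} blocks I would draw $B'_s$ from the regular conditional law of $B_s$ given $W_s$, evaluated at $W'_s$ and realized with auxiliary randomness chosen independently across $s$; then $(B'_s, W'_s) \overset{D}{=} (B_s, W_s)$, the $B'_s$ are independent, $B'_s \overset{D}{=} B_s$, and $\tilde B'_s = \bbe(B'_s \mid W'_s)$, so the truncation bound above also gives $\bbe\norm{B'_s - \tilde B'_s} \le a_L^2$.

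Finally I would combine. On the success event $\{W_s = W'_s\}$ we have $\tilde B_s = \tilde B'_s$, hence $\norm{B_s - B'_s} \le \norm{B_s - \tilde B_s} + \norm{B'_s - \tilde B'_s}$. Applying Markov to each truncation error (each of expectation at most $a_L^2$, against threshold $a_L$) bounds its exceedance probability by $a_L$, so
\[
  \bbp\brackets{\norm{B_s - B'_s} > 2a_L} \le \bbp(W_s \ne W'_s) + \bbp\brackets{\norm{B_s-\tilde B_s} > a_L} + \bbp\brackets{\norm{B'_s - \tilde B'_s} > a_L} \le \beta_K + 2a_L,
\]
which is the claim; near regularity then follows by choosing $L$ large (so $a_L$ is small, using only the tail-summability) and then $K$ even and large (so $\beta_K$ is small), with $M = K+2L$ fixed independently of $N$. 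I expect the main obstacle to be the two bookkeeping subtleties rather than any deep estimate: the truncation must be position-dependent to avoid an $N$-factor, and the coupled blocks must be regenerated from the conditional law so as to carry the full distribution of $B_s$ and not merely that of the truncated block $\tilde B_s$. Both become routine once the tent-shaped truncation level is in place.
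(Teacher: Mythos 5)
Your proof is correct and follows exactly the block-coupling argument of Borovkova, Burton and Dehling that the paper itself relies on: the paper gives no independent proof of this theorem, remarking only that the original proof of their Theorem~3 never uses more than the tail sum $\sum_{k\ge L} a_k$ — which is precisely what your position-dependent (tent-shaped) truncation bound $\bbe\norm{B_s-\tilde B_s}\le 2\sum_{k\ge L}a_k = a_L^2$ isolates, after which Berbee coupling of the separated windows and two applications of the Markov inequality give the stated estimate. The one small imprecision — $\tilde B'_s$ should be $\tilde g_s(W'_s)$, the vector of componentwise conditional expectations over the shrinking windows, rather than $\bbe(B'_s\mid W'_s)$ — does not affect the argument, since the distributional identity $(B'_s,W'_s)\overset{D}{=}(B_s,W_s)$ is all that is needed to transfer the truncation bound to the coupled blocks.
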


\subsection{Moment inequalities and central limit theorem for partial sums}

Now we generalize some inequalities for second and fourth order moments of partial sums $S_n = X_1 + \dots + X_n$ of 1-approximating functionals of absolutely regular time series which have been given by \cite{borovkovaetal}. Our proofs are closely linked to the original proofs, but our basic idea is to split the respective sums appearing in the proofs according to our altered summability condition. Hence, the difference is that we need to find a different bound for the first summands.

\begin{lemma}\label{lemma: Lemma 2.23}
Let $(X_t)_{t\in\bbn}$ be a 1-approximating functional with constants $(a_k)_{k \in \bbn_0}$ of an absolutely regular time series with mixing coefficients $(\beta_k)_{k\in\bbn_0}$. Moreover, suppose that $\bbe X_0 = 0$ and that one of the following conditions holds for a fixed integer $d \geq 0$:
\begin{enumerate}
    \item $X_0$ is bounded a.s.\ and $\sum^{\infty}_{k=d} (a_k + \beta_k) < \infty$.
    \item $\bbe |X_0|^{2+\delta} < \infty$ for some $\delta > 0$ and $\sum^{\infty}_{k=d} (a_k^{\frac{\delta}{1+\delta}} + \beta_k^{\frac{\delta}{2+\delta}}) < \infty$.
\end{enumerate}
Then it follows
\begin{equation}
    \frac{1}{n} \bbe S_n^2 \rightarrow \bbe X_0^2 + 2 \sum^{\infty}_{k=1} \bbe(X_0 X_k)
\end{equation}
as $n \to \infty$ and the sum on the r.h.s.\ converges absolutely.
\end{lemma}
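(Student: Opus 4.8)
The plan is to reduce the statement to the absolute convergence of the covariance series and then to invoke a dominated–convergence argument for sums. By stationarity and $\bbe X_0 = 0$, writing $S_n = X_1 + \dots + X_n$, one has
\[
  \frac{1}{n}\bbe S_n^2 = \bbe X_0^2 + 2\sum_{k=1}^{n-1}\brackets{1-\frac{k}{n}}\bbe(X_0 X_k).
\]
Hence, once I show that $\sum_{k=1}^\infty \abs{\bbe(X_0 X_k)} < \infty$, the claim follows at once: each summand $(1-k/n)\,\bbe(X_0X_k)$ converges to $\bbe(X_0X_k)$ as $n\to\infty$ and is dominated in modulus by the summable sequence $\abs{\bbe(X_0X_k)}$, so dominated convergence for series gives convergence of the sum to $\sum_{k=1}^\infty \bbe(X_0X_k)$, which is exactly the stated limit and is absolutely convergent. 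The entire content therefore lies in the covariance estimate and its summation.

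First I would establish a covariance inequality of the form
\[
  \abs{\bbe(X_0 X_k)} \leq C\,\brackets{a_{m}^{\theta} + \beta_{k-2m}^{\eta}},
\]
valid for every $m < k/2$, where in case (1) $\theta=\eta=1$ and in case (2) $\theta = \delta/(1+\delta)$ and $\eta = \delta/(2+\delta)$. This is obtained exactly as in \cite{borovkovaetal}: approximate $X_0$ by $X_0' := \bbe(X_0\mid\ca_{-m}^m)$ and $X_k$ by $X_k' := \bbe(X_k\mid\ca_{k-m}^{k+m})$, split $\bbe(X_0X_k)$ into $\bbe(X_0'X_k')$ plus two approximation errors, bound the errors through the $1$-approximating condition (using $\bbe\abs{X_0-X_0'}\le a_m$ together with Hölder's inequality, which produces the exponent $1$ in the bounded case and the exponent $\delta/(1+\delta)$ in case (2) after interpolating between $L^1$ and $L^{2+\delta}$), and bound $\abs{\Cov(X_0',X_k')}$ by a covariance inequality for the underlying absolutely regular field. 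Since absolute regularity implies strong mixing with $\alpha_k \le \beta_k$, a Davydov-type bound applies to $X_0',X_k'$, which are measurable with respect to $\sigma$-fields separated by a gap of length $k-2m$; this contributes $\beta_{k-2m}$ in case (1) and $\beta_{k-2m}^{\delta/(2+\delta)}$ in case (2).

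The modification relative to \cite{borovkovaetal} concerns the altered summability range. I would choose $m = m(k) := \floor{k/3}$, so that for $k \geq 3d$ both $m(k)\geq d$ and $k-2m(k)\geq d$; thus for these $k$ the covariance bound only involves approximating constants and mixing coefficients from the summable tail. Upon reindexing, each tail index $j\ge d$ is hit a bounded number of times by $k\mapsto m(k)$ and by $k\mapsto k-2m(k)$, whence
\[
  \sum_{k\geq 3d}\abs{\bbe(X_0X_k)} \leq C'\sum_{j\geq d}\brackets{a_j^{\theta}+\beta_j^{\eta}} < \infty
\]
by hypothesis. The finitely many remaining covariances with $1\le k<3d$ are each finite, since $\abs{\bbe(X_0X_k)}\le \bbe X_0^2 < \infty$ by Cauchy–Schwarz (using boundedness of $X_0$ in case (1) and $L^{2+\delta}\subset L^2$ in case (2)); crucially, these are exactly the indices for which the approximating constants may fail to be summable (or even be finite), and they are excluded from the covariance estimate. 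Combining the two pieces yields $\sum_{k\ge1}\abs{\bbe(X_0X_k)}<\infty$.

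I expect the main obstacle to be the covariance inequality in case (2): obtaining the interpolation exponent $\delta/(1+\delta)$ requires combining the $L^1$ bound $\bbe\abs{X_0-X_0'}\le a_m$ with a uniform $L^{2+\delta}$ control of $X_0-X_0'$ (via the $c_r$-inequality and the conditional Jensen inequality, as in Remark~\ref{remark: approx + cont}) and then pairing with $X_k\in L^{2+\delta}$ through Hölder's inequality with conjugate exponent $(2+\delta)/(1+\delta)$. Verifying that the Davydov-type bound genuinely applies to the conditional expectations $X_0', X_k'$ — which inherit the required integrability — and keeping careful track of the finite exceptional block $1 \le k < 3d$ are the other delicate points, but both follow the template of \cite{borovkovaetal} once the summable tail has been isolated.
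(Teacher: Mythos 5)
Your proposal is correct and follows essentially the same route as the paper's proof: the same decomposition of $\frac{1}{n}\bbe S_n^2$, the same split of the covariance sum at $k = 3d$ with the finite head bounded via Cauchy--Schwarz and the tail controlled through the $\lfloor k/3\rfloor$-reindexed summability hypothesis, and the same dominated-convergence conclusion. The only cosmetic difference is that you sketch a derivation of the covariance inequality that the paper simply imports as Lemma~2.18(ii) of \cite{borovkovaetal}.
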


\begin{proof}
    The proofs for the different conditions are basically the same using the respective results of Lemma 2.18 of \cite{borovkovaetal}. Therefore we just give a proof under the second condition. \\
    It holds 
    \begin{align*}
        \bbe S_n^2 &= \sum_{1 \leq i, j \leq n} \bbe X_i X_j \\
        &= n \bbe X_0^2 + 2 \sum^n_{k=1} (n-k) \bbe X_0 X_k \\
        &= n \biggl(\bbe X_0^2 + 2 \Bigl(\sum^{3d-1}_{k=1} \bigl(1 - \frac{k}{n}\bigr) \bbe X_0 X_k + \sum^n_{k=3d} (1 - \frac{k}{n}) \bbe X_0 X_k\Bigr)\biggr)
    \end{align*}
    by stationarity of $(X_t)_{t\in\bbn}$. \\
    Due to $\bbe |X_0|^{2+\delta} < \infty$, $\bbe X_0^2$ as well as $\bbe X_0 X_k$ are bounded by a constant $C_1 > 0$ for $1 \leq k < 3d$, so in particular, $\sum^{3d-1}_{k=1} (1 - \frac{k}{n}) \bbe X_0 X_k$ is bounded by $(3d-1)C_1$. Note the difference to \cite{borovkovaetal} at this point. Since $\bbe X_0 = 0$ by assumption, for $k \geq 3d$ Lemma 2.18 (ii) in that paper yields 
    \begin{equation*}
        |\bbe X_0 X_k| \leq 4 \norm{X_0}^{\frac{\delta}{1+\delta}}_{2+\delta} (a_{\floor{\frac{k}{3}}})^{\frac{\delta}{1+\delta}} + 2 \norm{X_0}^2_{2+\delta} (\beta_{\floor{\frac{k}{3}}})^{\frac{\delta}{2+\delta}}.
    \end{equation*}
    Hence, 
    \begin{equation*}
        \sum^{\infty}_{k=3d} |\bbe X_0 X_k|
        \leq C_2 \sum^{\infty}_{k=3d} \Bigl((a_{\floor{\frac{k}{3}}})^{\frac{\delta}{1+\delta}} + (\beta_{\floor{\frac{k}{3}}})^{\frac{\delta}{2+\delta}}\Bigr) 
        = 3C_2 \sum^{\infty}_{k=d} \Bigl(a_{k}^{\frac{\delta}{1+\delta}} + \beta_{k}^{\frac{\delta}{2+\delta}}\Bigr)
    \end{equation*}
    converges absolutely by assumption, where $C_2 := \max\{4 \norm{X_0}^{\frac{\delta}{1+\delta}}_{2+\delta}, 2 \norm{X_0}^2_{2+\delta}\}$. Then the dominated convergence theorem yields 
    \begin{equation*}
        \sum^n_{k=3d} \bigl(1 - \frac{k}{n}\bigr) \bbe X_0 X_k \rightarrow \sum^{\infty}_{k=3d} \bbe X_0 X_k
    \end{equation*}
    as $n \to \infty$, which proves our claim.
\end{proof}

\begin{lemma}\label{lemma: Lemma 2.24}
Let $(X_t)_{t\in\bbn}$ be a 1-approximating functional with constants $(a_k)_{k \in \bbn_0}$ of an absolutely regular time series with mixing coefficients $(\beta_k)_{k\in\bbn_0}$. Moreover, suppose that $\bbe X_0 = 0$ and that one of the following conditions holds for a fixed integer $d \geq 0$:
\begin{enumerate}
    \item $X_0$ is bounded a.s.\ and $\sum^{\infty}_{k=d} k^2 (a_k + \beta_k) < \infty$.
    \item $\bbe |X_0|^{4+\delta} < \infty$ for some $\delta > 0$ and $\sum^{\infty}_{k=d} k^2 (a_k^{\frac{\delta}{3+\delta}} + \beta_k^{\frac{\delta}{4+\delta}}) < \infty$.
\end{enumerate}
Then there exists a constant $C > 0$ such that
\begin{equation}
    \bbe S_n^4 \leq C n^2.
\end{equation}
\end{lemma}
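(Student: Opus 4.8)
The plan is to follow the structure of the corresponding fourth-moment estimate in \cite{borovkovaetal}, expanding the fourth power and reorganizing the sum according to the gap structure of the four indices, while modifying the treatment of the small-gap terms so as to accommodate the weakened summability condition. Concretely, I would begin by writing, using stationarity and symmetry,
\[
    \bbe S_n^4 = \sum_{i,j,k,l=1}^n \bbe(X_i X_j X_k X_l)
    \leq 4! \sum_{1 \le i \le j \le k \le l \le n} \abs{\bbe(X_i X_j X_k X_l)},
\]
and parametrizing the ordered quadruples by the position of $i$ together with the consecutive gaps $a = j-i$, $b = k-j$, $c = l-k$. The estimate is then organized according to which of the three gaps is the largest, since the largest gap is the natural place at which to split the product into two groups of indices that are far apart.

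Second, I would invoke the fourth-order versions of the moment inequalities of \cite{borovkovaetal} (the analogues of their Lemma~2.18 for products of four factors) across the largest gap. When the middle gap $b$ is the largest, the product approximately factorizes as $\bbe(X_i X_j)\bbe(X_k X_l)$ up to an error controlled by a mixing/approximation bound at lag $b$; when the first or the last gap is the largest, the corresponding single factor decouples and, because $\bbe X_0 = 0$, the leading term vanishes, leaving only the error term at the large lag. In the moment case this is precisely where the exponents $\frac{\delta}{3+\delta}$ and $\frac{\delta}{4+\delta}$ appearing in condition~(2) enter, via H\"older's inequality against the assumed $(4+\delta)$-th moment; in the bounded case the plain bound $a_m + \beta_m$ is used instead. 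The collected product-of-covariances contribution is the source of the main $\mathcal{O}(n^2)$ term: it is dominated by $3\bigl(\sum_{i,j} \abs{\bbe(X_i X_j)}\bigr)^2$, which is $\mathcal{O}(n^2)$ because the covariances are absolutely summable under the present (stronger) hypotheses, so that the second-order estimate of Lemma~\ref{lemma: Lemma 2.23} applies.

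Third, and this is the point at which the argument departs from \cite{borovkovaetal}, I would split each of the resulting sums over the largest gap $m$ at a fixed multiple of $d$, exactly as in the proof of Lemma~\ref{lemma: Lemma 2.23}. For $m$ beyond the threshold the error terms are estimated by the mixing/approximation bounds, so that their total contribution is at most a constant times
\[
    n \sum_{m \ge d} m^2\bigl(a_m^{\frac{\delta}{3+\delta}} + \beta_m^{\frac{\delta}{4+\delta}}\bigr),
\]
which is finite, hence $\mathcal{O}(n)$, by the assumed summability condition; the factor $m^2$ is precisely the number of ways of choosing the two remaining gaps, both bounded by $m$, and the factor $n$ counts the admissible positions of the first index. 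For the finitely many small-gap configurations below the threshold, where the mixing bounds are unavailable or non-summable, I would instead bound the mixed fourth moments crudely by a constant using $\bbe\abs{X_0}^{4+\delta} < \infty$ (respectively the almost-sure bound on $X_0$) together with H\"older's inequality; since there are only $\mathcal{O}(1)$ such gap configurations and $\mathcal{O}(n)$ positions, their total contribution is again $\mathcal{O}(n)$. Collecting the $\mathcal{O}(n^2)$ main term and the $\mathcal{O}(n)$ remainder yields $\bbe S_n^4 \le C n^2$.

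The main obstacle I anticipate is the clean bookkeeping of the gap decomposition: across all three cases for the largest gap, one must collect the leading factorized terms correctly into the two-covariance sum that produces the $\mathcal{O}(n^2)$ bound, while attaching every genuinely higher-order remainder to the large lag with the correct $m^2$ counting factor. A secondary technical point is checking that the fourth-order mixing inequalities deliver exactly the exponents $\frac{\delta}{3+\delta}$ and $\frac{\delta}{4+\delta}$, which is what makes the stated $k^2$-weighted summability hypothesis the natural one and keeps the crude small-gap bound (the new ingredient relative to \cite{borovkovaetal}) harmless.
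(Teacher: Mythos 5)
Your proposal follows essentially the same route as the paper's proof: the same parametrization of ordered quadruples by consecutive gaps, the same case split on which gap is largest, the add-and-subtract of the covariance product in the middle-gap case (with $\bbe X_0=0$ killing the leading term in the outer-gap cases), the $m^2$ counting factor with the tail sum controlled by the $k^2$-weighted summability hypothesis, and the crude constant bound on the finitely many small-gap configurations as the new ingredient relative to \cite{borovkovaetal}. The plan is correct and matches the paper's argument in all essential respects.
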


The proof follows the same idea, but is slightly more involved.

\begin{proof}
    Again, the proofs are very similar using the respective results of Lemma 2.18 as well as Lemma 2.21 and Lemma 2.22 in \cite{borovkovaetal}, respectively, so we only give the proof with regard to the second condition. By stationarity it holds
    \begin{align}
        \bbe S_n^4 
        &\leq 4! \sum_{1 \leq i_1 \leq i_2 \leq i_3 \leq i_4 \leq n} \abs{\bbe (X_{i_1} X_{i_2} X_{i_3} X_{i_4})} \nonumber \\
        &\leq 4! n \sum_{\substack{i, j, k \geq 0 \\ i+j+k \leq n}} \abs{\bbe (X_{0} X_{i} X_{i+j} X_{i+j+k})}. \label{eq: lemma 2.24 sum}
    \end{align}
    We can split the sum appearing in \eqref{eq: lemma 2.24 sum} in the following way:
    \begin{align*}
        &\sum_{\substack{i, j, k \geq 0 \\ i+j+k \leq n}} \abs{\bbe (X_{0} X_{i} X_{i+j} X_{i+j+k})} \\
        &\hspace{15mm} \leq \sum_{0 \leq j,k \leq i \leq n} \abs{\bbe (X_{0} X_{i} X_{i+j} X_{i+j+k})} 
        + \sum_{0 \leq i,j \leq k \leq n} \abs{\bbe (X_{0} X_{i} X_{i+j} X_{i+j+k})} \\
        &\hspace{30mm} + \sum_{0 \leq i,k \leq j \leq n} \abs{\bbe (X_{0} X_{i} X_{i+j} X_{i+j+k})} \\
        &\hspace{15mm} = \underbrace{\sum_{0 \leq j,k \leq i < 3d} \abs{\bbe (X_{0} X_{i} X_{i+j} X_{i+j+k})}}_{\text{(I)}} \ + \sum_{\substack{3d \leq i \leq n \\ 0 \leq j,k \leq i}} \abs{\bbe (X_{0} X_{i} X_{i+j} X_{i+j+k})} \\
        &\hspace{30mm} + \underbrace{\sum_{0 \leq i,j \leq k < 3d} \abs{\bbe (X_{0} X_{i} X_{i+j} X_{i+j+k})}}_{\text{(II)}} \ + \sum_{\substack{3d \leq k \leq n \\ 0 \leq i, j \leq k}} \abs{\bbe (X_{0} X_{i} X_{i+j} X_{i+j+k})} \\
        &\hspace{30mm} + \underbrace{\sum_{\substack{0 \leq i,k \leq j < 3d\\\phantom{0k}\\\phantom{3k}}} \abs{\bbe (X_{0} X_{i} X_{i+j} X_{i+j+k})}}_{\text{(III)}} \ +
        \underbrace{\sum_{\substack{3d \leq j \leq n \\ 0 \leq i,k \leq j \\ i < 3d \text{ or } k < 3d}} \abs{\bbe (X_{0} X_{i} X_{i+j} X_{i+j+k})}}_{\text{(IV)}} \\
        &\hspace{45mm} + \underbrace{\sum_{3d \leq i,k \leq j \leq n} \abs{\bbe (X_{0} X_{i} X_{i+j} X_{i+j+k})}}_{\text{(V)}}.
    \end{align*}
    Due to $\bbe |X_0|^{4+\delta} < \infty$ and the assumed stationarity, there exists a constant $C_1 > 0$ such that $\abs{\bbe (X_{0} X_{i} X_{i+j} X_{i+j+k})} < C_1$ (as well as a constant $C_2 >0$ such that $\abs{\bbe (X_{0} X_{i})} < C_2$), and hence, the finite sums (I), (II) and (III) which are independent of $n$ are obviously finite, too. 
    In particular it follows 
    \begin{equation*}
        \sum_{0 \leq j,k \leq i < 3d} \abs{\bbe (X_{0} X_{i} X_{i+j} X_{i+j+k})} \leq (3d)^3 \cdot C_1 = 27d^3 \cdot C_1
    \end{equation*}
    for sums of this type. By adding a zero-valued term we obtain
    \begin{align*}
        \abs{\bbe (X_{0} X_{i} X_{i+j} X_{i+j+k})} 
        &= \abs{\bbe (X_{0} X_{i} X_{i+j} X_{i+j+k})} - \abs{\bbe(X_0 X_i)} \cdot \abs{\bbe(X_{i+j}X_{i+j+k})} \\
        &\hspace{20mm} + \abs{\bbe(X_0 X_i)} \cdot \abs{\bbe(X_{i+j}X_{i+j+k})}
    \end{align*}
    such that we can find the following bound for (IV) + (V):
    \begin{align*}
        &\sum_{\substack{3d \leq j \leq n \\ 0 \leq i,k \leq j \\ i < 3d \text{ or } k < 3d}} \abs{\bbe (X_{0} X_{i} X_{i+j} X_{i+j+k})} +\sum_{3d \leq i,k \leq j \leq n} \abs{\bbe (X_{0} X_{i} X_{i+j} X_{i+j+k})} \\
        &\hspace{15mm} =\sum_{\substack{3d \leq j \leq n \\ 0 \leq i,k \leq j \\ i < 3d \text{ or } k < 3d}} \abs{\bbe (X_{0} X_{i} X_{i+j} X_{i+j+k})} - \abs{\bbe(X_0 X_i)} \cdot \abs{\bbe(X_{i+j}X_{i+j+k})} \\
        &\hspace{30mm} +  \sum_{\substack{3d \leq j \leq n \\ 0 \leq i,k \leq j \\ i < 3d \text{ or } k < 3d}} \abs{\bbe(X_0 X_i)} \cdot \abs{\bbe(X_{i+j}X_{i+j+k})} \\
        &\hspace{30mm} + \sum_{3d \leq i,k \leq j \leq n} \abs{\bbe (X_{0} X_{i} X_{i+j} X_{i+j+k})} - \abs{\bbe(X_0 X_i)} \cdot \abs{\bbe(X_{i+j}X_{i+j+k})} \\
        &\hspace{30mm} + \sum_{3d \leq i,k \leq j \leq n} \abs{\bbe(X_0 X_i)} \cdot \abs{\bbe(X_{i+j}X_{i+j+k})} \\
        &\hspace{15mm} \leq \sum_{\substack{3d \leq j \leq n \\ 0 \leq i,k \leq j}} \abs{\bbe (X_{0} X_{i} X_{i+j} X_{i+j+k})} - \abs{\bbe(X_0 X_i)} \cdot \abs{\bbe(X_{i+j}X_{i+j+k})} \\
        & \hspace{30mm} + n \cdot \sum_{3d \leq i,k \leq n} \abs{\bbe(X_0 X_i)} \cdot \abs{\bbe(X_{0}X_{k})} + n \cdot \sum_{\substack{0 \leq i,k \leq n \\ i < 3d \text{ or } k < 3d}} \abs{\bbe(X_0 X_i)} \cdot \abs{\bbe(X_{0}X_{k})}.
    \end{align*}
    Note that there we have summed up the first and the third term and have bounded the second and fourth term by using stationarity. Accordingly it follows
    \begin{align}
        &\sum_{\substack{i, j, k \geq 0 \\ i+j+k \leq n}} \abs{\bbe (X_{0} X_{i} X_{i+j} X_{i+j+k})} \nonumber \\ 
        &\hspace{10mm} \leq \ 3 \cdot 27d^3 \cdot C_1 + \sum_{\substack{3d \leq i \leq n \\ 0 \leq j,k \leq i}} \abs{\bbe (X_{0} X_{i} X_{i+j} X_{i+j+k})} 
        \ + \sum_{\substack{3d \leq k \leq n \\ 0 \leq i,j \leq k}} \abs{\bbe (X_{0} X_{i} X_{i+j} X_{i+j+k})} \nonumber \\
        & \hspace{20mm} + \sum_{\substack{3d \leq j \leq n \\ 0 \leq i,k \leq j}} \abs{\bbe (X_{0} X_{i} X_{i+j} X_{i+j+k})} - \abs{\bbe(X_0 X_i)} \cdot \abs{\bbe(X_{i+j}X_{i+j+k})} \nonumber \\
        & \hspace{20mm} + \ n \cdot \sum_{3d \leq i,k \leq n} \abs{\bbe(X_0 X_i)} \cdot \abs{\bbe(X_0 X_k)} \ + \ n \cdot \sum_{\substack{0 \leq i,k \leq n \\ i < 3d \text{ or } k < 3d}} \abs{\bbe(X_0 X_i)} \cdot \abs{\bbe(X_0 X_k)}. \label{eq: lemma 2.24 interim ineq}
    \end{align}
    The idea is now to apply different upper bounds on the remaining summands. In more detail: Since $\bbe X_0 = 0$, Lemma~2.22 of \cite{borovkovaetal} implies 
    \begin{align*}
        \abs{\bbe (X_{0} (X_{i} X_{i+j} X_{i+j+k}))} 
        &\leq 6(\beta_{\floor{\frac{i}{3}}})^{\frac{\delta}{4+\delta}}\norm{X_0}^4_{4+\delta}+8(a_{\floor{\frac{i}{3}}})^{\frac{\delta}{3+\delta}}\norm{X_0}^{\frac{12+3\delta}{3+\delta}}_{4+\delta}, \\
        \abs{\bbe ((X_{0} X_{i} X_{i+j}) X_{i+j+k})}
        &\leq 6(\beta_{\floor{\frac{k}{3}}})^{\frac{\delta}{4+\delta}}\norm{X_0}^4_{4+\delta}+8(a_{\floor{\frac{k}{3}}})^{\frac{\delta}{3+\delta}}\norm{X_0}^{\frac{12+3\delta}{3+\delta}}_{4+\delta}.
    \end{align*}
    Furthermore, application of Lemma~2.22 and Lemma~2.18 of \cite{borovkovaetal}, respectively, yields
    \begin{align*}
        &\abs{\bbe ((X_{0} X_{i}) (X_{i+j} X_{i+j+k}))} - \abs{\bbe(X_0 X_i)} \cdot \abs{\bbe(X_{i+j}X_{i+j+k})} \\
        & \hspace{30mm} \leq 6(\beta_{\floor{\frac{j}{3}}})^{\frac{\delta}{4+\delta}}\norm{X_0}^4_{4+\delta}+8(a_{\floor{\frac{j}{3}}})^{\frac{\delta}{3+\delta}}\norm{X_0}^{\frac{12+3\delta}{3+\delta}}_{4+\delta}
    \end{align*}
    and
    \begin{align*}
        \abs{\bbe(X_0 X_i)} \cdot \abs{\bbe(X_{0}X_{k})} 
        &\leq \brackets{2\norm{X_0}^2_{4+\delta}(\beta_{\floor{\frac{i}{3}}})^{\frac{2+\delta}{4+\delta}} + 4\norm{X_0}^{\frac{4+\delta}{3+\delta}}_{4+\delta} (a_{\floor{\frac{i}{3}}})^{\frac{2+\delta}{3+\delta}}} \\
        &\hspace{15mm} \times \brackets{2\norm{X_0}^2_{4+\delta}(\beta_{\floor{\frac{k}{3}}})^{\frac{2+\delta}{4+\delta}} + 4\norm{X_0}^{\frac{4+\delta}{3+\delta}}_{4+\delta} (a_{\floor{\frac{k}{3}}})^{\frac{2+\delta}{3+\delta}}}.
    \end{align*}
    Hence, we obtain 
    \begin{align*}
        &\sum_{\substack{0 \leq i,k \leq n \\ i < 3d \text{ or } k < 3d}} \abs{\bbe(X_0 X_i)} \cdot \abs{\bbe(X_0 X_k)} \\
        &\hspace{15mm} \leq \sum_{\phantom{i<}0 \leq i,k \leq 3d} \abs{\bbe(X_0 X_i)} \cdot \abs{\bbe(X_0 X_k)} + 2 \cdot \sum_{\substack{0 \leq i < 3d \\ 3d \leq k \leq n}} \abs{\bbe(X_0 X_i)} \cdot \abs{\bbe(X_0 X_k)} \\
        &\hspace{15mm} \leq (3d)^2 C_2^2 + 2 \cdot 3d \cdot C_2 \sum_{k=3d}^n \abs{\bbe(X_0 X_k)} \\
        &\hspace{15mm} \leq 9d^2 C_2^2 + 6d \cdot C_2 \sum_{k=3d}^n \brackets{2\norm{X_0}^2_{4+\delta}(\beta_{\floor{\frac{k}{3}}})^{\frac{2+\delta}{4+\delta}} + 4\norm{X_0}^{\frac{4+\delta}{3+\delta}}_{4+\delta} (a_{\floor{\frac{k}{3}}})^{\frac{2+\delta}{3+\delta}}}
    \end{align*}
    with regard to the last summand appearing in \eqref{eq: lemma 2.24 interim ineq}. (Note that we have used Lemma~2.18 of \cite{borovkovaetal} for the last inequality.) 
    Therefore, there is a constant $C_3 > 0$ such that 
    \begin{align*}
        \bbe S_n^4 
        &\leq (4! \cdot 81d^3 \cdot C_1)n + (4! \cdot 9d^2 C_2^2) n^2
        + C_3 n \Biggl( \sum_{\substack{3d \leq i \leq n \\ 0 \leq j,k \leq i}} \brackets{(a_{\floor{\frac{i}{3}}})^{\frac{\delta}{3+\delta}} + (\beta_{\floor{\frac{i}{3}}})^{\frac{\delta}{4+\delta}}} \\
        &\hspace{15mm} + \sum_{\substack{3d \leq k \leq n \\ 0 \leq i,j \leq k}} \brackets{(a_{\floor{\frac{k}{3}}})^{\frac{\delta}{3+\delta}} + (\beta_{\floor{\frac{k}{3}}})^{\frac{\delta}{4+\delta}}} + \sum_{\substack{3d \leq j \leq n \\ 0 \leq i,k \leq j}} \brackets{(a_{\floor{\frac{j}{3}}})^{\frac{\delta}{3+\delta}} + (\beta_{\floor{\frac{j}{3}}})^{\frac{\delta}{4+\delta}}} \\
        &\hspace{15mm} + n \cdot \sum_{3d \leq i,k \leq n} \brackets{(\beta_{\floor{\frac{i}{3}}})^{\frac{2+\delta}{4+\delta}} + (a_{\floor{\frac{i}{3}}})^{\frac{2+\delta}{3+\delta}}} \brackets{(\beta_{\floor{\frac{k}{3}}})^{\frac{2+\delta}{4+\delta}} + (a_{\floor{\frac{k}{3}}})^{\frac{2+\delta}{3+\delta}}} \\
        &\hspace{15mm} + n \sum_{k=3d}^n \brackets{(\beta_{\floor{\frac{k}{3}}})^{\frac{2+\delta}{4+\delta}} + (a_{\floor{\frac{k}{3}}})^{\frac{2+\delta}{3+\delta}}}\Biggr) \\
        &\leq (4! \cdot 81d^3 \cdot C_1)n 
        + (4! \cdot 9d^2 C_2^2) n^2 
        + C_3 n \Biggl( 3 \sum_{\substack{3d \leq j \leq n \\ 0 \leq i,k \leq j}} \brackets{(a_{\floor{\frac{j}{3}}})^{\frac{\delta}{3+\delta}} + (\beta_{\floor{\frac{j}{3}}})^{\frac{\delta}{4+\delta}}} \\
        &\hspace{15mm} + n \sum_{3d \leq i,k \leq n} \brackets{(\beta_{\floor{\frac{i}{3}}})^{\frac{2+\delta}{4+\delta}} + (a_{\floor{\frac{i}{3}}})^{\frac{2+\delta}{3+\delta}}} \brackets{(\beta_{\floor{\frac{k}{3}}})^{\frac{2+\delta}{4+\delta}} + (a_{\floor{\frac{k}{3}}})^{\frac{2+\delta}{3+\delta}}} \\
        &\hspace{15mm} + n \sum_{3d \leq k \leq n} \brackets{(\beta_{\floor{\frac{k}{3}}})^{\frac{2+\delta}{4+\delta}} + (a_{\floor{\frac{k}{3}}})^{\frac{2+\delta}{3+\delta}}}\Biggr).
    \end{align*}
    Due to the flooring and the summability condition it holds
    \begin{align*}
        \sum_{\substack{3d \leq j \leq n \\ 0 \leq i,k \leq j}} \brackets{(a_{\floor{\frac{j}{3}}})^{\frac{\delta}{3+\delta}} + (\beta_{\floor{\frac{j}{3}}})^{\frac{\delta}{4+\delta}}}
        &= \sum^n_{j=3d} \sum^j_{i,k=0} \brackets{(a_{\floor{\frac{j}{3}}})^{\frac{\delta}{3+\delta}} + (\beta_{\floor{\frac{j}{3}}})^{\frac{\delta}{4+\delta}}} \\
        &= \sum^n_{j=3d} (j+1)^2 \brackets{(a_{\floor{\frac{j}{3}}})^{\frac{\delta}{3+\delta}} + (\beta_{\floor{\frac{j}{3}}})^{\frac{\delta}{4+\delta}}} \\
        &\leq 3 \sum^{n/3}_{j=d} (3j+1)^2 \Bigl(a_j^{\frac{\delta}{3+\delta}} + \beta_j^{\frac{\delta}{4+\delta}}\Bigr) \\
        &\leq 3 \sum^{\infty}_{j=d} 9(j+1)^2 \Bigl(a_j^{\frac{\delta}{3+\delta}} + \beta_j^{\frac{\delta}{4+\delta}}\Bigr) < \infty.
    \end{align*}
    In a similar manner we obtain
    \begin{align*}
        &\sum^n_{i,k=3d} \brackets{(\beta_{\floor{\frac{i}{3}}})^{\frac{2+\delta}{4+\delta}} + (a_{\floor{\frac{i}{3}}})^{\frac{2+\delta}{3+\delta}}} \brackets{(\beta_{\floor{\frac{k}{3}}})^{\frac{2+\delta}{4+\delta}} + (a_{\floor{\frac{k}{3}}})^{\frac{2+\delta}{3+\delta}}} \\
        &\hspace{15mm} = \brackets{\sum^n_{j=3d} (\beta_{\floor{\frac{j}{3}}})^{\frac{2+\delta}{4+\delta}} + (a_{\floor{\frac{j}{3}}})^{\frac{2+\delta}{3+\delta}}}^2 \\
        &\hspace{15mm} \leq 9 \brackets{\sum^{\infty}_{j=d} \beta_j^{\frac{2+\delta}{4+\delta}} + a_j^{\frac{2+\delta}{3+\delta}}}^2 < \infty.
    \end{align*}
    and $\sum_{3d \leq k \leq n} \brackets{(\beta_{\floor{\frac{k}{3}}})^{\frac{2+\delta}{4+\delta}} + (a_{\floor{\frac{k}{3}}})^{\frac{2+\delta}{3+\delta}}} < \infty$, which concludes the proof.
\end{proof}

The central limit theorem for partial sums of functionals of absolutely regular time series given by \cite{borovkovaetal} can be directly extended with regard to our slightly weaker summability condition. The respective proofs are very much the same: First of all, the introduced $(K_n+2L_n,N_n)$-blocking which has to satisfy conditions (3.5)-(3.8) of \cite{borovkovaetal} can be chosen such that $L_n \geq d$. Then, using Lemma \ref{lemma: Lemma 2.23} and \ref{lemma: Lemma 2.24} instead of Lemma 2.23 and 2.24 of \cite{borovkovaetal}, respectively, already yields the desired result:

\begin{theorem}\label{theorem: Theorem 4}
    Let $(X_t)_{t\in\bbn}$ be a 1-approximating functional with approximating constants $(a_k)_{k \in \bbn_0}$ of an absolutely regular time series with mixing coefficients $(\beta_k)_{k\in\bbn_0}$. Furthermore, suppose that $\bbe X_0 = 0$, $\bbe \abs{X_0}^{4+\delta} < \infty$ and
    \begin{equation*}
        \sum^{\infty}_{k=d} k^2 \brackets{a_k^{\frac{\delta}{3+\delta}} + \beta_k^{\frac{\delta}{4+\delta}}} < \infty
    \end{equation*}
    for some $\delta > 0$ and a fixed integer $d \geq 0$. Then, as $n \to \infty$,
    \begin{equation*}
        \frac{1}{\sqrt{n}} \sum^n_{t=1} X_t \xrightarrow{d} N(0, \sigma^2),
    \end{equation*}
    where $\sigma^2 = \bbe X_0^2 + 2 \sum^{\infty}_{k=1} \bbe (X_0X_k)$. In case $\sigma^2=0$, we adopt the convention that $N(0,0)$ denotes the point mass at the origin.
\end{theorem}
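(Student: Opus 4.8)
The plan is to follow the big-block/small-block (Bernstein) scheme underlying Theorem~4 of \cite{borovkovaetal}, substituting our generalized moment bounds (Lemmas~\ref{lemma: Lemma 2.23} and \ref{lemma: Lemma 2.24}) and our version of near regularity (Theorem~\ref{theorem: Theorem 3}) wherever the original argument appeals to summability of \emph{all} approximating and mixing constants. The three ingredients are: a coupling of the dependent data blocks with independent copies sharing the same marginal law; a classical Lindeberg--Feller central limit theorem for the resulting independent block sums; and negligibility estimates for the separation blocks and for the coupling defect.

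First I would decompose $S_n = \sum_{t=1}^n X_t$ along a $(K_n+2L_n, N_n)$-blocking, with long data blocks $B_s$ of length $N_n$ separated by short blocks of length $K_n + 2L_n$. I would fix the block parameters so that the rate conditions (3.5)--(3.8) of \cite{borovkovaetal} hold, together with the extra stipulation $L_n \geq d$. Since those conditions already force $L_n \to \infty$, the requirement $L_n \geq d$ is automatic for all large $n$ and so costs nothing. Theorem~\ref{theorem: Theorem 3} then couples the data blocks $(B_s)$ with independent blocks $(B_s')$ of identical marginal distribution, the coupling defect being controlled by $\beta_{K_n} + 2a_{L_n}$ with $a_{L_n} = (2\sum_{k \geq L_n} a_k)^{1/2}$. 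The assumed summability forces $\sum_{k \geq d} a_k < \infty$, hence the tail $\sum_{k \geq L_n} a_k \to 0$ and the coupling defect vanishes; this is precisely the point at which tail summability from index $d$ replaces full summability.

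Next I would reduce $S_n/\sqrt{n}$ to $\sum_s \tilde{B}_s'/\sqrt{n}$, where $\tilde{B}_s'$ is the sum of the entries of $B_s'$, the errors coming only from the separation blocks and the coupling. For the main term, Lemma~\ref{lemma: Lemma 2.23} applied over a single block shows that the block variances aggregate to $\sigma^2$, while the fourth-moment bound $\bbe S_N^4 \leq C N^2$ from Lemma~\ref{lemma: Lemma 2.24} furnishes the Lyapunov condition for the triangular array of independent block sums (the ratio of summed fourth moments to squared summed variances is of order $N_n/n \to 0$), yielding the $N(0,\sigma^2)$ limit when $\sigma^2 > 0$. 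The second-moment estimate from Lemma~\ref{lemma: Lemma 2.23} simultaneously bounds the separation-block contribution, negligible because the short blocks occupy an asymptotically vanishing fraction of the time line. When $\sigma^2 = 0$, Lemma~\ref{lemma: Lemma 2.23} already gives $\bbe(S_n/\sqrt{n})^2 \to 0$, so $S_n/\sqrt{n} \to 0$ in probability and the limit is the point mass at the origin, matching our convention.

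The one delicate point---hence the main obstacle---is verifying that every appeal to summability in the original proof can indeed be replaced by summability from index $d$ onwards, so that the finitely many possibly ill-behaved early constants never enter the asymptotics. This is exactly what the pairing of $L_n \geq d$ (in the coupling) with the generalized Lemmas~\ref{lemma: Lemma 2.23} and \ref{lemma: Lemma 2.24} (in the variance and Lyapunov steps) secures: because $d$ is fixed and finite while the separation width $L_n \to \infty$, all substitutions go through verbatim. The remaining work is bookkeeping---checking that the rate conditions (3.5)--(3.8) stay compatible with $L_n \geq d$ and that the convergence rates produced by our two lemmas meet the thresholds the Lindeberg--Feller theorem demands of the independent blocks.
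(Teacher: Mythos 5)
Your proposal follows exactly the route the paper takes: it reruns the Bernstein blocking argument of Borovkova et al.\ with the $(K_n+2L_n,N_n)$-blocking chosen so that $L_n \geq d$ (harmless since $L_n \to \infty$), invokes the generalized near-regularity coupling of Theorem~\ref{theorem: Theorem 3}, and substitutes Lemmas~\ref{lemma: Lemma 2.23} and~\ref{lemma: Lemma 2.24} for the original moment bounds in the variance and Lyapunov steps. This matches the paper's (sketched) proof in both structure and all essential details, including the treatment of the degenerate case $\sigma^2 = 0$.
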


\subsection{Law of large numbers and central limit theorem for U-statistics}

Now we want to establish generalizations of the law of large numbers and the central limit theorem of U-statistics of $r$-approximating functionals given by Theorem~6 and Theorem~7 of \cite{borovkovaetal}, respectively.
Utilizing the proof given in that paper though using our Theorem~\ref{theorem: Theorem 3} instead of their Theorem~3, we already obtain the generalization of the law of large numbers:

\begin{theorem}\label{theorem: LLN U-statistics r-approx} \label{theorem: lln u-statistics weaker summability}
    Let $(X_t)_{t\in\bbn}$ be a 1-approximating functional with approximating constants $(a_k)_{k \in \bbn_0}$ of an absolutely regular time series, where $\sum^{\infty}_{k=d} a_k < \infty$ for some integer $d \geq 0$. Furthermore, suppose that $h : \bbr^{2d} \to \bbr^d$ is a measurable and symmetric function which is $1$-continuous, and that the family of random variables $\{h(X_s, X_t) : s,t \geq 1\}$ is uniformly integrable. Then, 
    \begin{equation*}
        U_n(h) = \frac{1}{n(n-1)} \sum_{1 \leq s \neq t \leq n} h(X_s, X_t) \xrightarrow{P} \int_{\bbr^{2d}} h(x, y) dF(x)dF(y) =: \theta
    \end{equation*}
    as $n \to \infty$.
\end{theorem}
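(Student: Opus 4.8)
The plan is to follow the proof of Theorem~6 of \cite{borovkovaetal}, substituting our near-regularity result, Theorem~\ref{theorem: Theorem 3}, for their Theorem~3 at the single point where summability of the approximating constants is invoked. The architecture of that proof is: (i) reduce to a bounded kernel by a truncation argument justified by uniform integrability; (ii) introduce an $(M,N)$-blocking of $\{1,\dots,n\}$ and discard the pairs meeting the separating blocks; (iii) couple the dependent blocks with an independent copy supplied by near regularity and invoke the classical law of large numbers for U-statistics of the resulting i.i.d.\ blocks; and (iv) absorb the coupling error using the 1-continuity of the kernel. Since $h$ is $\bbr^d$-valued and convergence in probability in $\bbr^d$ is componentwise, it suffices to treat $h$ as scalar.

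For step (i), fix $\eta>0$. Uniform integrability of $\{h(X_s,X_t):s\neq t\}$ yields $M>0$ with $\sup_{s\neq t}\bbe\bigl[\abs{h(X_s,X_t)}\indicator{\abs{h(X_s,X_t)}>M}\bigr]<\eta$. Replacing $h$ by its clipping $h_M:=(h\wedge M)\vee(-M)$ — which remains measurable and symmetric and, since clipping is a contraction, is 1-continuous with the same modulus $\phi$ — gives $\bbe\abs{U_n(h)-U_n(h_M)}<\eta$ uniformly in $n$ and $\abs{\theta-\theta_M}\leq\eta$, where $\theta_M=\int\int h_M\,dF\,dF$. Hence it is enough to prove $U_n(h_M)\xrightarrow{P}\theta_M$ for the bounded 1-continuous kernel $h_M$ and then let $\eta\downarrow0$.

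For steps (ii)--(iv), fix $\varepsilon,\delta>0$ and apply Theorem~\ref{theorem: Theorem 3} with separator length $K+2L$ where $L\geq d$, so that only the summable tail $\sum_{k\geq L}a_k$ enters: for every block length $N$ we obtain an independent block sequence $(B'_s)_s$ with $B'_s\overset{D}{=}B_s$ and $\bbp(\norm{B_s-B'_s}\leq\delta)\geq1-\varepsilon$. Viewing the independent blocks as i.i.d.\ observations with the induced symmetric block-kernel, the cross-block U-statistic $U'_n(h_M)$ built from $(B'_s)_s$ converges to $\theta_M$ by the classical U-statistic law of large numbers, once one checks that the within-block pairs and the pairs meeting the separators form a fraction of all pairs that vanishes on letting $n\to\infty$ and then $N/M\to\infty$, so that their bounded contribution is negligible. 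Transferring back to the original blocks is step (iv): I would bound $\bbe\abs{U_n(h_M)-U'_n(h_M)}$ pairwise, and for a cross-block pair $(X_i,X_j)$ the coupling event $\{\norm{B_s-B'_s}\leq\delta\}\cap\{\norm{B_t-B'_t}\leq\delta\}$ forces $\norm{X_i-X'_i}\leq\delta$ and $\norm{X_j-X'_j}\leq\delta$; there 1-continuity — applied with the admissible laws $\bbp_{(X_0,X_{j-i})}$ (dependent case) and $F\times F$ (independent blocks) — bounds the expected increment of $h_M$ by a multiple of $\phi(\delta)$, while on the complementary event, of probability at most $2\varepsilon$ by the per-block coupling bound, $\abs{h_M}\leq M$ yields a crude $\mathcal O(M\varepsilon)$ term. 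Because this is done in expectation pair by pair, no union bound over the growing number of blocks is needed.

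Collecting the estimates, $\limsup_n\bbp(\abs{U_n(h_M)-\theta_M}>\rho)$ is dominated, for each fixed $M$, by a bound that is driven to zero on first letting $N\to\infty$ (removing the separator fraction) and then $\varepsilon,\delta\downarrow0$ (so that $\phi(\delta)\to0$ and $M\varepsilon\to0$); letting finally $\eta\downarrow0$ completes the proof. The main obstacle is step (iv): one must reconcile the \emph{aggregate} $L_1$-closeness of the coupled blocks delivered by near regularity with the \emph{pairwise} kernel control afforded by 1-continuity, and verify that the intermediate joint laws arising in the replacement fall into the two distribution classes admitted by Definition~\ref{definition: p-lipschitz cond}. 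It is precisely here that the weakening of the hypothesis is harmless: the coupling of Theorem~\ref{theorem: Theorem 3} depends only on the tail $\sum_{k\geq L}a_k$ with $L\geq d$, so the finitely many possibly non-summable constants $a_0,\dots,a_{d-1}$ never intervene.
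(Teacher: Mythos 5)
Your proposal is correct and follows essentially the same route as the paper: the paper's entire argument for this theorem is to rerun the proof of Theorem~6 of \cite{borovkovaetal} verbatim, replacing their Theorem~3 by the modified near-regularity statement (Theorem~\ref{theorem: Theorem 3}), whose coupling bound involves only the tail sum $\sum_{k\geq L}a_k$ with $L\geq d$ and is therefore unaffected by the finitely many possibly non-summable constants. Your write-up simply makes explicit the truncation--blocking--coupling architecture that the paper leaves implicit by citation.
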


What remains to show in order to complete our generalized theory on limit theorems for U-statistics of $r$-approximating functionals is a central limit theorem:

\begin{theorem}\label{theorem: CLT U-statistics r-approx} \label{theorem: clt u-statistics weaker summability}
    Let $(X_t)_{t\in\bbn}$ be a 1-approximating functional with approximating constants $(a_k)_{k \in \bbn_0}$ of an absolutely regular time series with mixing coefficients $(\beta_k)_{k \in \bbn_0}$, and let $h$ be a bounded and 1-continuous kernel. Suppose that the sequences $(a_k)_{k \in \bbn_0}$, $(\beta_k)_{k \in \bbn_0}$ and $(\phi(a_k))_{k \in \bbn_0}$ satisfy the summability condition
    \begin{equation}
        \sum^{\infty}_{k=d} k^2 (\beta_k + a_k + \phi(a_k)) < \infty \label{eq: summability cond}
    \end{equation}
    for some fixed integer $d \geq 0$, where $\phi$ denotes the map used in Definition~\ref{definition: p-lipschitz cond}. Then the series
    \begin{equation}
        \sigma^2 = \Var(h_1(X_0)) + 2 \sum^{\infty}_{k=1} \Cov(h_1(X_0), h_1(X_k)) \label{eq: CLT U-statistics r-approx limit variance}
    \end{equation}
    converges absolutely and, as $n \to \infty$,
    \begin{equation*}
        \sqrt{n} (U_n(h) - \theta) \xrightarrow{d} N(0, 4\sigma^2).
    \end{equation*}
\end{theorem}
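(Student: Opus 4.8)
The plan is to follow the proof of Theorem~7 in \cite{borovkovaetal} while systematically replacing each of their auxiliary results by the versions derived above under the relaxed summability hypothesis; in particular, whenever a blocking of the time series is invoked one may choose its parameters so that $L_n \geq d$, after which the generalized Lemma~\ref{lemma: Lemma 2.23}, Lemma~\ref{lemma: Lemma 2.24} and the generalized partial-sum central limit theorem, Theorem~\ref{theorem: Theorem 4}, apply verbatim. The starting point is the Hoeffding decomposition of the kernel. With $\theta = \int_{\bbr^{2d}} h(x,y)\,dF(x)\,dF(y)$ and $h_1(x) = \int h(x,y)\,dF(y)$, set
\begin{equation*}
    h_2(x,y) := h(x,y) - h_1(x) - h_1(y) + \theta,
\end{equation*}
so that $h_2$ is completely degenerate. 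A counting argument then yields
\begin{equation*}
    U_n(h) - \theta = \frac{2}{n}\sum_{t=1}^n \bigl(h_1(X_t) - \theta\bigr) + U_n(h_2),
\end{equation*}
splitting the statistic into a linear projection term and a degenerate remainder.

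For the linear term I would apply Theorem~\ref{theorem: Theorem 4} to the centered sequence $Y_t := h_1(X_t) - \theta$. Its mean is zero by construction, and since $h$ is bounded so is $h_1$, whence $Y_0$ has finite moments of every order. It remains to see that $(Y_t)$ is a $1$-approximating functional with constants obeying the required $k^2$-weighted summability: the assumed $1$-continuity of $h$ (Definition~\ref{definition: p-lipschitz cond}) passes to $h_1$ by Lemma~2.15 of \cite{borovkovaetal}, and Lemma~\ref{lemma: approx + cont} then transfers the approximating property from $(X_t)$ to $(h_1(X_t))$, the transformed constants being controlled through the modulus $\phi$ --- this is exactly why the term $\phi(a_k)$ is included in \eqref{eq: summability cond}. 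Theorem~\ref{theorem: Theorem 4} gives $n^{-1/2}\sum_{t=1}^n Y_t \xrightarrow{d} N(0,\sigma^2)$ with $\sigma^2$ as in \eqref{eq: CLT U-statistics r-approx limit variance}, the absolute convergence of that series coming from Lemma~\ref{lemma: Lemma 2.23}; multiplying by the factor $2$ turns this into the variance $4\sigma^2$ and explains the factor $4$ in the conclusion. When $\sigma^2 = 0$ the limit is read as the point mass at the origin, matching the convention of Theorem~\ref{theorem: Theorem 4}.

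The remaining task, and the principal obstacle, is to show that the degenerate remainder is negligible at the $\sqrt n$-scale, i.e.\ $\sqrt n\, U_n(h_2) \xrightarrow{P} 0$. I would establish this through the second-moment estimate $\bbe\bigl(U_n(h_2)\bigr)^2 = o(1/n)$: expanding the square produces a quadruple sum of covariances $\bbe\bigl(h_2(X_{s_1},X_{t_1})\,h_2(X_{s_2},X_{t_2})\bigr)$, whose leading contributions vanish by the complete degeneracy of $h_2$, and whose surviving terms are controlled by the moment inequalities for $1$-approximating functionals of absolutely regular sequences, again in the form of the generalized Lemmas~\ref{lemma: Lemma 2.23} and~\ref{lemma: Lemma 2.24} together with the $1$-continuity bound governed by $\phi$. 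The delicate point is the combinatorial bookkeeping: unlike the i.i.d.\ case, where degeneracy alone forces the remainder to order $n^{-1}$, here the serial dependence couples all four indices, so one must track which indices are far apart and match each configuration to the appropriate covariance bound --- it is precisely this step that requires strengthening the first-order condition $\sum_k (\beta_k + a_k) < \infty$ used for the law of large numbers to the $k^2$-weighted condition \eqref{eq: summability cond}. Finally, Slutsky's theorem combines the linear and degenerate parts to give the stated convergence.
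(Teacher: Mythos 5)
Your proposal follows essentially the same route as the paper: the Hoeffding decomposition into a linear projection plus a completely degenerate remainder, the generalized partial-sum CLT (Theorem~\ref{theorem: Theorem 4}) applied to the bounded sequence $h_1(X_t)-\theta$ for the linear part (yielding the factor $4$ and the variance \eqref{eq: CLT U-statistics r-approx limit variance}), and a second-moment bound of order $n^{-2}$ for the degenerate part obtained by expanding the quadruple sum and tracking the largest gap between indices. The only substantive deviation is a mis-attribution in the remainder estimate: the surviving covariance terms there are controlled not by the partial-sum moment inequalities of Lemmas~\ref{lemma: Lemma 2.23} and~\ref{lemma: Lemma 2.24} (which concern sums $S_n$ of the functional itself, not U-statistics) but by the covariance inequality for products of bounded $1$-continuous kernels (Lemma~4.3 of the reference), which is precisely where the modulus $\phi$ and the $k^2$-weighted summability enter.
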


The proof is again very similar to the one given by \citet{borovkovaetal}. The main idea is to make use of the Hoeffding decomposition 
\begin{equation*}
\label{eq: Hoeffding decomposition}
    U_n(h) = \theta + \frac{2}{n} \sum^n_{t=1} (h_1(X_t) - \theta) + \frac{2}{n(n-1)} \sum_{1 \leq t_1 < t_2 \leq n} J(X_{t_1}, X_{t_2}),
\end{equation*}
where $\theta = \bbe h(X_1, X_2)$, $h_1(x_1) = \bbe h(x_1, X_2)$ and $J(x_1, x_2) = h(x_1, x_2) - h_1(x_1) - h_1(x_2) + \theta$.
Then we use Theorem~\ref{theorem: Theorem 4} (instead of \citealt[Theorem~4]{borovkovaetal}) to show the convergence of the linear part, that is
\begin{equation*}
    \frac{2}{\sqrt{n}} \sum^n_{t=1} (h_1(X_t)-\theta) \xrightarrow{d} N(0, 4\sigma^2).
\end{equation*}
Let us emphasize at this point that $h_1(X_t)-\theta$ is bounded, since $h$ is a bounded kernel. Hence, the moment assumption in Theorem~\ref{theorem: Theorem 4} is satisfied. We do not use Theorem~\ref{theorem: Theorem 4} on $(X_t)_{t\in \bbn}$ itself, therefore we do not need any additional assumptions on the original time series. For more details on this part of the proof, we refer the reader to \cite[p.~4301]{borovkovaetal}.

Defining 
\begin{equation*}
    R_n := \frac{2}{n(n-1)} \sum_{1 \leq i < j \leq n} J(X_i, X_j)
\end{equation*}
as the degenerate part of the Hoeffding decomposition, it then remains to show that $\sqrt{n} R_n$ is asymptotically negligible under our altered summability condition instead of using \cite[Lemma~4.4]{borovkovaetal}:

\begin{lemma}
    Under the conditions of Theorem \ref{theorem: CLT U-statistics r-approx}, it holds
    \begin{equation}
        \sup_n \bbe \brackets{\frac{1}{n} \sum_{1 \leq i < j \leq n} J(X_i, X_j)}^2 < \infty, \label{eq: lemma 4.4 result}
    \end{equation}
    and therefore, $\sqrt{n} R_n \xrightarrow{P} 0$ as $n \to \infty$.
\end{lemma}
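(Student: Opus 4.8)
The plan is to reduce the asserted convergence $\sqrt{n}R_n \xrightarrow{P} 0$ to the second-moment bound \eqref{eq: lemma 4.4 result} and then to establish that bound through a covariance expansion exploiting the degeneracy of $J$. For the reduction, observe that $\sqrt{n}R_n = \frac{2\sqrt{n}}{n-1}\cdot\frac{1}{n}\sum_{1\le i<j\le n}J(X_i,X_j)$, so that
\[
  \bbe\brackets{\sqrt{n}R_n}^2 = \frac{4n}{(n-1)^2}\,\bbe\brackets{\frac{1}{n}\sum_{1\le i<j\le n}J(X_i,X_j)}^2 .
\]
Once \eqref{eq: lemma 4.4 result} is available, the right-hand side is $O(n^{-1})$, whence $\sqrt{n}R_n \to 0$ in $L^2$ and therefore in probability by Chebyshev's inequality.

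For the second-moment bound, I would expand $\bbe(\sum_{i<j}J(X_i,X_j))^2$ as a double sum $\sum_{i<j}\sum_{k<l}\bbe(J(X_i,X_j)J(X_k,X_l))$ and estimate each summand according to the spacing of the four indices. Two properties of $J$ drive the argument. First, $J$ is bounded, since $h$ is a bounded kernel and hence $h_1$ and $\theta$ are bounded; this yields the crude bound $\abs{\bbe(J(X_i,X_j)J(X_k,X_l))}\le\norm{J}_\infty^2$, which already suffices for the $O(n^2)$ quadruples whose indices cluster within a bounded window. Second, $J$ is degenerate, $\int J(x,y)\,dF(y)=0$, and $h_1$ (hence $J$) inherits the $1$-continuity of $h$ through Lemma~\ref{lemma: approx + cont} together with Lemma~2.15 of \cite{borovkovaetal}. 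Combining degeneracy with the $1$-approximating property and the absolute regularity of the background process lets one decouple an extreme index from the remaining ones whenever the largest gap among the ordered indices is large. Following the case analysis of \cite{borovkovaetal}, I would sort the four indices, distinguish by which gap is largest, replace the relevant $X_t$ by a conditional expectation on a finite window (the error being controlled by $a_{\floor{\cdot}}$ and $\phi(a_{\floor{\cdot}})$), and use the mixing coefficient $\beta_{\floor{\cdot}}$ to pass to near-independence across the largest gap. The resulting covariance bound decays in that gap; since the number of quadruples realizing a given largest gap $m$ is $O(nm^2)$, multiplying and summing gives an off-diagonal contribution of order $n\sum_m m^2(\beta_m+a_m+\phi(a_m))$, which is $O(n)$ by the summability hypothesis.

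The single point at which the weaker hypothesis intervenes is the splitting of the sums at $k=d$, exactly as in the proofs of Lemma~\ref{lemma: Lemma 2.23} and Lemma~\ref{lemma: Lemma 2.24}: the finitely many small-gap contributions (gap below $3d$) are absorbed into the bounded near-diagonal part, of which there are $O(n^2)$ quadruples each bounded by $\norm{J}_\infty^2$, while the decaying covariance estimate and the summability condition $\sum_{k=d}^{\infty}k^2(\beta_k+a_k+\phi(a_k))<\infty$ are invoked only for the remaining large-gap terms. Adding the near-diagonal $O(n^2)$ contribution to the lower-order $O(n)$ off-diagonal one yields $\bbe(\sum_{i<j}J(X_i,X_j))^2\le Cn^2$, which is precisely \eqref{eq: lemma 4.4 result}.

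I expect the main obstacle to be the covariance estimate itself: organizing the case distinction by the largest inter-index gap and verifying, in each configuration, that degeneracy of $J$ can be combined with the approximation (through $\phi$ and $a_k$) and the mixing (through $\beta_k$) to produce a bound decaying fast enough for $\sum_m m^2(\beta_m+a_m+\phi(a_m))$ to converge. The bookkeeping --- in particular keeping track of which of the two $J$-factors contains the decoupled extreme index, and ensuring the replacement errors remain summable after the weighting by $m^2$ --- is the delicate step; everything downstream is routine once the per-configuration decay bound is established.
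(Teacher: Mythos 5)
Your overall strategy coincides with the paper's: reduce $\sqrt{n}R_n\xrightarrow{P}0$ to the uniform second-moment bound via Chebyshev, expand the square into a sum over quadruples, absorb the $O(n^2)$ near-diagonal terms using boundedness of $J$ (via boundedness and $1$-continuity of $h$ and $h_1$), and control the remaining terms by decoupling across large gaps, splitting the summation at $k=d$ exactly as in Lemmas~\ref{lemma: Lemma 2.23} and \ref{lemma: Lemma 2.24} so that the weakened summability condition is only invoked for the tail. The reduction step and the final $O(n^2)$ target are correct.

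The one point where your sketch would not close as written is the claim that, for every configuration of indices $i_1\le i_2\le j_1<j_2$, decoupling across the \emph{largest} gap yields a covariance bound decaying in that gap, so that the off-diagonal contribution is $n\sum_m m^2(\beta_m+a_m+\phi(a_m))=O(n)$. Degeneracy of $J$ kills the decoupled term only when the isolated block consists of a single index (then $\int J(x,\cdot)\,dF(x)=0$ applies, as in \eqref{eq: lemma 4.3 zero}). When the largest gap is the middle one, $\Delta_1=j_1-i_2$, the decoupled quantity is $\bbe_{X_{i_1},X_{i_2}}\bbe_{X_{j_1},X_{j_2}}\bigl(J(X_{i_1},X_{j_1})J(X_{i_2},X_{j_2})\bigr)$, which involves integration against the \emph{joint} laws of the pairs and does not vanish; it is merely bounded by $C_1$ as in \eqref{eq: lemma 4.3 bound 2}. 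The paper therefore applies the decoupling lemma a second time across the second-largest gap $\Delta_2$, obtaining the additive bound \eqref{eq: lemma 4.3 bound 3} with one term decaying in $\Delta_1$ and one in $\Delta_2$; since $\Delta_1$ is then essentially unconstrained, the $\Delta_2$-term is counted $O(n^2\Delta_2)$ times and contributes $O\bigl(n^2\sum_k k(\beta_k+a_k+\phi(a_k))\bigr)=O(n^2)$, not $O(n)$. This still yields \eqref{eq: lemma 4.4 result}, so the lemma survives, but your bookkeeping (a single decoupling and the count $O(nm^2)$ per largest gap $m$) does not cover this configuration and the claimed $O(n)$ off-diagonal bound is not what the argument delivers.
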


In comparison to the analogous result in \cite[Lemma 4.4]{borovkovaetal}, we need to ensure convergence of series for which we cannot find a bound using our summability condition. Nevertheless, our proof is still similar to the one given by the authors.

\begin{proof}
    For $k \in \{i_1, i_2, j_1, j_2\}$, let $\bbe_{X_k} (J(X_{i_1}, X_{j_1}) J(X_{i_2}, X_{j_2}))$ denote the expected value of $J(X_{i_1}, X_{j_1}) J(X_{i_2}, X_{j_2})$ taken with respect to the random variable $X_k$, with the remaining variables kept fixed. (We adopt this notation from \citealp{borovkovaetal}.)
    Since $J(x,y)$ is a degenerate kernel, i.e., $\int J(x,y) dF(x) = 0$ for all $y \in \bbr^d$ where $F$ denotes the cumulative distribution function of the margins $X_t$, it follows
    \begin{equation}
        \bbe_{X_{j_2}} (J(X_{i_1}, X_{j_1}) J(X_{i_2}, X_{j_2})) = \int_{\bbr^d} J(X_{i_1}, X_{j_1}) J(X_{i_2}, y) dF(y) = 0,
        \label{eq: lemma 4.3 zero}
    \end{equation}    
    and similarly for $\bbe_{X_{i_1}}$, $\bbe_{X_{i_2}}$ and $\bbe_{X_{j_1}}$. As both $h$ and $h_1$ are bounded by definition and 1-continuous by \cite[Lemma~2.15]{borovkovaetal}, the same holds for $g(x_1, x_2, x_3, x_4) = J(x_1, x_2) J(x_3, x_4)$ due to \cite[Lemma~2.14]{borovkovaetal}. Hence, there is a constant $C_1 > 0$ such that both $\bbe \brackets{J(X_{i_1}, X_{j_1}) J(X_{i_2}, X_{j_2})}$ and $\bbe_{X_{i_1}, X_{i_2}} \bbe_{X_{j_1}, X_{j_2}} (J(X_{i_1}, X_{j_1}) J(X_{i_2}, X_{j_2}))$ are bounded by it, that is,
    \begin{align}
        \bbe \brackets{J(X_{i_1}, X_{j_1}) J(X_{i_2}, X_{j_2})} &< C_1, \label{eq: lemma 4.3 bound 1} \\
        \bbe_{X_{i_1}, X_{i_2}} \bbe_{X_{j_1}, X_{j_2}} (J(X_{i_1}, X_{j_1}) J(X_{i_2}, X_{j_2})) &< C_1. \label{eq: lemma 4.3 bound 2}
    \end{align}
    
    By linearity of the expected value it holds
    \begin{align}
        \bbe \biggl(\sum_{1 \leq i < j \leq n} J(X_i, X_j)\biggr)^2 
        &= \bbe \Biggl(\sum_{\substack{1 \leq i_1 < j_1 \leq n \\ 1 \leq i_2 < j_2 \leq n}} J(X_{i_1}, X_{j_1}) J(X_{i_2}, X_{j_2})\Biggr) \nonumber \\
        &= \sum_{\substack{1 \leq i_1 < j_1 \leq n \\ 1 \leq i_2 < j_2 \leq n \\ i_1 = i_2 \text{ and } j_1 = j_2}} \bbe \brackets{J(X_{i_1}, X_{j_1}) J(X_{i_2}, X_{j_2})} \nonumber \\
        &\hspace{15mm} + \sum_{\substack{1 \leq i_1 < j_1 \leq n \\ 1 \leq i_2 < j_2 \leq n \\ i_1 \neq i_2 \text{ or } j_1 \neq j_2}} \bbe \brackets{J(X_{i_1}, X_{j_1}) J(X_{i_2}, X_{j_2})}. \label{eq: lemma 4.4 sum}
    \end{align}
   The first sum on the right-hand side has at most $n(n-1)$ summands, so 
    \begin{equation}
        \sum_{\mathclap{\substack{1 \leq i_1 < j_1 \leq n \\ 1 \leq i_2 < j_2 \leq n \\ i_1 = i_2 \text{ and } j_1 = j_2}}} \bbe \brackets{J(X_{i_1}, X_{j_1}) J(X_{i_2}, X_{j_2})} = \sum_{\mathclap{1 \leq i < j \leq n }} \bbe \brackets{J(X_{i}, X_{j}) J(X_{i}, X_{j})} \leq n(n-1) C_1 \leq n^2 C_1. \label{eq: lemma 4.4 first sum}
    \end{equation}
    In the remainder, we consider the sum given by \eqref{eq: lemma 4.4 sum}. First, let us assume that at least one index is different from all the others, e.g., $j_2$, and suppose that $i_1 \leq i_2 \leq j_1 < j_2$. Let $\Delta_i$ denote the $i$-th largest difference between two consecutive indices. In contrast to \cite{borovkovaetal} we need to split the sum according to some values of $\Delta_i$ and find a different bound for it for small values of $\Delta_i$. If $\Delta_1 = j_2 - j_1$, then 
    \begin{align}
        &\sum_{\substack{1 \leq i_1 \leq i_2 \leq j_1 < j_2 \leq n \\ \Delta_1 = j_2-j_1}} \bbe \brackets{J(X_{i_1}, X_{j_1}) J(X_{i_2}, X_{j_2})} \nonumber \\
        &\hspace{5mm} = \sum_{\substack{1 \leq i_1 \leq i_2 \leq j_1 < j_2 \leq n \\ \Delta_1 = j_2-j_1 < 3d}} \bbe \brackets{J(X_{i_1}, X_{j_1}) J(X_{i_2}, X_{j_2})} \ + \sum_{\substack{1 \leq i_1 \leq i_2 \leq j_1 < j_2 \leq n \\ \Delta_1 = j_2-j_1 \geq 3d}} \bbe \brackets{J(X_{i_1}, X_{j_1}) J(X_{i_2}, X_{j_2})}. \label{eq: lemma 4.4 sum cases}
        \end{align}
        With regard to the first sum appearing in \eqref{eq: lemma 4.4 sum cases}, there are $3d - 1$ values possible for $\Delta_1$ and each of them can be obtained less than $n$ times by shifting $j_1$ and $j_2 = j_1 + \Delta_1$. Furthermore, for fixed $j_1$, at most $3d$ choices for $i_2$ are possible by definition of $0 \leq \Delta_3 \leq \Delta_2 \leq \Delta_1$. The same holds true for $i_1$ and fixed $i_2$, which leads to
        \begin{equation*}
            \sum_{\substack{1 \leq i_1 \leq i_2 \leq j_1 < j_2 \leq n \\ \Delta_1 = j_2-j_1 < 3d}} \bbe \brackets{J(X_{i_1}, X_{j_1}) J(X_{i_2}, X_{j_2})} 
            \leq n (3d)^3 C_1. 
        \end{equation*}
        Now we consider second sum appearing in \eqref{eq: lemma 4.4 sum cases}. Eq.~\eqref{eq: lemma 4.3 zero} yields
        \begin{align*}
            \bbe \brackets{J(X_{i_1}, X_{j_1}) J(X_{i_2}, X_{j_2})} 
            &= \bbe \brackets{J(X_{i_1}, X_{j_1}) J(X_{i_2}, X_{j_2})} \\
            &\hspace{15mm} - \bbe_{X_{i_1}, X_{j_1}, X_{i_2}} \bbe_{X_{j_2}} (J(X_{i_1}, X_{j_1}) J(X_{i_2}, X_{j_2})),
        \end{align*}
        so application of Lemma~4.3 of \cite{borovkovaetal} (with $r=\infty$, $s=1$ and $M=C_1$) yields
        \begin{align*}
        &\sum_{\substack{1 \leq i_1 \leq i_2 \leq j_1 < j_2 \leq n \\ \Delta_1 = j_2-j_1 \geq 3d}} \bbe \brackets{J(X_{i_1}, X_{j_1}) J(X_{i_2}, X_{j_2})} \\
        &\hspace{15mm} \leq 4 C_1  \sum_{\substack{1 \leq i_1 \leq i_2 \leq j_1 < j_2 \leq n \\ \Delta_1 = j_2-j_1\geq 3d}} \brackets{\beta_{\floor{\Delta_1/3}} + a_{\floor{\Delta_1/3}} + 2 \phi\brackets{a_{\floor{\Delta_1/3}}}} \\
        &\hspace{15mm} = 4 C_1 \sum_{\substack{1 \leq j_1 < j_2 \leq n \\ \Delta_1 = j_2-j_1\geq 3d}} (\Delta_1+1)^2 \brackets{\beta_{\floor{\Delta_1/3}} + a_{\floor{\Delta_1/3}} + 2 \phi\brackets{a_{\floor{\Delta_1/3}}}}, 
        \end{align*}
        since there are $\Delta_1+1$ possibilities for $i_2$ for each $j_1$ due to $0 \leq j_1 - i_2 \leq \Delta_1$ by assumption and, dependent on the choice of $i_2$, the same holds true for $i_1$, which results in a total of $(\Delta_1+1)^2$ possibilities for $i_1$ and $i_2$ for each $j_1$. Furthermore, for each $\Delta_1 \geq 3d$ there are $n-\Delta_1$ possible choices for pairs $(j_1, j_2)$ such that $1 \leq j_1 < j_2 \leq n$ and $\Delta_1 = j_2 - j_1$, which yields
        \begin{align*}
        & 4 C_1 \sum_{\substack{1 \leq j_1 < j_2 \leq n \\ \Delta_1 = j_2-j_1\geq 3d}} (\Delta_1+1)^2 \brackets{\beta_{\floor{\Delta_1/3}} + a_{\floor{\Delta_1/3}} + 2 \phi\brackets{a_{\floor{\Delta_1/3}}}} \\
        &\hspace{15mm} \leq 4 C_1 n \sum^{n-3d}_{\Delta_1=3d} (\Delta_1+1)^2 \brackets{\beta_{\floor{\Delta_1/3}} + a_{\floor{\Delta_1/3}} + 2 \phi\brackets{a_{\floor{\Delta_1/3}}}} \\
        &\hspace{15mm} \leq 4 C_1 n \sum^{\floor{\frac{n-3d}{3}}}_{k=d} (3k+1)^2 \brackets{\beta_k + a_k + 2 \phi\brackets{a_k}} \\
        &\hspace{15mm} \leq 36 C_1 n \sum^n_{k=d} (k+1)^2 \brackets{\beta_k + a_k + 2 \phi\brackets{a_k}}.
    \end{align*}
    If $\Delta_1 \neq j_2-j_1$, then, depending on whether $\Delta_1$ is located between $i_1$ and $i_2$ or $i_2$ and $j_1$, by applying Lemma~4.3 of \cite{borovkovaetal} twice, we obtain a bound for $\bbe(J(X_{i_1}, X_{j_1}) J(X_{i_2}, X_{j_2}))$. E.g., if $\Delta_1 = j_1-i_2$, then 
    \begin{align}
        & \bbe(J(X_{i_1}, X_{j_1}) J(X_{i_2}, X_{j_2})) \nonumber \\
        &\hspace{15mm} = \bbe(J(X_{i_1}, X_{j_1}) J(X_{i_2}, X_{j_2})) - \bbe_{X_{i_1}, X_{i_2}} \bbe_{X_{j_1}, X_{j_2}} (J(X_{i_1}, X_{j_1}) J(X_{i_2}, X_{j_2})) \nonumber \\
        &\hspace{30mm} + \bbe_{X_{i_1}, X_{i_2}} \bbe_{X_{j_1}, X_{j_2}} (J(X_{i_1}, X_{j_1}) J(X_{i_2}, X_{j_2})) \nonumber \\
        &\hspace{15mm} \leq 4C_1 \bigl(\beta_{\floor{\frac{\Delta_1}{3}}} + a_{\floor{\frac{\Delta_1}{3}}}\bigr) + 2 \phi\bigl(a_{\floor{\frac{\Delta_1}{3}}}\bigr) + 4C_1 \bigl(\beta_{\floor{\frac{\Delta_2}{3}}} + a_{\floor{\frac{\Delta_2}{3}}}\bigr) + 2 \phi\bigl(a_{\floor{\frac{\Delta_2}{3}}}\bigr), \label{eq: lemma 4.3 bound 3}
    \end{align}
    where we have used \eqref{eq: lemma 4.3 zero} as well as \eqref{eq: lemma 4.3 bound 1} and \eqref{eq: lemma 4.3 bound 2}.
    Note that due to Fubini's theorem, we can always change the order of the expected values in Lemma~4.3 of \cite{borovkovaetal} such that we always obtain the result with respect to $\Delta_2$. Furthermore, note that $\Delta_2 \geq 3d$ implies $\Delta_1 \geq 3d$. \\
    Now we consider \eqref{eq: lemma 4.4 sum} again, but for the case $j_2-j_1 \neq \Delta_1$. In this case it holds 
    \begin{align*}
        &\sum_{\substack{1 \leq i_1 \leq i_2 \leq j_1 < j_2 \leq n \\ j_2-j_1 \neq \Delta_1}} \bbe \brackets{J(X_{i_1}, X_{j_1}) J(X_{i_2}, X_{j_2})} \\
        &\hspace{5mm} = \sum_{\substack{1 \leq i_1 \leq i_2 \leq j_1 < j_2 \leq n \\ \Delta_1 \neq j_2-j_1 \\ \Delta_2 < 3d}} \bbe \brackets{J(X_{i_1}, X_{j_1}) J(X_{i_2}, X_{j_2})} \ + \sum_{\substack{1 \leq i_1 \leq i_2 \leq j_1 < j_2 \leq n \\ \Delta_1 \neq j_2-j_1 \\ \Delta_2 \geq 3d}} \bbe \brackets{J(X_{i_1}, X_{j_1}) J(X_{i_2}, X_{j_2})}.
    \end{align*}
    The first sum is bounded by 
    \begin{equation*}
        \sum_{\substack{1 \leq i_1 \leq i_2 \leq j_1 < j_2 \leq n \\ \Delta_1 \neq j_2-j_1 \\ \Delta_2 < 3d}} \bbe \brackets{J(X_{i_1}, X_{j_1}) J(X_{i_2}, X_{j_2})} 
        \leq \sum_{\substack{1 \leq i_1 \leq i_2 \leq j_1 < j_2 \leq n \\ \Delta_1 \neq j_2-j_1 \\ \Delta_2 < 3d}} C_1
        \leq n^2 (3d)^2 C_1,
    \end{equation*}
    as there are $3d$ possible values for $\Delta_2$, which can be each obtained less than $n$ times, and for each $\Delta_2$ there are $3d$ possible choices for $\Delta_3$. For a given $\Delta_2$, we do not have much information on $\Delta_1$, so we multiply with the factor $n$. On the other hand, by application of \eqref{eq: lemma 4.3 bound 3} for the second sum it holds
    \begin{align*}
        &\sum_{\substack{1 \leq i_1 \leq i_2 \leq j_1 < j_2 \leq n \\ \Delta_1 \neq j_2-j_1 \\ \Delta_2 \geq 3d}} \bbe \brackets{J(X_{i_1}, X_{j_1}) J(X_{i_2}, X_{j_2})} \\
        &\hspace{15mm} \leq 4 C_1 \sum_{\substack{1 \leq i_1 \leq i_2 \leq j_1 < j_2 \leq n \\ \Delta_1 \neq j_2-j_1 \\ \Delta_2 \geq 3d}} \Bigl(\beta_{\floor{\frac{\Delta_1}{3}}} + a_{\floor{\frac{\Delta_1}{3}}} +  \phi\bigl(a_{\floor{\frac{\Delta_1}{3}}}\bigr) 
        + \beta_{\floor{\frac{\Delta_2}{3}}} + a_{\floor{\frac{\Delta_2}{3}}} +  \phi\bigl(a_{\floor{\frac{\Delta_2}{3}}}\bigr)\Bigr) \\
        &\hspace{15mm} \leq 4 C_1 \sum^{n-3d-1}_{\Delta_1=3d} \biggl( n \cdot (\Delta_1 + 1)^2 \cdot \Bigl(\beta_{\floor{\frac{\Delta_1}{3}}} + a_{\floor{\frac{\Delta_1}{3}}} +  \phi\bigl(a_{\floor{\frac{\Delta_1}{3}}}\bigr)\Bigr) \\
        &\hspace{45mm} + \sum^{\Delta_1}_{\Delta_2 =3d} n \cdot (\Delta_2+1) \cdot \Bigl(\beta_{\floor{\frac{\Delta_2}{3}}} + a_{\floor{\frac{\Delta_2}{3}}} +  \phi\bigl(a_{\floor{\frac{\Delta_2}{3}}}\bigr)\Bigr)\biggr) \\
        &\hspace{15mm} \leq 4 C_1 \Biggl( \biggl( \sum^{n}_{\Delta_1=3d} n \cdot (\Delta_1 + 1)^2 \cdot \Bigl(\beta_{\floor{\frac{\Delta_1}{3}}} + a_{\floor{\frac{\Delta_1}{3}}} +  \phi\bigl(a_{\floor{\frac{\Delta_1}{3}}}\bigr)\Bigr)\biggr) \\
        &\hspace{45mm} + \biggl(\sum^{n}_{\Delta_2 =3d} n^2 \cdot (\Delta_2+1) \cdot \Bigl(\beta_{\floor{\frac{\Delta_2}{3}}} + a_{\floor{\frac{\Delta_2}{3}}} +  \phi\bigl(a_{\floor{\frac{\Delta_2}{3}}}\bigr)\Bigr)\biggr)\Biggr) \\
        &\hspace{15mm} \leq 8 C_1 n(n+1) \sum^n_{k=d} (3k+1) \cdot (\beta_k + a_k +  \phi(a_k)),
    \end{align*}
    since $\Delta_1 + 1 \leq n+1$ for $3d \leq \Delta_1 \leq n$. Proceeding in an analogous way in the other cases, by our assumed summability condition \eqref{eq: summability cond} we get that 
    \begin{equation*}
        \sum_{\substack{1 \leq i_1 < j_1 \leq n \\ 1 \leq i_2 < j_2 \leq n \\ i_1 \neq i_2 \text{ or } j_1 \neq j_2}} \bbe \brackets{J(X_{i_1}, X_{j_1}) J(X_{i_2}, X_{j_2})} \leq C_2 n^2,
    \end{equation*}
    for some constant $C_2 > 0$. Combining this with \eqref{eq: lemma 4.4 first sum} we directly obtain \eqref{eq: lemma 4.4 result}.
\end{proof}

\end{document}